\newcommand{\ovln}[1]{\overline{#1}}
\newcommand{\angbr}[2]{\langle #1,#2 \rangle}
\newcommand{\freccia}[3]{#2\colon#1 \to #3}
\newcommand{\frecciainj}[3]{\xymatrix{#2 \colon #1  \ar@{^{(}->}[r] &  #3}}
\newcommand{\frecciasopra}[3]{#1\xrightarrow{#2} #3}
\newcommand{\pbmorph}[2]{#1^{\ast}#2} 
\newcommand{\duefreccia}[3]{\xymatrix@C=0.5cm{#2 \colon #1  \ar@{=>}[r] &  #3}}
\newcommand{\duemorfismo}[6]{\xymatrix@+1pc{
#1^{\op} \ar[rrd]^#2_{}="a" \ar[dd]_{#3^{\op}}\\
&& \infsl\\
#5^{\op}  \ar[rru]_#6^{}="b"
\ar_{#4}  "a";"b"}}
\newcommand{\comsquare}[8]{ \xymatrix@+1pc{ 
#1 \ar[r]^{#5} \ar[d]_{#6} & #2 \ar[d]^{#7} \\
#3 \ar[r]_{#8} & #4 
}}
\newcommand{\pullback}[8]{ \xymatrix@+1pc{ 
#1 \pullbackcorner \ar[r]^{#5} \ar[d]_{#6} & #2 \ar[d]^{#7} \\
#3 \ar[r]_{#8} & #4 
}}
\newcommand{\quadratocomm}[8]{ \xymatrix@+1pc{ 
#1 \ar[r]^{#5} \ar[d]_{#6} & #2 \ar[d]^{#7} \\
#3 \ar[r]_{#8} & #4 
}}
\newcommand{\comsquarelargo}[8]{ \xymatrix@+1pc{ 
#1 \ar[rr]^{#5} \ar[d]_{#6} && #2 \ar[d]^{#7} \\
#3 \ar[rr]_{#8} && #4 
}}
\newcommand{\parallelmorphisms}[4]{\xymatrix@+1pc{
#1 \ar @<+4pt>[r]^{#2} \ar @<-4pt>[r]_{#3} & #4
}}
\newcommand{\relation}[4]{\xymatrix@+1pc{
\angbr{#2}{#3}\colon #1 \ar @<+4pt>[r] \ar @<-4pt>[r] & #4
}}
\newcommand{\frecceparalleleopposte}[4]{\xymatrix@+1pc{
#1 \ar@<+4pt>[r]^{#2} \ar@<-4pt>@{<-}[r]_{#3} & #4
}}
\newcommand{\equalizer}[6]{\xymatrix@+1pc{
#1 \ar[r]^{#2} & #3 \ar @<+4pt>[r]^{#4} \ar @<-4pt>[r]_{#5} & #6
}}
\newcommand{\coequalizer}[6]{\xymatrix@+1pc{
 #1 \ar @<+4pt>[r]^{#2} \ar @<-4pt>[r]_{#3} & #4 \ar[r]^{#5} & #6
}}
\newcommand{\sottoggetto}[2]{\xymatrix{
#1 \ar@{>->}[r] & #2
}}
\newcommand{\pullbackcorner}[1][ul]{\save*!/#1+1.2pc/#1:(1,-1)@^{|-}\restore}
\def\pr{\pi}
\def\id{\operatorname{ id}}
\def\op{\operatorname{ op}}
\def\mC{\mathcal{C}}
\def\mD{\mathcal{D}}
\def\mE{\mathcal{E}}
\def\mG{\mathcal{G}}
\def\sT{\mathsf{T}}
\def\Trip{\mathsf{Trip}}
\def\Sub{\mathsf{Sub}}
\def\hey{\mathsf{Hey}}
\def\infsl{\mathsf{InfSl}}
\newcommand{\pca}[1]{\mathbb{#1}}
\def\set{\mathsf{Set}}
\newcommand{\sh}[1]{\mathsf{Sh}(#1)}
\newcommand{\sheaves}[1]{\mathsf{Sh}(#1)}
\newcommand{\presheaves}[1]{\mathsf{E}_{#1}}
\newcommand{\shtopology}[2]{\mathsf{Sh}_{#2}(#1)}
\newcommand{\presh}[1]{\mathsf{E}_{#1}}
\newcommand{\assembly}[1]{\mathpzc{Asm}(#1)}
\newcommand{\parassembly}[1]{\mathpzc{PAsm}(#1)}
\newcommand{\EED}{\mathsf{ExDoc}}
\newcommand{\Excat}{\mathsf{ExCat}}
\newcommand{\regdoctrine}[1]{\mathsf{Reg}(#1)}
\newcommand{\Reg}{\mathsf{RegCat}}
\newcommand{\doctrine}[2]{#2\colon #1^{\op}\longrightarrow\infsl}
\newcommand{\hyperdoctrine}[2]{#2\colon #1^{\op}\longrightarrow\hey}
\newcommand{\exlex}[1]{(#1)_{\mathsf{ex}/\mathsf{lex}}}
\newcommand{\exreg}[1]{(#1)_{\mathsf{ex}/\mathsf{reg}}}
\newcommand{\reglex}[1]{(#1)_{\mathsf{reg}/\mathsf{lex}}}
\newcommand{\loc}{\mathsf{L}}
\newcommand{\supercomp}{\mathsf{S}}
\newcommand{\compexfull}[1]{{#1}^{\exists}}
\newcommand{\powerset}{\mathscr{P}}
\def\fullprDoc{\mathsf{LexDoc}}
\DeclareMathAlphabet{\mathpzc}{OT1}{pzc}{m}{it}
\newtheorem{theorem}{Theorem}[section]
\newtheorem{cor}[theorem]{Corollary}
\newtheorem{lemma}[theorem]{Lemma}
\newtheorem{proposition}[theorem]{Proposition}
\theoremstyle{definition} 
\newtheorem{definition}[theorem]{Definition}
\newtheorem{axiom}[theorem]{Axiom}
\newtheorem{Rule}[theorem]{Rule}
\newtheorem{remark}[theorem]{Remark}
\newtheorem{example}[theorem]{Example}
\begin{document}

\begin{frontmatter}
\title{An Algebraic Abstraction of the Localic Sheafification \\via the Tripos-to-Topos Construction}
\author[1]{M.E. Maietti}
\author[1]{D. Trotta}
\address[1]{University of Padova, Department of Mathematics}

\begin{abstract}
    Localic and realizability toposes are two central classes of toposes in categorical logic, both arising through the Hyland–Johnstone–Pitts tripos-to-topos construction.
    
   We investigate their shared geometric features by providing an algebraic abstraction
   of  the notions of localic presheaves, sheafification and their connection to supercompactification of a locale via an instance of the Comparison Lemma. This can be applied to a broad class of toposes obtained to the tripos-to-topos constructions, including all those generated
   from a  tripos based on the classical category of ZFC-sets.

These results provide a unified geometric framework for understanding localic and realizability toposes.

\end{abstract}
\end{frontmatter}

\tableofcontents
\section{Introduction}
Localic and realizability toposes represent two of the most fundamental classes of toposes in categorical logic, whose key distinction lies in the fact that localic toposes are Grothendieck toposes of sheaves, whereas realizability toposes are not.

In the 1980s, Hyland, Johnstone, and Pitts introduced the notion of \emph{tripos}~\cite{TT} to explain, from an abstract perspective, in what sense Higgs’ description of localic sheaf toposes as $\mathsf{H}$-valued sets~\cite{higgs84} and Hyland’s Effective topos $\mathsf{Eff}$~\cite{HYLAND1982165} are both instances of the same general construction. 

Inspired by the works of Higgs~\cite{higgs84} and of Fourman and Scott~\cite{Fourman79}, they defined a particular family of Lawvere’s hyperdoctrines~\cite{AF,EHCSAF,DACCC}, called triposes, together with the \emph{tripos-to-topos} construction, producing a topos $\sT_{P}$ from a given tripos $\hyperdoctrine{\mC}{P}$.  Both localic and realizability toposes can then be shown to arise as instances of this general construction for suitable triposes.

The main goal of this work is to further investigate the common geometric structures underlying these two classes of toposes from a more geometric and broader categorical perspective.

To this end, it is useful to recall some well-known fundamental features of locales and localic toposes: given a locale $\loc$, applying the construction that freely adds suprema to a meet-semilattice yields a supercoherent locale $D(\loc)$ \cite{BANASCHEWSKI199145}, i.e. the \emph{supercompactification} of $\loc$.
Moreover, by the classical Comparison Lemma, we obtain an equivalence \[ \mathsf{PSh}(\loc) \equiv \sh{D(\loc)}.\]
Summarizing, we have the following situation:

\[\begin{tikzcd}[column sep=large, row sep=large]
	\loc && \sh{\loc} \\
	{D(\loc)} && \sh{D(\loc)}\equiv \mathsf{PSh}(\loc)
	\arrow["{\sh{-}}", maps to, from=1-1, to=1-3]
	\arrow[""{name=0, anchor=center, inner sep=0}, curve={height=-12pt}, hook', from=1-1, to=2-1]
	\arrow[""{name=1, anchor=center, inner sep=0}, curve={height=-12pt}, hook', from=1-3, to=2-3]
	\arrow[""{name=2, anchor=center, inner sep=0}, curve={height=-12pt}, from=2-1, to=1-1]
	\arrow["{\sh{-}}"', maps to, from=2-1, to=2-3]
	\arrow[""{name=3, anchor=center, inner sep=0}, curve={height=-12pt}, from=2-3, to=1-3]
	\arrow["\dashv"{anchor=center}, draw=none, from=2, to=0]
	\arrow["\dashv"{anchor=center}, draw=none, from=3, to=1]
\end{tikzcd}\]

Our goal is to generalize the previous picture to triposes. In particular, we aim to abstract the notions of localic presheaf category,  sheafification, supercompactification of a locale and  the equivalence $\mathsf{PSh}(\loc) \simeq \sh{D(\loc)}$, in a broader context of triposes including also realizability ones.

We start by identifying the categorical counterpart of the localic presheaf category. To this purpose, we recall   that any localic presheaf topos $\mathsf{PSh}(\loc)$ on a locale $\loc$  can be described as the $\mathsf{ex}/\mathsf{lex}$-completion $\exlex{\mG_{\loc^{(-)}}}$, in the sense of Carboni~\cite{REC,SFEC}, of the  category  of points $\mG_{\loc^{(-)}}$ associated to the localic tripos $\hyperdoctrine{\set}{\loc^{(-)}}$. Inspired by this fact,
we generalize the notion of localic presheaf category $\presh{P}$ for a tripos $P$  by identifying it with the exact completion $\presh{P}:=\exlex{\mG_{P}}$ of the category  of points $\mG_P$.

Then, to provide an algebraic rendering of  the localic instance of the Comparison Lemma, we identify the localic tripos of the supercompactification of a locale with the construction known as the \emph{full existential completion}~\cite{MaiettiTrotta21}, originally introduced in~\cite{ECRT}, which freely adds existential quantifiers to a given doctrine.

This observation is motivated by the fact that, for the localic tripos on a locale $\loc$, its full existential completion coincides with the localic tripos associated to the supercompactification $D(\loc)$, as shown in~\cite[Thm.~7.32]{MaiettiTrotta21}. Intuitively, the generalization of the notion of a supercompact element of a locale to the setting of a tripos corresponds to replacing the role of arbitrary joins (not necessarily available in a tripos) with existential quantification (see also~\cite[Sec.~3.2]{maschiotrotta2024}). 
And, hence this kind of generalization differs from the one developed in the context of Grothendieck toposes~\cite{rogers2021,caramello2018,moerdijk2000}, whose development crucially relies on arbitrary disjoint coproducts—structures that are not generally available in a tripos.

Furthermore, there is a crucial difference between  our generalized notion of supercompactification for a tripos and its localic instance: while the supercompactification of a locale is again a locale, the full existential completion of an arbitrary tripos is not, in general, a tripos. Whenever this property holds, we say that the tripos is $\exists$-\emph{supercompactifiable}.

%
Finally, we employ all the above notions to  show an analogue of the localic instance of the Comparison Lemma, stating that 
    $$\presh{P}\equiv \sT_{\compexfull{P}}$$
where $\compexfull{P}$ is the full existential completion of $P$, and $\sT_{\compexfull{P}}$ its tripos to-topos.
This lets us conclude that
the category $\presh{P}$ is a topos if and only if $P$ is $\exists$-supercompactifiable.

Whenever $P$ is $\exists$-supercompactifiable, we also show that the topos $\sT_P$ generated from $P$ is a category of $j_P$-sheaves for a Lawvere–Tierney topology $j_P$ on $\presh{P}$; hence, $\sT_P$ arises as the result of an \emph{abstract localic sheafification}.

The following diagram can summarize the situation:
\[\begin{tikzcd}[column sep=large, row sep=large]
	P && \sT_{P} \\
	{\compexfull{P}} && \sT_{\compexfull{P}}\equiv \presh{P}
	\arrow["{\sT}", maps to, from=1-1, to=1-3]
	\arrow[""{name=0, anchor=center, inner sep=0}, curve={height=-12pt}, hook', from=1-1, to=2-1]
	\arrow[""{name=1, anchor=center, inner sep=0}, curve={height=-12pt}, hook', from=1-3, to=2-3]
	\arrow[""{name=2, anchor=center, inner sep=0}, curve={height=-12pt}, from=2-1, to=1-1]
	\arrow["{\sT}"', maps to, from=2-1, to=2-3]
	\arrow[""{name=3, anchor=center, inner sep=0}, curve={height=-12pt}, from=2-3, to=1-3]
	\arrow["\dashv"{anchor=center}, draw=none, from=2, to=0]
	\arrow["\dashv"{anchor=center}, draw=none, from=3, to=1]
\end{tikzcd}\]
where $\presh{P}:= \exlex{\mG_P}$, and $P$ is a $\exists$-supercompactifiable tripos.


Then we \emph{characterize the class of $\exists$-supercompactifiable triposes} by identifying it with the class of \emph{triposes whose base category has weak dependent products and a generic proof} in the sense of~\cite{MENNI2002187,MENNI2007511}, employing the analysis of the tripos-to-topos construction and exact completions developed in~\cite{TTT,UEC,TECH}.

This characterization plays a crucial role in showing that the class of $\exists$-supercompactifiable triposes includes all triposes over the usual category of sets within Zermelo-Fraenkel set theory with Choice, and hence all realizability triposes, besides localic ones.  Among the main examples belonging to this class we recall: the Modified Realizability tripos \cite{Hylandmodifiedreal,VANOOSTENmodifiedreal}, the Extensional Realizability tripos \cite{VANOOSTENereal}, the Dialectica tripos \cite{Biering2008}, the Krivine tripos \cite{STREICHER_2013}.  As a further significant example of an $\exists$-supercompactifiable tripos that is not $\set$-based, we mention the tripos of extended Weihrauch degrees recently introduced in~\cite{maschitrottao2025}.

Finally, to demonstrate the breadth of the class of $\exists$-supercompactifiable triposes, we prove an analogue of the fundamental theorem of toposes, showing that these triposes are closed under localization. This constitutes a necessary step toward extending our framework to fibrations of toposes, including those formalized within a predicative metalanguage, as in~\cite{MAIETTI_MASCHIO_2021}, where the reference to Lawvere–Tierney sheaves is essential.

\section{Preliminaries on doctrines and triposes}
In this section, we briefly provide some categorical background for analyzing the tripos-to-topos construction via completions of doctrines and their relationship to geometric morphisms between toposes. We begin by recalling the completions of doctrines that we use to prove the main theorems of the paper.

\subsection{Lex primary and full existential doctrines}
The notion of a hyperdoctrine was introduced by F.~W.~Lawvere in a series of seminal papers~\cite{AF,EHCSAF}.
We recall here some definitions that will be useful in what follows.
Further details on the theory of elementary and existential doctrines can be found in~\cite{QCFF,EQC,UEC,TECH,EMMENEGGER2020}.


We indicate with  $\set$  the category of sets formalizable within the classical axiomatic set theory  ZFC.

\begin{definition}\label{def primary doctrine}\label{def lex primary doctrine}
A \textbf{lex primary doctrine}~is a functor
$\doctrine{\mC}{P}$ from the opposite of a category $\mC$ with finite limits to the category of inf-semilattices.
\end{definition}

\begin{definition}\label{def:morphism of primary doctrine}
Let $\doctrine{\mC}{P}$ and $\doctrine{\mC}{R}$ be two lex primary doctrines. A \textbf{lex primary morphism} of doctrines is given by a pair $(F,\mathfrak{b})$ 

\[\begin{tikzcd}
   \mC^{\op} \\
   && \infsl \\
   \mD^{\op}
   \arrow[""{name=0, anchor=center, inner sep=0}, "R"', from=3-1, to=2-3]
   \arrow[""{name=1, anchor=center, inner sep=0}, "P", from=1-1, to=2-3]
   \arrow["F^{\op}"', from=1-1, to=3-1]
   \arrow["\mathfrak{b}"', shorten <=4pt, shorten >=4pt, from=1, to=0]
\end{tikzcd}\]
where
\begin{itemize}
\item $\freccia{\mC}{F}{\mD}$ is a finite limits preserving functor;
\item $\freccia{ P}{\mathfrak{b}}{R\circ F}$ is a natural transformation.
\end{itemize}

\end{definition}
\begin{definition}\label{def:doctrine transformation}
Let us consider two lex primary doctrines $\doctrine{\mC}{P}$ and $\doctrine{\mD}{R}$, and two lex primary morphisms $\freccia{P}{(F,\mathfrak{b}),(G,\mathfrak{c})}{R}$. A \textbf{doctrine transformation} is a natural transformation $\freccia{F}{\theta}{G}$ such that
\[ \mathfrak{b}_A(\alpha)\leq R_{\theta_A}(\mathfrak{c}_A(\alpha))\]
for every $\alpha$ of $P(A)$.
\end{definition}
We denote by $\fullprDoc$ the 2-category of lex primary doctrines, lex primary morphisms al doctrine transformations.

The following example of primary doctrine is introduced in \cite{Hofstra2006}.
\begin{example}\label{ex: primary doctrine pca}
Let $\pca{A}$ be a partial combinatory algebra (pca). We can define a lex primary doctrine $\doctrine{\set}{\pca{A}^{(-)}}$ assigning to a set $X$ the set of $\pca{A}^X$ of functions from $X$ to $\pca{A}$. Given two elements $f,g\in \pca{A}^X$,  we have that $\alpha\leq \beta$ if there exists an element $a\in \pca{A}$ such that for every $x\in X$ we have that $a\cdot \alpha(x)$ is defined and $a\cdot \alpha(x)=\beta(x)$.
\end{example}
\begin{example}\label{ex:lex primary doctrine localic}
    Let $\supercomp$ be a inf-semilattice. We can define a lex primary doctrine $\doctrine{\set}{\supercomp^{(-)}}$ assigning to a set $X$ the set of $\supercomp^X$ of functions from $X$ to $\supercomp$. Given two elements $f,g\in \supercomp^X$,  we have that $\alpha\leq \beta$ if $f(x)\leq g(x)$ for every $x\in X$.
\end{example}

We recall here the notion of \emph{full existential doctrine} from \cite{MaiettiTrotta21}, that is a specific instance of the notion of elementary and existential doctrine used in  \cite{QCFF,EQC}.

\begin{definition}\label{def existential doctrine}
A lex primary doctrine $\doctrine{\mC}{P}$ is \textbf{full existential} if, for every object $A$ and $B$ in $\mC$ for any product projection $\freccia{A}{f}{B}$, the functor
$ \freccia{P(B)}{{P_{f}}}{P(A)}$
has a left adjoint $\exists_{f}$, and these satisfy \emph{Beck-Chevalley condition} and \emph{Frobenius reciprocity}.

\end{definition}

\begin{remark}
    Notice that in a full existential doctrine $\doctrine{\mC}{P}$ we can define an equality predicate $\delta_X:=\exists_{\angbr{\id_X}{\id_X}}(\top_X)$ in $ P(X\times X)$ for every object $X$ of $\mC$.
    
    \end{remark}

\begin{definition}
A lex primary morphism is said \textbf{full existential} whenever the natural transformation $\freccia{P}{\mathfrak{b}}{F\circ R}$ commutes with left adjoints along every arrow.

\end{definition}
We denote by $\EED$ the  2-category of full existential doctrines, full existential morphisms and doctrine transformation.

The following examples are discussed in \cite{AF,UEC}.
\begin{example}[subobjects doctrine]\label{ex:sub doctrine}
 Let $\mC$ be a category with finite limits. The functor $\doctrine{\mC}{{\Sub_{\mC}}}$
assigns to an object $A$ in $\mC$ the poset $\Sub_{\mC}(A)$ of subobjects of $A$ in $\mC$ and, for an arrow $\freccia{B}{f}{A}$ the morphism $\freccia{\Sub_{\mC}(A)}{\Sub_{\mC}(f)}{\Sub_{\mC}(B)}$ is given by pulling a subobject back along $f$.  This is a full existential elementary doctrine if and only if the category $\mC$ is regular. 
\end{example}
\begin{example}[weak-subobjects doctrine]\label{ex:weak sub doctrine}
Let $\mC$ be a category with finite limits. The functor $\doctrine{\mC}{{\Psi_{\mC}}}$ assigns to an object $A$ in $\mC$ the poset reflection of the slice category $\mC/A$, and for an arrow $\freccia{B}{f}{A}$, the homomorphism $\freccia{\Psi_{\mC}(A)}{\Psi_{\mC}(f)}{\Psi_{\mC}(B)}$ is defined via the pullabck. This doctrine is full existential, and the existential left adjoint are given by the post-composition.
\end{example}
The following examples are discussed in  \cite{TT,TTT}.
\begin{example}[localic doctrine]\label{ex:localic tripos}
Let $\loc$ be a locale. The lex primary doctrine $\doctrine{\set}{\loc^{(-)}}$ defined in \Cref{ex:lex primary doctrine localic} is a full existential doctrine.
\end{example}

\begin{example}[realizability doctrine]\label{ex:realizability tripos}
Let $\pca{A}$ be a pca, we can consider the realizability doctrine $\doctrine{\set}{\mathcal{P}}$: for each set $X$, $(\powerset(\pca{A})^X,\leq)$ is defined as the set of functions from $X$ to the powerset $\powerset(\pca{A})$ of $\pca{A}$ and, given two elements $\alpha$ and $\beta$ of $\powerset(\pca{A})^X$, we say that $\alpha\leq \beta$ if there exists an element $\ovln{a}\in \pca{A}$ such that for all $x\in X$ and all $a\in \alpha (x)$, $\ovln{a}\cdot a$ is defined and it is an element of $\beta (x)$. As in \Cref{ex: primary doctrine pca}, this defines a preorder, so we have consider its poset reflection.
\end{example}

\subsection{Hyperdoctrines and triposes}

The notion of \emph{tripos} and the \emph{tripos-to-topos construction} were originally introduced in \cite{TTT} and revisited in \cite{TT} in order to generalize the construction of the category of sheaves of a locale. Over the past few years, there has been an increasing focus on its universal properties in
\cite{UEC,TECH,FREY2015232}, which we will exploit in our work.
In particular, we will make use of the fact, shown in \cite{UEC}, that the tripos-to-topos
construction coincides with the {\it exact completion} of a tripos viewed as {\it an elementary existential doctrine}.

Furthermore,  in \cite{TT} it was  also introduced a notion of geometric morphism between arbitrary triposes, extending a more restrictive one in \cite{TTT} , which
induces  a geometric morphism between their generated  toposes.  This also specializes to the notion of geometric embedding from a tripos $\sT_P$ to a tripos $\sT_R$, which  extends to a geometric embedding between the corresponding toposes, to view the topos
generated by $\sT_P$ as a topos of internal sheaves  on the topos generated by $\sT_R$ for the Lawvere-Tierney topology  induced by the geometric embedding, see e.g.  \cite{SGL}.
We start recalling the main concepts regarding first-order hyperdoctrines and triposes, mainly following the notation used in \cite{TTT}, and the definition of pre-equipment $\Trip$ of triposes from \cite{FREY2015232}.
\begin{definition}\label{def:hyperdoctrine}
A \textbf{full first order hyperdoctrine} is a full existential doctrine $\doctrine{\mC}{P}$  such that
\begin{itemize}
\item for every object $A$ of $\mC$ the fibre $P(A)$ is a Heyting algebra, and for every arrow $\freccia{A}{f}{B}$ of $\mC$, $\freccia{P(B)}{P_f}{P(A)}$ is a morphism of Heyting algebras;
\item for any arrow $\freccia{ A}{f}{B}$, the functor
\[ \freccia{P(B)}{{P_{f}}}{P(A)}\]
has both a left adjoint $\exists_{f}$ and a right adjoint $\forall_{f}$. Moreover, these adjoints have to satisfy \emph{Beck-Chevalley condition}.
\end{itemize}

\end{definition}

\begin{definition}
A lex primary doctrine $\doctrine{\mC}{P}$ has a
\textbf{weak predicate classifier} if there exists an object $\Omega$ of $\mC$ together with an element $\in$ of $P(\Omega)$ such that for every object $A$ of $\mC$ and every $\alpha$ of $P(A)$ there exists a morphism $\freccia{A}{\{\alpha\}}{\Omega}$ such that $P_{ \{\alpha\}}(\in)=\alpha$.
\end{definition}
\begin{definition}
A lex primary doctrine $\doctrine{\mC}{P}$ has
\textbf{weak power objects} if for every object $X$ of $\mC$ there exists an object $\mathrm{P}X$ and an element $\in_X$ of $P(X\times\mathrm{P}X)$ such that for every $\beta$ of $P(X\times Y)$ there exists an arrow $\freccia{Y}{\{\alpha\}_X}{\mathrm{P}X}$ such that $\beta=P_{\id_X\times \{\alpha\}_X}(\in_X)$.

\end{definition}
Notice that if $\doctrine{\mC}{P}$ has weak power objects, then it has a weak predicate classifier given by $\mathrm{P}1$ and $\in_1$.
The vice versa holds when the base category is cartesian closed \cite{TTT}, and this extends also to the weakly case:
\begin{lemma}\label{rem:weak cart closed power ob iff pred clas}
If $\doctrine{\mC}{P}$ has a weak predicate classifier and the base category is weakly cartesian closed then, for every object $A$ of $\mC$, we can define an object $\mathrm{P}A:=\Omega^A$ and an element $\in_A:=P_{\mathsf{ev}}(\in)$ of $P(A\times \mathrm{P}A)$
where $\freccia{A\times \Omega^A}{\mathsf{ev}}{\Omega}$ is the evaluation arrow, and these assignments give to $P$ the structure of weak power objects.
\end{lemma}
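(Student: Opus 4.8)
The plan is to reduce the construction of weak power objects to the interplay between the weak predicate classifier and weak exponentiation, via the standard transposition trick. Fix an object $A$ and set, as in the statement, $\mathrm{P}A := \Omega^A$ and $\in_A := P_{\mathsf{ev}}(\in)\in P(A\times\mathrm{P}A)$, where $\freccia{A\times\Omega^A}{\mathsf{ev}}{\Omega}$ is the weak evaluation provided by weak cartesian closedness. What must be shown is that for every object $Y$ and every $\beta\in P(A\times Y)$ there is a morphism $\freccia{Y}{\{\beta\}_A}{\mathrm{P}A}$ satisfying $\beta = P_{\id_A\times\{\beta\}_A}(\in_A)$, i.e.\ the defining property of weak power objects.

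First I would invoke the weak predicate classifier on the object $A\times Y$ and the element $\beta\in P(A\times Y)$: this yields a classifying morphism $\freccia{A\times Y}{\{\beta\}}{\Omega}$ with $P_{\{\beta\}}(\in)=\beta$. Next I would transpose $\{\beta\}$ across the weak exponential $\Omega^A$. Since $\mC$ is weakly cartesian closed, there exists a morphism $\freccia{Y}{\lambda}{\Omega^A}$ such that $\mathsf{ev}\circ(\id_A\times\lambda)=\{\beta\}$. I then simply declare $\{\beta\}_A:=\lambda$, which is the candidate map into $\mathrm{P}A=\Omega^A$.

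The remaining verification is a direct computation using the contravariant functoriality of $P$, namely $P_{g\circ f}=P_f\circ P_g$. Applying this to the equation $\mathsf{ev}\circ(\id_A\times\{\beta\}_A)=\{\beta\}$ gives
\[
P_{\id_A\times\{\beta\}_A}(\in_A)=P_{\id_A\times\{\beta\}_A}\bigl(P_{\mathsf{ev}}(\in)\bigr)=P_{\mathsf{ev}\circ(\id_A\times\{\beta\}_A)}(\in)=P_{\{\beta\}}(\in)=\beta,
\]
which is exactly what is required; carrying this out for every $A$ equips $P$ with weak power objects.

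I do not expect a genuine obstacle here. The only point demanding care is that both the weak predicate classifier and the weak exponential supply their structural maps without any uniqueness clause; this is harmless, and is in fact precisely why the argument closes, since the notion of weak power objects likewise asks only for the existence of $\{\beta\}_A$. The one technical subtlety is bookkeeping the variance of $P$: because $P$ is defined on $\mC^{\op}$, the reindexings must be composed as $P_{\id_A\times\{\beta\}_A}\circ P_{\mathsf{ev}}=P_{\mathsf{ev}\circ(\id_A\times\{\beta\}_A)}$, so that the composite matches the weak evaluation equation rather than its opposite.
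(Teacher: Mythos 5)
Your proof is correct and is exactly the argument the paper intends: the lemma is stated without an explicit proof (the construction $\mathrm{P}A:=\Omega^A$, $\in_A:=P_{\mathsf{ev}}(\in)$ in the statement already encodes it, as an extension of the cartesian-closed case cited from the tripos literature), and your classify-then-transpose step together with the contravariant functoriality computation $P_{\id_A\times\{\beta\}_A}(P_{\mathsf{ev}}(\in))=P_{\mathsf{ev}\circ(\id_A\times\{\beta\}_A)}(\in)=\beta$ is precisely the omitted verification. Your closing remark is also apt: the absence of uniqueness clauses in both the weak classifier and the weak exponential is harmless because weak power objects demand only existence.
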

\begin{definition}\label{def:tripos}
A full first-order hyperdoctrine $\doctrine{\mC}{P}$ is called \textbf{full tripos} if it has  weak power objects. 
\end{definition}

\begin{example}
The localic doctrine $\doctrine{\set}{\loc^{(-)}}$ defined in \Cref{ex:localic tripos} is a full tripos, usually called localic tripos. The weak predicate classifier given by $\Omega:=\loc$ and $\in:=\id_{\loc}$.
\end{example}
\begin{example}
The realizability doctrine $\doctrine{\set}{\mathcal{P}}$  defined in \Cref{ex:realizability tripos} is a full tripos. The weak predicate classifier given by $\Omega:=\pca{A}$ and $\in:=\id_{\pca{A}}$.
\end{example}
\begin{example}\label{ex:subobject on topos}
If $\mE$ is a topos, then the functor $\doctrine{\mE}{\Sub}$, as defined in \Cref{ex:sub doctrine}, is a tripos.
\end{example}
\begin{example}\label{ex:slice of a tripos is a tripos}
    If $\doctrine{\mC}{P}$ is a lex primary doctrine with a weak predicate classifier given by an object $\Omega$ and $\in$ element of $P(\Omega)$ then the slice doctrine $\doctrine{\mC/X}{P_{/X}}$ as defined in \Cref{ex:slice doctrine} ha a weak predicate classifier, given by the object $\freccia{\Omega\times X}{\pr_X}{X}$ and the element $P_{\pr_{\Omega}}(\in)$. Similarly, if $P$ is a full tripos then $P_{/X}$ is a full tripos.
\end{example} 

To guarantee that a doctrine of weak subobjects of a finite limit category is a tripos 
it is enough to check that it  has weak dependent products  in the sense of 
\cite{BirkedalCarboniRosolini}
and generic proof in the sense of \cite{MENNI2003287}. We recall the main definitions and briefly outline the key structures of this functor.
\begin{definition}\label{def: weak dependent products}
	A \textbf{weak dependent product} of an arrow $\freccia{X}{f}{J}$ of $\mC$ along a map $\freccia{J}{g}{I}$ consists of a commutative diagram

\[\begin{tikzcd}
	X & E & Z \\
	& J & I
	\arrow["e"', from=1-2, to=1-1]
	\arrow["f"', from=1-1, to=2-2]
	\arrow["g"', from=2-2, to=2-3]
	\arrow["h", from=1-3, to=2-3]
	\arrow["",from=1-2, to=2-2]
	\arrow[from=1-2, to=1-3]
	\arrow["\scalebox{1.6}{$\lrcorner$}"{anchor=center, pos=0.1}, shift left=3, draw=none, from=1-2, to=2-2]
\end{tikzcd}\]
	such that it has the universal property of being weakly terminal in the category of diagrams with such a shape, namely for every other diagram 
    \[\begin{tikzcd}
	X & E' & Z' \\
	& J & I
	\arrow["e'"', from=1-2, to=1-1]
	\arrow["f"', from=1-1, to=2-2]
	\arrow["g"', from=2-2, to=2-3]
	\arrow["h'", from=1-3, to=2-3]
	\arrow["",from=1-2, to=2-2]
	\arrow[from=1-2, to=1-3]
	\arrow["\scalebox{1.6}{$\lrcorner$}"{anchor=center, pos=0.1}, shift left=3, draw=none, from=1-2, to=2-2]
\end{tikzcd}\]
there are arrows $w: Z'\rightarrow Z$  $k: E'\rightarrow E$ such that $e'=ek$ and $h'=hw$.
\end{definition}

    \begin{remark}\label{rem:weak dep prod implies slice weak dep prod}
    We recall from \cite{EMMENEGGER} that if a category has finite limits and weak dependent then every slice category has weak dependent products.
\end{remark}

It is a known fact that if a category $\mC$ has weak dependent products, then the doctrine $\doctrine{\mC}{\Psi_{\mC}}$ is universal and implicational. The proof is straightforward, and the main idea is that we can define the universal quantifier $\freccia{Z}{\forall_g(f)}{I}$ as the weak dependent product of $f$ along $g$
%
and then define the implication as
$f\to g:= \forall_f \Psi_f (g)$.

Now we recall from \cite{MENNI2007511,MENNI2002187} the notion of \emph{generic proof}.
	
	\begin{definition}
	A \textbf{generic proof} is a morphism $\freccia{\Theta}{\theta}{\Lambda}$ such that for every arrow $\freccia{Y}{f}{X}$ there exists a map $\freccia{X}{\upsilon_f}{\Lambda}$ such that $f$ factors through $\upsilon_f^*\theta$ and $\upsilon_f^*\theta$  factors through $f$.
	\[\begin{tikzcd}
		Y & E  & \Theta \\
		& X & \Lambda
		\arrow["{\upsilon_f}"', from=2-2, to=2-3]
		\arrow["", from=1-2, to=2-2]
		\arrow["\theta", from=1-3, to=2-3]
		\arrow[from=1-2, to=1-3]
		\arrow["f"', from=1-1, to=2-2]
		\arrow["\scalebox{1.6}{$\lrcorner$}"{anchor=center, pos=0.1}, shift left=3, draw=none, from=1-2, to=2-2]
		\arrow["{e_1}"', shift right=1, from=1-1, to=1-2]
		\arrow["{e_2}"', shift right=1, from=1-2, to=1-1]
	\end{tikzcd}\]
\end{definition}
It is immediate to check that the doctrine $\doctrine{\mC}{\Psi_{\mC}}$ has a weak predicate classifier if and only if $\mC$ has a generic proof. 

\begin{remark}\label{rem:generic proof in slices}
    Notice that if $\mC$ has a generic proof  $\freccia{\Theta}{\theta}{\Lambda}$, it is straightforward to check that every slice category $\mC/X$ has a generic proof  given by 
\[\begin{tikzcd}
	{X\times \Theta} && {X\times \Lambda} \\
	& X
	\arrow["{\id_X\times \theta}", from=1-1, to=1-3]
	\arrow["{\pi_X}"', from=1-1, to=2-2]
	\arrow["{\pi_X}", from=1-3, to=2-2]
\end{tikzcd}\]
\end{remark}

\subsection{Tripos-to-topos construction}\label{sec:T2T}

Now we recall the tripos-to-topos construction \cite{TTT} in the general context of full existential doctrines \cite{UEC,TECH}.

\bigskip
\noindent
\textbf{Tripos-to-topos.} Given a full existential doctrine $\doctrine{\mC}{P}$, the category $\sT_P$ consists of:
 
\medskip
\noindent 
\textbf{objects:} pairs $(A,\rho)$ such that $\rho \in P(A\times A)$  satisfies

\begin{itemize}

\item \emph{symmetry:} $\rho\leq P_{\angbr{\pr_2}{\pr_1}}(\rho)$;
\item \emph{transitivity:} $P_{\angbr{\pr_1}{\pr_2}}(\rho)\wedge P_{\angbr{\pr_2}{\pr_3}}(\rho)\leq P_{\angbr{\pr_1}{\pr_3}}(\rho)$, where $\pr_i$ are the projections from $A\times A\times A$;
\end{itemize} 
\textbf{arrows:} $\freccia{(A,\rho)}{\phi}{(B,\sigma)}$ are objects $\phi\in P(A\times B)$ such that
\begin{enumerate}[label=(\roman*)]
\item $\phi\leq P_{\angbr{\pr_1}{\pr_1}}(\rho)\wedge P_{\angbr{\pr_2}{\pr_2}}(\sigma)$;
\item $P_{\angbr{\pr_1}{\pr_2}}(\rho)\wedge P_{\angbr{\pr_1}{\pr_3}}(\phi)\leq P_{\angbr{\pr_2}{\pr_3}}(\phi)$ where $\pr_i$ are projections from the object $A\times A\times B$; 
\item $P_{\angbr{\pr_2}{\pr_3}}(\sigma)\wedge P_{\angbr{\pr_1}{\pr_2}}(\phi)\leq P_{\angbr{\pr_1}{\pr_3}}(\phi)$ where $\pr_i$ are projections from the object $A\times B\times B$; 
\item $P_{\angbr{\pr_1}{\pr_2}}(\phi)\wedge P_{\angbr{\pr_1}{\pr_3}}(\phi)\leq P_{\angbr{\pr_2}{\pr_3}}(\sigma)$ where $\pr_i$ are projections from the object$A\times B\times B$; 
\item $P_{\Delta_A}(\rho)\leq \exists_{\pr_1}(\phi)$ where $\pr_1$ is the first projection from $A\times B$.
\end{enumerate}
\medskip
\noindent
The following result is presented in \cite{UEC}:

\begin{theorem}
	Let $\doctrine{\mC}{P}$ be a full existential doctrine. Then $\sT_P$ is an exact category.
\end{theorem}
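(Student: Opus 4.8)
The plan is to verify directly the three defining properties of a Barr-exact category: that $\sT_P$ has finite limits, that it is regular, and that every equivalence relation in it is effective. Throughout, the fact we exploit is that, being full existential, $P$ carries the equality predicates $\delta_X=\exists_{\angbr{\id_X}{\id_X}}(\top_X)$ of the preceding remark; this makes $P$ an elementary existential doctrine, so that we may use the internal calculus of partial equivalence relations and functional relations, together with Frobenius reciprocity and the Beck--Chevalley condition. This recovers the identification of $\sT_P$ with the exact completion of $P$ given in \cite{UEC}.

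\emph{Finite limits.} First I would exhibit a terminal object as $(1,\delta_1)$, where $1$ is terminal in $\mC$; the unique arrow out of any $(A,\rho)$ is the total functional relation obtained from $\top$ by $\exists$-projection. Next I would build binary products: the product of $(A,\rho)$ and $(B,\sigma)$ has carrier $A\times B$ and partial equivalence relation obtained by conjoining the reindexings of $\rho$ and $\sigma$ along the evident projections out of $(A\times B)\times(A\times B)$, the two projections being the obvious functional relations. Finally I would construct equalizers of a parallel pair $\phi,\psi\colon(A,\rho)\to(B,\sigma)$ by restricting $\rho$ through conjunction with the predicate on $A$, expressed by $\exists$ over $B$ together with $\delta_B$, asserting that $\phi$ and $\psi$ send a point to $\sigma$-equal targets. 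Each verification is a routine check of conditions (i)--(v) against the doctrine axioms, and products together with equalizers give all finite limits.

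\emph{Regularity.} Given an arrow $\phi\colon(A,\rho)\to(B,\sigma)$, I would factor it as a regular epimorphism followed by a monomorphism. The image is the subobject of $(B,\sigma)$ whose partial equivalence relation $\sigma_\phi\in P(B\times B)$ relates $b,b'$ precisely when there exist $a,a'$ with $\phi(a,b)$, $\rho(a,a')$ and $\phi(a',b')$, rendered by $\exists$ along the projections away from the two copies of $A$; the arrow conditions guarantee $\sigma_\phi\leq\sigma$. The epimorphism is carried by $\phi$ itself and the monomorphism is the inclusion $(B,\sigma_\phi)\hookrightarrow(B,\sigma)$. Here monomorphisms are exactly the internally injective functional relations, and regular epimorphisms are the internally surjective ones, i.e.\ those for which the support $P_{\Delta_B}(\sigma)$ is bounded above by $\exists_{\pr_2}(\phi)$, with $\pr_2$ the projection onto $B$. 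The decisive point is pullback-stability of regular epimorphisms: one must show that pulling an internally surjective relation back along an arbitrary arrow keeps it internally surjective. \emph{This I expect to be the main obstacle}, since it is the only step that genuinely uses the interaction of the existential quantifiers with substitution and meet---precisely where the Beck--Chevalley condition and Frobenius reciprocity are indispensable---rather than mere bookkeeping.

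\emph{Effective equivalence relations.} Finally I would observe that an equivalence relation on $(A,\rho)$ is represented by an element $\theta\in P(A\times A)$ that is symmetric, transitive, reflexive on the support of $\rho$, and satisfies $\rho\leq\theta$; its quotient is then obtained simply by coarsening the partial equivalence relation, namely the object $(A,\theta)$, with quotient map the functional relation carried by $\theta$. One checks that $(A,\theta)$ is the coequalizer of the two projections of the relation and that the relation is recovered as its kernel pair, once more by manipulation of $\exists$ and $\delta$. This step is essentially tautological, and is exactly what makes the construction exact: quotients cost nothing, because they amount to replacing a partial equivalence relation by a coarser one. Combining the three parts yields that $\sT_P$ is exact.
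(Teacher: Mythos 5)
Your proposal is correct, but it takes a genuinely different route from the paper. The paper does not verify exactness by hand: it quotes the result from \cite{UEC}, where the argument is modular --- one first forms the regular completion $\regdoctrine{P}$ of the (elementary) full existential doctrine, which is shown to be a regular category via the biadjunction of \Cref{thm: regular completion full ex doctrine}, and then applies Freyd's $\exreg{-}$ completion of a regular category, yielding $\sT_P\equiv \exreg{\regdoctrine{P}}$ (\Cref{teorema T_P=reg(P)_exreg}); exactness of $\sT_P$ is then inherited from known properties of the two completions. You instead carry out the classical direct verification in the style of the original tripos-to-topos papers: finite limits, image factorization with monos as injective and regular epis as surjective functional relations, and effectivity of equivalence relations by coarsening the partial equivalence relation, all using only $\top$, $\wedge$, $\exists$, Frobenius and Beck--Chevalley --- which is the right inventory, since a full existential doctrine need not have implication or universal quantification. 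What the paper's route buys is brevity and the universal property (the $\exreg{\regdoctrine{-}}$ presentation is exactly what powers \Cref{thm:caratterizzazione regular comp existential completion} and \Cref{thm:characterisation tripostotopos full ex comp} later); what yours buys is a self-contained, elementary proof. The one step you flag as ``the main obstacle,'' pullback-stability of regular epis, does in fact go through exactly as you anticipate and is not a gap: the pullback of $\phi\colon(A,\rho)\twoheadrightarrow(B,\sigma)$ along $\psi\colon(C,\tau)\to(B,\sigma)$ is carried by $A\times C$ with relation conjoining $\rho$, $\tau$ and the predicate $\exists_{b}(\phi(a,b)\wedge\psi(c,b))$, and surjectivity of the projection onto $(C,\tau)$ follows by combining totality of $\psi$ with surjectivity of $\phi$, using Frobenius to move the conjunction under the existentials and Beck--Chevalley to commute the relevant reindexings --- a two-line calculation once set up. Two small points to tighten in a full write-up: your equalizer predicate should be stated as $\exists_{\pr_A}(\phi\wedge\psi)$ conjoined into $\rho$ on both coordinates (functionality of $\phi,\psi$ then makes this equivalent to your ``$\sigma$-equal targets'' phrasing), and in the effectivity step you should record that every subobject of $(A,\rho)\times(A,\rho)$ is represented by a strict, $\rho$-extensional predicate $\theta$ --- this uses the mono characterization from your regularity step, so the three parts are not independent.
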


The following result is presented in \cite[Thm. 3.4]{TTT}:
\begin{theorem}
	Let $\doctrine{\mC}{P}$ be a full first order hyperdoctrine. Then $\sT_P$ is a logos.
\end{theorem}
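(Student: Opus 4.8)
The plan is to upgrade the structure already available on $\sT_P$ rather than to reconstruct it from scratch. By the previous theorem the category $\sT_P$ is exact, hence in particular regular, so its subobjects doctrine $\doctrine{\sT_P}{\Sub_{\sT_P}}$ is an elementary existential doctrine: finite meets, images $\exists_f$, Frobenius, Beck--Chevalley and equality are all granted by regularity. To prove that $\sT_P$ is a logos it therefore remains to show that $\Sub_{\sT_P}$ is a \emph{full first order hyperdoctrine} in the sense of \Cref{def:hyperdoctrine}, i.e. that each fibre $\Sub_{\sT_P}(X)$ is a Heyting algebra, that reindexing is a morphism of Heyting algebras, and that every reindexing map admits a right adjoint $\forall_f$ satisfying Beck--Chevalley. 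The idea is to compute these data fibrewise inside $P$, exploiting the hypothesis that $P$ is a full first order hyperdoctrine.

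The first step is the standard description of subobjects in the exact completion. For an object $(A,\rho)$ of $\sT_P$, the poset $\Sub_{\sT_P}(A,\rho)$ is isomorphic to the poset $\mathcal{M}_\rho\subseteq P(A)$ of predicates $\phi\in P(A)$ that are \emph{strict}, $\phi\leq P_{\Delta_A}(\rho)$, and \emph{$\rho$-extensional}, $P_{\pr_1}(\phi)\wedge\rho\leq P_{\pr_2}(\phi)$ in $P(A\times A)$, ordered by the order of $P(A)$; the monomorphism classified by $\phi$ is the evident inclusion of the sub-partial-equivalence relation it determines, and composition and pullback are computed by the usual relational calculus using $\wedge$ and $\exists$ in $P$. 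This identification is part of the analysis of $\sT_P$ as the exact completion of $P$ from \cite{UEC}, and I would simply invoke it.

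Next I would transport the Heyting structure of the fibres of $P$ to $\mathcal{M}_\rho$. Since $P(A)$ is a Heyting algebra, one checks that $\mathcal{M}_\rho$ is closed under the binary meet and join of $P(A)$ and contains $\bot$: strictness is immediate, and $\rho$-extensionality is preserved because $P(A)$ is distributive. The top element of $\mathcal{M}_\rho$ is $P_{\Delta_A}(\rho)$ (extensional by symmetry and transitivity of $\rho$), and the Heyting implication is given by $\phi\Rightarrow_\rho\psi:=(\phi\Rightarrow\psi)\wedge P_{\Delta_A}(\rho)$, where $\Rightarrow$ is the implication of $P(A)$; a short calculation using the extensionality of $\phi$ and $\psi$ shows that this element is again strict and extensional and has the required universal property. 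Because each $P_f$ is a morphism of Heyting algebras and reindexing in $\Sub_{\sT_P}$ is computed through $\wedge$ and $\exists$ in $P$ (with Frobenius and Beck--Chevalley supplied by the full existential structure), these operations are stable under reindexing; hence each fibre of $\Sub_{\sT_P}$ is a Heyting algebra and each reindexing map is a Heyting morphism.

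It remains to produce the right adjoints $\forall_f$, and this is the step I expect to be the main obstacle, since morphisms of $\sT_P$ are functional relations rather than genuine maps. The strategy is the familiar reduction to product projections: factor an arrow $\freccia{(A,\rho)}{f}{(B,\sigma)}$ through its graph as a monomorphism followed by a projection $\freccia{(A,\rho)\times(B,\sigma)}{\pr}{(B,\sigma)}$; universal quantification along the mono is available in any Heyting category from the implication just constructed, while along the projection I would define $\forall_{\pr}$ directly from the universal quantifier $\forall$ of the hyperdoctrine $P$ along the corresponding product projection in $\mC$, correcting it by the strictness and extensionality predicates so that the result lands in $\mathcal{M}_\sigma$. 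Verifying the adjunction $\pr^{*}\dashv\forall_{\pr}$ and, crucially, the Beck--Chevalley condition for these right adjoints is where the genuine content lies: after unwinding the relational description of pullbacks in $\sT_P$, it reduces to the Beck--Chevalley condition for $\forall$ in $P$ together with that for $\exists$. Once this is checked, $\Sub_{\sT_P}$ is a full first order hyperdoctrine and $\sT_P$ is a logos, as claimed.
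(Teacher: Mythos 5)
Your proposal is correct in outline, and it is essentially the classical argument: note that the paper itself gives no proof of this statement, deferring to \cite[Thm.~3.4]{TTT}, and the proof there (reworked in the doctrinal setting in \cite{UEC,TECH}) proceeds exactly as you do, by identifying $\Sub_{\sT_P}(A,\rho)$ with the poset of strict, $\rho$-extensional predicates in $P(A)$ and transporting the fibrewise Heyting structure of $P$, with each operation corrected by the strictness predicate $P_{\Delta_A}(\rho)$. The one place where you deviate is the construction of $\forall_f$: the standard proof defines it directly on a functional relation $\phi\in P(A\times B)$ by the formula $\forall_f(\psi):=P_{\Delta_B}(\sigma)\wedge\forall_{\pr_B}\bigl(\phi\rightarrow P_{\pr_A}(\psi)\bigr)$, verifying the adjunction and Beck--Chevalley in a single computation that uses functionality of $\phi$, whereas you factor $f$ through its graph and quantify along the mono (by Heyting implication) and along the projection (by the $\forall$ of $P$). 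Your decomposition is legitimate and arguably more conceptual, but it carries an extra obligation that you only gesture at: to get Beck--Chevalley for a composite from Beck--Chevalley for the factors, you must check that the graph factorization is stable under pullback in $\sT_P$ (the graph of a pulled-back morphism is the pullback of the graph), and that reindexing in $\Sub_{\sT_P}$ preserves implication --- which should be verified directly from the relational formula for $f^{*}$ using functionality, since deriving it from the quantifiers you are in the middle of constructing would be circular. With those two points made explicit, your sketch matches the cited proof in substance.
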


A necessary and sufficient condition for making $\sT_P$ a topos is provided by a careful analysis in \cite{TTT}. In particular, given a first order hyperdoctrine $\doctrine{\mC}{P}$, we have $\sT_P$ is a topos precisely when $P$ satisfies in its internal language wa form of \emph{Comprehension Axiom}:
\begin{axiom}[CA]
A first order hyperdoctrine $\doctrine{\mC}{P}$ satisfies the \textbf{Comprehension Axiom} if for any object $X$ of $\mC$ there exists an object $\mathrm{P}(X)$ and an element $\in_X$ of $P(X\times \mathrm{P}(X))$ such that for every object $Y$ of $\mC$ and every $\alpha\in P(X\times Y)$, $P$ satisfies the following sentence
   \[ \forall i: Y. \; \exists s: \mathrm{P}(X).\;\forall x: X. \;\in_X (x,s)\longleftrightarrow \alpha (x,i)\]
   in its internal language.
\end{axiom}
Hence, we have the following result, see \cite[Thm. 4.2]{TTT}.
\begin{theorem}\label{thm:hyperdoctrine and CA iff T topos}
Let $\doctrine{\mC}{P}$ be a first-order hyperdoctrine. Then $\sT_P$ is a topos if and only if $P$ satisfies (CA).
\end{theorem}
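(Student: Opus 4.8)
The plan is to reduce the statement to the existence of \emph{power objects} and then translate that categorical universal property into the internal sentence (CA). Since we have already recalled that for a first-order hyperdoctrine $\doctrine{\mC}{P}$ the category $\sT_P$ is an exact logos, and since a finitely complete category is an (elementary) topos precisely when it has power objects for all its objects, the whole theorem amounts to proving that $\sT_P$ has power objects if and only if $P$ satisfies (CA). The technical engine is the standard dictionary between subobjects in $\sT_P$ and predicates in $P$: for an object $(A,\rho)$ the poset $\Sub_{\sT_P}(A,\rho)$ is isomorphic to the poset of $\rho$-\emph{strict}, $\rho$-\emph{extensional} predicates $\mu\in P(A)$, i.e. those with $\mu\leq P_{\Delta_A}(\rho)$ and $P_{\pr_1}(\mu)\wedge\rho\leq P_{\pr_2}(\mu)$; more generally a subobject of a product $(A,\rho)\times(B,\sigma)$ corresponds to such a predicate $\beta\in P(A\times B)$, while substitution of predicates matches pullback of subobjects. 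I would first establish (or recall) this dictionary, together with the observation that under it a morphism $(B,\sigma)\to(C,\gamma)$ into a candidate power object corresponds, via pullback of a fixed membership predicate, to a predicate on $A\times B$.

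For the direction (CA) $\Rightarrow$ topos, assume the comprehension data $\mathrm{P}(X)$ and $\in_X\in P(X\times\mathrm{P}(X))$. I would first build the power object of a \emph{constant} object $(A,\delta_A)$ by equipping $\mathrm{P}(A)$ with the extensional-equality predicate
\[\tau_A\ :=\ \forall x\colon A.\ \big(\in_A(x,s_1)\leftrightarrow\in_A(x,s_2)\big),\]
suitably conjoined with extent predicates so as to become a partial equivalence relation, and by taking as membership subobject of $(A,\delta_A)\times(\mathrm{P}(A),\tau_A)$ the one named by $\in_A$. Symmetry and transitivity of $\tau_A$ are immediate from the Heyting structure of the fibres, and extensionality of $\in_A$ with respect to $\tau_A$ is built into the definition. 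The universal property is then exactly (CA): given $\beta\in P(A\times B)$ representing a subobject of $(A,\delta_A)\times(B,\sigma)$, the subformula $\exists s.\,\forall x.\,(\in_A(x,s)\leftrightarrow\beta(x,i))$ produces a total functional relation $(B,\sigma)\to(\mathrm{P}(A),\tau_A)$ classifying $\beta$, the outer $\forall i$ yielding totality while single-valuedness is forced by $\tau_A$ being extensional equality. A routine descent argument then propagates the existence of power objects from constant objects to their subquotients, and since every object of $\sT_P$ is a quotient of a subobject of some $(A,\delta_A)$, all power objects exist and $\sT_P$ is a topos.

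For the converse, assume $\sT_P$ is a topos. For each object $A$ of $\mC$ take the power object $\mathcal{P}(A,\delta_A)=(C,\gamma)$ of the constant object, with its membership subobject $\ni\rightarrowtail(A,\delta_A)\times(C,\gamma)$. Setting $\mathrm{P}(A):=C$ and letting $\in_A\in P(A\times C)$ be the predicate naming $\ni$ under the dictionary, the defining universal property — every subobject of $(A,\delta_A)\times(B,\sigma)$ is classified by a morphism $(B,\sigma)\to(C,\gamma)$ pulling $\ni$ back to it — translates, term by term, into the internal sentence $\forall i.\,\exists s.\,\forall x.\,(\in_A(x,s)\leftrightarrow\alpha(x,i))$ for every $\alpha\in P(A\times B)$: the classifying map provides the witness $s$ as a term in $i$ (the $\exists s$), the pullback equation provides the biconditional, and reading the universal quantifier $\forall i$ off the dependence on the generic element of $(B,\sigma)$ yields (CA).

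The part I expect to be most delicate is the precise two-way translation between the external universal property of power objects and the internal sentence (CA). In particular, (CA) as written carries only an existential quantifier $\exists s$, with no uniqueness, so in the reverse direction uniqueness of the classifying map in $\sT_P$ is \emph{not} given by the axiom directly: it is recovered only because $\tau_A$ is extensional equality, forcing any two witnesses to be $\tau_A$-equal and hence the classifying functional relation single-valued. Verifying that $\tau_A$ is a partial equivalence relation, that $\in_A$ is genuinely $\tau_A$-extensional, and that the strictness/extensionality side-conditions of the dictionary are preserved throughout — together with the descent step reducing arbitrary $(A,\rho)$ to constant objects — is where the real work lies; the purely logical manipulations are routine given the Heyting structure on the fibres.
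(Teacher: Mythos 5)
Your proposal is correct in outline, but note first that the paper does not actually prove this statement: it recalls it from the original tripos paper, citing \cite[Thm.~4.2]{TTT}, so the comparison is against the Hyland--Johnstone--Pitts argument. Your reduction to power objects, the dictionary between $\Sub_{\sT_P}(A,\rho)$ and $\rho$-strict, $\rho$-extensional predicates, and the converse direction (reading (CA) off the power object of a constant object, with uniqueness of witnesses recovered from $\tau_A$ being extensional equality rather than from the axiom) all match the standard proof. Where you genuinely diverge is in the forward direction: HJP construct the power object of an \emph{arbitrary} object $(A,\rho)$ directly, by taking $\mathrm{P}(A)$ with the partial equivalence relation asserting that $s_1,s_2$ are strict and extensional with respect to $\rho$ and extensionally equal --- the strictness/extensionality clauses are folded into $\tau$ itself, so no reduction to constant objects is needed. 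You instead treat only constant objects $(A,\delta_A)$ (where strictness and extensionality are automatic, so your ``suitably conjoined with extent predicates'' is actually vacuous there) and then invoke a ``routine descent argument'' through subquotients. That descent is the one point you should not undersell: passing power objects along subobjects is easy (an equalizer of the identity against intersection with $B$), but passing them along quotients $q\colon A\twoheadrightarrow Q$ requires building a saturation operator $\mathrm{sat}\colon \mathrm{P}A\to\mathrm{P}A$ from the kernel pair via images, taking its equalizer against the identity, and using that in the regular/exact category $\sT_P$ subobjects of $Q$ correspond exactly to saturated subobjects of $A$ (Beck--Chevalley for the kernel-pair square plus $\exists_q q^{\ast}=\mathrm{id}$). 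This works, and is in the spirit of the Menni-style exact-completion arguments used elsewhere in the paper, but it is a nontrivial lemma rather than a routine step; the direct HJP construction buys you a uniform one-line formula for $\tau$ and avoids the descent machinery entirely, at the cost of a slightly heavier verification that $\tau$ is a per and that the universal property holds relative to general $(B,\sigma)$.
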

Since every tripos satisfies (CA) we have the following corollary:
\begin{cor}\label{cor:tripos-to-tops of a tripos is topos}
If $\doctrine{\mC}{P}$ is a tripos, then $\sT_P$ is a topos.
\end{cor}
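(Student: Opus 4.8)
The plan is to apply \Cref{thm:hyperdoctrine and CA iff T topos}, which reduces the statement to checking that a full tripos---by \Cref{def:tripos} a full first-order hyperdoctrine equipped with weak power objects---satisfies the Comprehension Axiom (CA). The heart of the argument is to convert the \emph{external} datum provided by the weak power objects, namely a classifying morphism in the base category, into the \emph{internal} existential assertion demanded by (CA).

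First I would fix an object $X$ of $\mC$ and take the weak power object $\mathrm{P}(X)$ together with its membership predicate $\in_X\in P(X\times \mathrm{P}(X))$. Given an arbitrary object $Y$ and a predicate $\alpha\in P(X\times Y)$, the weak power object property supplies a morphism $\freccia{Y}{\{\alpha\}_X}{\mathrm{P}(X)}$ such that $P_{\id_X\times \{\alpha\}_X}(\in_X)=\alpha$ in $P(X\times Y)$. Read pointwise, this equality says that $\in_X(x,\{\alpha\}_X(i))$ and $\alpha(x,i)$ name the same element of $P(X\times Y)$, so their Heyting biconditional equals $\top$. Since the right adjoint $\forall$ along the projection that forgets $x$ preserves $\top$, I obtain that $\forall x.\,(\in_X(x,\{\alpha\}_X(i))\longleftrightarrow \alpha(x,i))$ equals $\top$ in $P(Y)$.

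The key step is to discharge the existential quantifier over $\mathrm{P}(X)$ using $\{\alpha\}_X$ as an explicit witness. Let $\psi\in P(\mathrm{P}(X)\times Y)$ denote the predicate $\forall x.\,(\in_X(x,s)\longleftrightarrow \alpha(x,i))$. I would invoke the standard fact that a section yields an existential witness: because $\angbr{\{\alpha\}_X}{\id_Y}$ is a section of the projection $\freccia{\mathrm{P}(X)\times Y}{\pr}{Y}$, functoriality of $P$ together with the unit of $\exists_{\pr}\dashv P_{\pr}$ gives $P_{\angbr{\{\alpha\}_X}{\id_Y}}(\psi)\leq \exists_{\pr}(\psi)$. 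By the previous paragraph the left-hand side is $\top$, so $\exists s.\,\psi$ equals $\top$ in $P(Y)$. Quantifying universally over $i\colon Y$---again using that $\forall$ preserves $\top$---yields the closed sentence $\forall i.\,\exists s.\,\forall x.\,(\in_X(x,s)\longleftrightarrow \alpha(x,i))$ as $\top$ in $P(1)$, which is exactly (CA). Applying \Cref{thm:hyperdoctrine and CA iff T topos} then concludes that $\sT_P$ is a topos.

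I expect the only genuine obstacle to be the bookkeeping of this external-to-internal translation: one must check that the base-category equality $P_{\id_X\times \{\alpha\}_X}(\in_X)=\alpha$ really matches the substitution of the witnessing term for $s$ inside $\psi$, and confirm that the Beck--Chevalley condition legitimises commuting the nested quantifiers with reindexing along $Y$. Once this dictionary is in place, each of the individual inequalities is routine.
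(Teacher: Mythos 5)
Your proposal is correct and follows exactly the paper's route: the paper derives the corollary from \Cref{thm:hyperdoctrine and CA iff T topos} together with the (unproven, cited) observation that every tripos satisfies (CA), and your argument simply supplies the details of that observation, using the weak power object's classifying map as the explicit witness via the section $\angbr{\{\alpha\}_X}{\id_Y}$, the unit of $\exists_{\pr}\dashv P_{\pr}$, and Beck--Chevalley. The internalization steps you flag as bookkeeping do go through as you describe, so nothing is missing.
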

The following examples are discussed in \cite{TT}, and show that realizability and localic toposes can be presented as instances of the same construction, i.e. the tripos-to-topos one.
\begin{example}\label{ex:loc_sh_as_ttt}
The topos $\sT_{\loc^{(-)}}$ associated to the tripos $\doctrine{\set}{\loc^{(-)}}$ is equivalent to the category of sheaves $\sh{\loc}$ of the locale.
\end{example}
\begin{example}
Given a pca $\pca{A}$, the tripos-to-topos $\sT_{\mathcal{P}}$ of the realizability tripos $\doctrine{\set}{\mathcal{P}}$ is equivalent to the realizability topos $\mathsf{RT}(\pca{A})$.
\end{example}
	
	\begin{example}\label{ex: exact comp of weak sub 1}  
	By \Cref{cor: regular come  ex lex}, we have that the exact completion  $\exlex{\mC}$ of a category $\mC$  with finite limits happens to be equivalent to the tripos-to-topos $\sT_{\Psi_{\mC}}$ of the doctrine $\doctrine{\mC}{\Psi_{\mC}}$ of weak subobjects of $\mC$.
	\end{example}
The notion of \emph{geometric morphism} of triposes was originally introduced in \cite[Def. 3.4]{TT} for $\set$-based triposes. Here we consider a slightly generalization for arbitrary based triposes that can be found in \cite{FREY2015232}.

\begin{definition}
Let $\doctrine{\mC}{P}$ and $\doctrine{\mD}{R}$ be two full triposes. 
  A \textbf{geometric morphism} of triposes $\freccia{P}{(F,\mathfrak{b})}{R}$ is a lex primary morphism of doctrines that has a left adjoint. A geometric morphism is said to be a \textbf{geometric embedding} if the counit of the adjunction is an iso.
\end{definition}

\begin{example}
    Assuming the axiom of choice, we have a geometric inclusion of the tripos $\doctrine{\set}{\Sub_{\set}}$ into the the realizability tripos $\doctrine{\set}{\mathcal{P}}$ sending a mono $\freccia{X}{m}{Y}$ into $\exists_{f}(\top_X)$.
\end{example}
Now we recall a useful result, showing that a geometric morphism (and also a geometric inclusion) of triposes induces a geometric morphism (and a geometric inclusion) between their tripos-to-topos. This fact was already proved in the original work \cite[Prop. 3.5]{TT} for $\set$-based triposes, and it has been analyzed in the more general setting of arbitrary based triposes by Frey in \cite{FREY2015232}
\begin{theorem}\label{cor geometric morphism of triposes induce geometric morphisms}
Every $(F,\mathfrak{b})\dashv (G,\mathfrak{c})$ is a geometric morphism of triposes induces a geometric morphism $\sT(F,\mathfrak{b})\dashv \sT(G,\mathfrak{c})$  of toposes between their tripos-to-topos construction. The same result holds for geometric inclusions.
\end{theorem}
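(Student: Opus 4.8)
The plan is to realise both legs of the geometric morphism of toposes by applying the tripos-to-topos construction to the two legs of the adjunction $(F,\mathfrak{b})\dashv(G,\mathfrak{c})$, and then to verify the two defining extras of a geometric morphism: left-exactness of the inverse image and, in the embedding case, invertibility of the counit.

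First I would pin down how a morphism of triposes acts on objects and arrows of the tripos-to-topos. On an object $(A,\rho)$ of $\sT_P$ the candidate value is $(FA,\wht{\mathfrak{b}}(\rho))$, with $\wht{\mathfrak{b}}(\rho)\in R(FA\times FA)$ obtained from $\mathfrak{b}_{A\times A}(\rho)$ along the canonical iso $F(A\times A)\cong FA\times FA$, and likewise $\phi\mapsto\wht{\mathfrak{b}}(\phi)$ on arrows. Symmetry, transitivity and the arrow-conditions (i)--(iv) are phrased purely with finite meets and reindexing, hence are preserved because $F$ is lex and $\mathfrak{b}$ is a meet-preserving natural transformation. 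The one delicate clause is the totality condition (v), $P_{\Delta_A}(\rho)\leq\exists_{\pr_1}(\phi)$: applying $\mathfrak{b}$ yields only the Beck--Chevalley comparison $\exists_{\pr_1}\wht{\mathfrak{b}}(\phi)\leq\wht{\mathfrak{b}}(\exists_{\pr_1}\phi)$, which points the wrong way unless $(F,\mathfrak{b})$ is full existential. I would therefore first establish the key structural fact that a morphism of triposes which is a left adjoint in $\Trip$ automatically preserves the left adjoints $\exists_f$, i.e. is full existential; this is the abstract reason the left-adjoint leg is the one giving a well-defined functor. Granting this, $(F,\mathfrak{b})$ is full existential, so by the $2$-functoriality of the exact completion of an existential doctrine (\cite{UEC}) the functor $\sT(F,\mathfrak{b})\colon\sT_P\to\sT_R$ is well defined and exact, in particular left exact.

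Next I would produce the direct image and the adjunction. The cleanest route is to work inside the pre-equipment $\Trip$ (\cite{FREY2015232}), where $\sT(-)$ is a genuine $2$-functor; applying it to the unit and counit of $(F,\mathfrak{b})\dashv(G,\mathfrak{c})$ and to the two triangle identities yields unit and counit exhibiting $\sT(F,\mathfrak{b})\dashv\sT(G,\mathfrak{c})$. Since the left adjoint $\sT(F,\mathfrak{b})$ is left exact, this adjunction is exactly a geometric morphism of toposes, with $\sT(F,\mathfrak{b})$ as inverse image and $\sT(G,\mathfrak{c})$ as direct image. For the embedding statement I would use that a $2$-functor sends invertible $2$-cells to invertible ones: if the counit of $(F,\mathfrak{b})\dashv(G,\mathfrak{c})$ is an isomorphism, then so is the counit of $\sT(F,\mathfrak{b})\dashv\sT(G,\mathfrak{c})$, which is precisely the condition for a geometric embedding (equivalently, the direct image $\sT(G,\mathfrak{c})$ is full and faithful).

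The step I expect to be the main obstacle is the asymmetry between the two legs. The right-adjoint leg $(G,\mathfrak{c})$ is only lex primary and, as the comparison above shows, does not preserve clause (v) under the naive image-formula, so the direct image cannot be defined by that formula; it must be obtained as a genuine right adjoint, and the real content is to check that the $2$-functoriality of $\sT$ is strong enough to carry the whole adjunction (equivalently, to construct the right adjoint of the exact functor $\sT(F,\mathfrak{b})$ by hand and identify it with $\sT(G,\mathfrak{c})$). Establishing that left adjoints in $\Trip$ are full existential, and the consequent left-exactness of the inverse image, is the genuinely geometric input; everything downstream is then formal $2$-categorical bookkeeping.
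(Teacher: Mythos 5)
First, note what the paper itself does with this statement: it is \emph{recalled}, not proved --- the text explicitly defers to \cite[Prop.~3.5]{TT} for $\set$-based triposes and to \cite{FREY2015232} for the general case, so your attempt must be measured against those proofs rather than against an argument in the paper. Your inverse-image half is correct and matches the known route: the observation that the left-adjoint leg $(F,\mathfrak{b})$ automatically preserves the quantifiers $\exists_f$ (the paper records exactly this in the remark following the theorem), so that $\sT(F,\mathfrak{b})$ is a well-defined exact, hence left exact, functor by the $2$-functoriality of the exact completion of full existential doctrines from \cite{UEC}.

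The gap is in the direct-image half, and you have in fact diagnosed it yourself without repairing it. Your main mechanism --- ``work inside the pre-equipment $\Trip$, where $\sT(-)$ is a genuine $2$-functor; applying it to the unit and counit \ldots yields unit and counit exhibiting $\sT(F,\mathfrak{b})\dashv\sT(G,\mathfrak{c})$'' --- is circular, because there is no $2$-category containing both legs on which $\sT$ acts: your own analysis of clause (v) shows that a merely lex primary morphism such as $(G,\mathfrak{c})$ does \emph{not} induce a functor between the toposes by the pointwise formula, and $\sT$ is only functorial on full existential morphisms. So one cannot ``apply the $2$-functor'' to $(G,\mathfrak{c})$, to the unit, or to the triangle identities; the symbol $\sT(G,\mathfrak{c})$ has no prior meaning. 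In the actual proofs this is the substantive step: the direct image is constructed by hand as a right adjoint of $\sT(F,\mathfrak{b})$, using the weak power objects of the tripos to replace an object $(B,\sigma)$ by an isomorphic \emph{complete} object (its object of singletons), on which the pointwise transport along $\mathfrak{c}$ does yield total functional relations; see \cite{TT,FREY2015232} and also \cite{van_Oosten_realizability}. Your closing sentence concedes that the right adjoint ``must be obtained as a genuine right adjoint \ldots by hand,'' but the construction is never carried out, and with it falls the embedding clause as well: the claim that ``a $2$-functor sends invertible $2$-cells to invertible ones'' presupposes exactly the functoriality that fails, so the fullness and faithfulness of the direct image when the doctrinal counit is iso also needs a direct verification against the by-hand construction. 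In short: the left-exact inverse image and the preservation of $\exists$ by left adjoints are correct and are the easy part; the existence of $\sT(G,\mathfrak{c})$ is the theorem's real content and is missing.
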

\begin{remark}
    Notice that the left adjoint of a geometric morphism of triposes is a morphism of full existential doctrines.
\end{remark}

\section{Abstracting localic presheaves for lex primary doctrines}
The main purpose of this section is to generalize the notion of \emph{localic presheaves} to doctrines.  To this purpose recall  from \cite{CLTT,EQC,UEC} that:
\begin{definition}
The \textbf{Grothendieck construction} or \textbf{category of points} $\mG_P$ of a lex primary doctrine $\doctrine{\mC}{P}$ has defined as follows:
\begin{itemize}
\item objects of $\mG_P$  are pairs $(A,\alpha)$, where $A$ is an object of $\mC$ and $\alpha\in P(A)$;
\item  a morphism  $\freccia{(A,\alpha)}{f}{(B,\beta)}$ in  $\mG_P$ is an arrow $\freccia{A}{f}{B}$ of $\mC$ such that $\alpha\leq P_f(\beta)$.
\end{itemize}
\end{definition}
It is direct to check that the category of points of a lex primary doctrine inherits finite limits from the base category of the doctrine:
\begin{proposition}\label{prop: G_P has finite limits}
Let $\doctrine{\mC}{P}$ be a lex primary doctrine. Then $\mG_P$ has finite limits. 
\end{proposition}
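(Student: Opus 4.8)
The plan is to show that limits in $\mG_P$ are computed by taking the corresponding limit in the base $\mC$ and equipping it with the meet of the reindexed predicates. The key structural facts I would exploit are that $\mC$ has finite limits by hypothesis, that each fibre $P(A)$ is an inf-semilattice (so finite meets, including the empty meet $\top$, exist), and that every reindexing map $P_f$ is a morphism of inf-semilattices, hence preserves finite meets and the top element. A single argument over an arbitrary finite diagram then covers terminal object, binary products and equalizers simultaneously.

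First I would fix a finite diagram $\freccia{\mathcal{J}}{D}{\mG_P}$ with $D(j)=(A_j,\alpha_j)$, and write $UD$ for the underlying diagram in $\mC$ obtained by forgetting predicates. Since $\mC$ has finite limits, let $L=\lim UD$ with limit projections $\freccia{L}{p_j}{A_j}$. I would then set $\lambda:=\bigwedge_{j}P_{p_j}(\alpha_j)\in P(L)$, a finite meet that exists because $\mathcal{J}$ has finitely many objects and $P(L)$ is an inf-semilattice. As $\lambda\leq P_{p_j}(\alpha_j)$ for every $j$, each projection lifts to a morphism $\freccia{(L,\lambda)}{p_j}{(A_j,\alpha_j)}$ of $\mG_P$; compatibility with the arrows of $D$ is automatic, since the forgetful functor is faithful and the $p_j$ already form a cone in $\mC$.

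For the universal property I would note that a cone over $D$ with vertex $(C,\delta)$ is precisely a cone $(\freccia{C}{c_j}{A_j})_j$ over $UD$ in $\mC$ satisfying $\delta\leq P_{c_j}(\alpha_j)$ for all $j$. Such a cone induces a unique $\freccia{C}{u}{L}$ with $p_j\circ u=c_j$; using that $P_u$ preserves finite meets together with functoriality $P_u\circ P_{p_j}=P_{p_j\circ u}=P_{c_j}$, I would compute $P_u(\lambda)=\bigwedge_j P_{c_j}(\alpha_j)\geq\delta$, so that $u$ lifts to a morphism $\freccia{(C,\delta)}{u}{(L,\lambda)}$ in $\mG_P$. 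Uniqueness follows from uniqueness of $u$ in $\mC$ and faithfulness of the forgetful functor. Taking $\mathcal{J}$ empty, the empty meet is the top element and this yields the terminal object $(1,\top_1)$, whose universal property holds because reindexing preserves $\top$.

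The argument is essentially routine, and I do not expect a genuine obstacle; the only point requiring care—and indeed the whole content of the statement—is that reindexing along morphisms of $\mC$ preserves the finite meets used to define $\lambda$. This is exactly what is guaranteed by $P$ taking values in $\infsl$, and it is what allows the predicate $\lambda$ on the limit to be both below each $P_{p_j}(\alpha_j)$ and large enough to absorb any competing cone vertex $\delta$.
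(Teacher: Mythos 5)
Your proof is correct, and it is exactly the standard argument the paper has in mind: the paper states this proposition without proof, remarking only that it is ``direct to check'' that $\mG_P$ inherits finite limits from $\mC$. Your construction---taking the limit in $\mC$ and equipping it with the finite meet $\bigwedge_j P_{p_j}(\alpha_j)$ of the reindexed predicates, with the universal property following from $P_u$ being an $\infsl$-morphism---is precisely that routine check, carried out correctly.
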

\begin{example}\label{ex:A coprod comp}
Let $\loc$ be a locale. The category of points of $\mG_{\loc^{(-)}}$ of the localic tripos $\doctrine{\set}{\loc^{(-)}}$ is the category of \emph{fuzzy sets} \cite{Barr_1986}. Moreover this is equivalent to the coproduct completion $\loc_+$  of the locale $\loc$.
\end{example}

\begin{example}\label{ex:partition assemblies as grothendieck cat}
The category $\parassembly{\pca{A}}$ of partitioned assemblies (see e.g. \cite{van_Oosten_realizability,colimitcomprobinson}) of a partial combinatory algebra $\pca{A}$ is equivalent to the  category of points $\mG_{\pca{A}^{(-)}}$ of the doctrine $\doctrine{\set}{\pca{A}^{(-)}}$ defined in \Cref{ex: primary doctrine pca}.
\end{example}

Now the main idea for generalizing the construction of the category of localic presheaves is to look at its categorical properties in terms of the generating triposes.
In particular, recall from
\cite{SFEC,MENNI2002187,MENNI2007511} that
the category of presheaves $\mathsf{PSh}(\loc)$  of a locale  $\loc$ can be equivalently described as the {\it exact on lex completion} of the
coproduct completion $\loc_+$ of the locale $\loc$
\[\mathsf{PSh}(\loc)\equiv \exlex{\loc_+}\]
or, even better, as
\[\mathsf{PSh}(\loc)\equiv \exlex{\mG_{\loc^{(-)}}}.\]
since the coproduct completion $\loc_+$ is given by the category of points $\mG_{\loc^{(-)}}$ of the localic tripos $\doctrine{\set}{\loc^{(-)}}$ as observed in \Cref{ex:A coprod comp}.

This crucial observation suggests us how to abstract the notion of category of localic presheaves to arbitrary doctrines:

\begin{definition}\label{def:P-presheaves}
Let $\doctrine{\mC}{P}$ be a lex primary doctrine. We define the \textbf{exact category of points of} $P$ as the category $\presh{P}:=\exlex{\mG_P}$.
\end{definition}

\begin{example}\label{ex;localic presh topos}
 Let $\loc$ be a locale and the localic tripos $\doctrine{\set}{\loc^{(-)}}$. As anticipated at the beginning of this section, we have the equivalence $\presh{\loc^{(-)}}= \exlex{\loc_+}\equiv\mathsf{PSh}(\loc)$.
\end{example}
\begin{example}
    The category $\presh{\top}$ associated with the trivial lex primary doctrine $\doctrine{\mC}{\top}$ defined in \Cref{ex:weak sub is full ex comp} is precisely the exact completion $\exlex{\mC}$.
\end{example}

\begin{example}
By \Cref{ex:partition assemblies as grothendieck cat}, we have that the category $\parassembly{\pca{A}}$ of partitioned assemblies  of a given $\pca{A}$ is equivalent to the  category of points $\mG_{\pca{A}^{(-)}}$ of the doctrine $\doctrine{\set}{\pca{A}^{(-)}}$ defined in \Cref{ex: primary doctrine pca}. Hence
The category $\presh{\pca{A}^{(-)}}$ associated with the doctrine$\doctrine{\set}{\pca{A}^{(-)}}$ is precisely $\presh{\pca{A}^{(-)}}=\exlex{\parassembly{\pca{A}}}\equiv \mathsf{RT}(\pca{A})$.

\end{example}

\begin{example}\label{ex:G_P realizability}
Let $\pca{A}$ be a pca, and let us consider the realizability tripos  $\doctrine{\set}{\mathcal{P}}$. Objects and morphisms of the category of points $\mG_{\mathcal{P}}$ can be described as follows: they are pairs $(X,\alpha)$, where $X$ is a set and $\alpha\subseteq X\times \pca{A}$ is a relation. A morphism $\freccia{(X,\alpha)}{f}{(B,\beta)}$ is given by a function $\freccia{X}{f}{Y}$ such that there exists an element $a\in \pca{A}$ that \emph{tracks} $f$, namely for every $x$ in $X$ and for every $b$ in $\pca{A}$, we have that if $x\alpha b$ then $f(x)\beta (a\cdot b)$.

\end{example}

\section{Abstracting localic supercompactification for lex primary doctrines}
The full existential completion introduced in~\cite{ECRT} and studied in \cite{MaiettiTrotta21,FREY2023} is a construction that freely adds existential quantifiers to a given lex primary doctrine, and it captures, as a particular instance, the construction of a supercoherent locale from an inf-semilattice in the sense of~\cite{BANASCHEWSKI199145}.

This result, proved in~\cite[Sec.~7.4]{MaiettiTrotta21}, motivates identifying this construction as the doctrinal abstraction of the notion of \emph{supercompactification}.

We briefly recall the definition of the full existential completion from~\cite{ECRT} and its application to the construction of a supercoherent locale.

\bigskip
\noindent
\textbf{Full existential completion}. Let $\doctrine{\mC}{P}$ be a lex primary doctrine.  For every object $A$ of $\mC$ consider the following preorder:
\begin{itemize}
\item \textbf{objects:} pairs $(\frecciasopra{B}{f}{A},\alpha)$, where $A$ and $B$ are objects of $\mC$ and $\alpha\in P(B)$.
\item \textbf{order:}  $(\frecciasopra{B}{f}{A},\alpha)\leq (\frecciasopra{C}{g}{A},\beta)$ if there exists an arrow $\freccia{B}{h}{C}$ of $\mC$ such that $f=gh$ and $ \alpha\leq P_{h}(\beta).$

\end{itemize}
It is easy to see that the previous data give a preorder, and we denote by
$\compexfull{P}(A)$ the partial order obtained by its poset reflection.

Given a morphism $\freccia{A}{f}{B}$ in $\mC$, let $\compexfull{P}_f(\frecciasopra{C}{g}{B},\beta)$ be the object 
$(\frecciasopra{D}{\pbmorph{f}{g}}{A},\; P_{g^*f}(\beta) )$
where  $f^*g$ and $g^*f$ are defined by the pullback of $f$ and $g$ as usual.

The assignment $\doctrine{\mC}{\compexfull{P}}$ is called the \textbf{full existential completion} of $P$.

\begin{theorem}\label{thm: compex P is existential and RC}
The doctrine $\doctrine{\mC}{\compexfull{P}}$ is a full existential doctrine. Moreover the assignment $P\mapsto \compexfull{P}$ extends to a 2-functor 

\[\begin{tikzcd}
		\fullprDoc && \EED
		\arrow[""{name=0, anchor=center, inner sep=0}, "\compexfull{(-)}",curve={height=-14pt}, from=1-1, to=1-3]
		\arrow[""{name=1, anchor=center, inner sep=0}, "",curve={height=-14pt}, hook',from=1-3, to=1-1]
		\arrow["\dashv"{anchor=center, rotate=-90}, draw=none, from=1, to=0]
	\end{tikzcd}\]
 from the 2-category $\fullprDoc$ of lex primary doctrines to the 2-category $\EED$ of full existential doctrine.
\end{theorem}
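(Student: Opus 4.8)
The plan is to prove the statement in two stages: first that $\compexfull{P}$ carries a full existential structure over the same base $\mC$ (which already has finite limits), and then that the canonical embedding $P\hookrightarrow\compexfull{P}$ exhibits $\compexfull{P}$ as the free full existential doctrine on $P$, giving the left 2-adjoint $\compexfull{(-)}$ to the forgetful inclusion $\EED\hookrightarrow\fullprDoc$.

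For the first stage I would make the fibrewise structure explicit. The top element of $\compexfull{P}(A)$ is $(\frecciasopra{A}{\id_A}{A},\top_A)$, and the meet of $(\frecciasopra{B}{f}{A},\alpha)$ and $(\frecciasopra{C}{g}{A},\beta)$ is obtained by pulling back $f$ along $g$: if $\pi_B,\pi_C$ denote the projections of $B\times_A C$, the meet is $(\frecciasopra{B\times_A C}{f\pi_B}{A},\,P_{\pi_B}(\alpha)\wedge P_{\pi_C}(\beta))$, and the universal property of the pullback shows it is the greatest lower bound. Pseudofunctoriality and meet-preservation of the reindexings $\compexfull{P}_f$ then follow from the pasting of pullbacks, so $\compexfull{P}$ is a lex primary doctrine. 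For the existential structure, along an arbitrary $\freccia{A}{f}{B}$ I would set $\exists_f(\frecciasopra{C}{g}{A},\alpha):=(\frecciasopra{C}{fg}{B},\alpha)$, i.e.\ define it by post-composition. The adjunction $\exists_f\dashv\compexfull{P}_f$ reduces to the universal property of the pullback defining $\compexfull{P}_f$: an order-witness on one side corresponds precisely to the mediating arrow into the pullback on the other, with the predicate inequalities matching under $P$. Because both $\exists$ and reindexing are built purely from composition and pullback, the Beck--Chevalley condition and Frobenius reciprocity for $\compexfull{P}$ are formal, reducing after unfolding to pasting identities of pullbacks in $\mC$; this shows $\compexfull{P}\in\EED$.

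For the second stage I would take as unit the lex primary morphism $\eta_P\colon P\to\compexfull{P}$ that is the identity on $\mC$ and sends $\alpha\in P(A)$ to $(\frecciasopra{A}{\id_A}{A},\alpha)$. Given a full existential doctrine $R$ and a lex primary morphism $(F,\mathfrak{b})\colon P\to R$, every element of $\compexfull{P}(A)$ has the shape $\exists_f\eta_P(\alpha)$, so any full existential morphism extending $(F,\mathfrak{b})$ is forced to act by $F$ on the base and on fibres by
\[\ltil{\mathfrak{b}}_A(\frecciasopra{B}{f}{A},\alpha):=\exists^{R}_{Ff}\big(\mathfrak{b}_B(\alpha)\big).\]
I would then check that $(F,\ltil{\mathfrak{b}})$ is a well-defined lex primary morphism: monotonicity on the poset reflection follows from naturality of $\mathfrak{b}$, monotonicity of $\exists^{R}$ and the counit $\exists^{R}_{Fh}R_{Fh}\leq\id$, while naturality of $\ltil{\mathfrak{b}}$ is a Beck--Chevalley computation in $R$ using that $F$ preserves pullbacks. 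The factorisation $\ltil{\mathfrak{b}}\circ\eta_P=\mathfrak{b}$ is immediate since $\exists^{R}_{\id}=\id$, and $(F,\ltil{\mathfrak{b}})$ preserves existentials along every arrow because $\exists^{R}_{Ff}\exists^{R}_{Fg}=\exists^{R}_{F(fg)}$ matches the post-composition definition in $\compexfull{P}$. Promoting this hom-bijection to a 2-natural isomorphism of hom-categories $\EED(\compexfull{P},R)\cong\fullprDoc(P,R)$ by checking that doctrine transformations correspond under the extension then yields the claimed 2-adjunction.

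I expect the main obstacle to be showing that $\ltil{\mathfrak{b}}$ preserves finite meets, as this is the only place where the full existential hypotheses on the target $R$ are genuinely needed. Applying $\ltil{\mathfrak{b}}_A$ to the pullback meet formula and using that $F$ preserves finite limits and that $\mathfrak{b}$ commutes with reindexing and meets, the required equality becomes, for the pullback of $Ff$ and $Fg$ in $\mD$ with projections $p,q$, the distributive identity
\[\exists^{R}_{Ff}(\mu)\wedge\exists^{R}_{Fg}(\nu)=\exists^{R}_{Ff\circ p}\big(R_{p}(\mu)\wedge R_{q}(\nu)\big).\]
This is exactly the identity one obtains by applying Frobenius reciprocity twice and the Beck--Chevalley condition once to the pullback square, and it is also the engine behind Frobenius for $\compexfull{P}$ in the first stage; establishing it cleanly is the technical heart of the argument.
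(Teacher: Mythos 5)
Your proposal is correct: the pullback description of fibrewise meets, the post-composition existentials with the adjunction $\exists_f\dashv\compexfull{P}_f$ reduced to the universal property of the pullback, the pasting arguments for Beck--Chevalley and Frobenius, and the forced extension $\ltil{\mathfrak{b}}_A(\frecciasopra{B}{f}{A},\alpha)=\exists^{R}_{Ff}(\mathfrak{b}_B(\alpha))$ --- with meet preservation resting on the Frobenius-twice-plus-Beck--Chevalley identity you isolate --- constitute exactly the standard verification of this theorem. The paper itself gives no proof, recalling the result from \cite{ECRT} and \cite{MaiettiTrotta21}, and your blind reconstruction agrees in all essentials with the argument given in those sources.
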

We define canonical inclusion
 $$\freccia{P}{(\id_{\mC},\mathfrak{i})}{\compexfull{P}}$$
 where $\freccia{P(A)}{\mathfrak{i}}{\compexfull{P}(A)}$ sends $\alpha\mapsto (\frecciasopra{A}{\id_A}{A}, \alpha)$. Notice that $(\id_{\mC},\mathfrak{i})$ is a lex primary morphism of doctrines, but it does not preserve existential quantifiers, i.e. it is not full existential, in general.
As direct application of the universal property of the full existential completion we have the following corollary.
\begin{cor}\label{cor:adjunction}
Let $\doctrine{\mC}{P}$ be a full existential doctrine. Then there exists a full existential morphism $\freccia{\compexfull{P}}{(\id_{\mC},\bar{\mathfrak{i}})}{P}$ such that $(\id_{\mC},\bar{\mathfrak{i}})\dashv (\id_{\mC},\mathfrak{i})$ and $(\id_{\mC},\bar{\mathfrak{i}})(\id_{\mC},\mathfrak{i})\cong \id_P$. 
\end{cor}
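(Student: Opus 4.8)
The plan is to obtain $(\id_{\mC},\bar{\mathfrak{i}})$ as the counit of the 2-adjunction recorded in \Cref{thm: compex P is existential and RC}, where $\compexfull{(-)}\colon\fullprDoc\to\EED$ is left adjoint to the forgetful inclusion $\EED\hookrightarrow\fullprDoc$ and the unit is the canonical inclusion $(\id_{\mC},\mathfrak{i})$. Since $P$ is by hypothesis full existential, it is an object of $\EED$, so evaluating the counit of this reflection at $P$ produces a morphism $(\id_{\mC},\bar{\mathfrak{i}})\colon\compexfull{P}\to P$; being a component of the counit, it lives in $\EED$ and is therefore automatically full existential. The isomorphism $(\id_{\mC},\bar{\mathfrak{i}})(\id_{\mC},\mathfrak{i})\cong\id_P$ is then exactly the triangle law of the 2-adjunction evaluated at $P$, namely $U\varepsilon_P\circ\eta_{UP}=\id_{UP}$, once one recalls that the unit $\eta$ is the inclusion $(\id_{\mC},\mathfrak{i})$.

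To make this explicit and to establish the adjunction, I would compute $\bar{\mathfrak{i}}$ fiberwise: the component $\freccia{\compexfull{P}(A)}{\bar{\mathfrak{i}}_A}{P(A)}$ sends a pair $(\frecciasopra{B}{f}{A},\alpha)$ to $\exists_f(\alpha)$, which is well defined precisely because $P$ carries all the required left adjoints. One checks that $\bar{\mathfrak{i}}_A$ descends to the poset reflection: if $(\frecciasopra{B}{f}{A},\alpha)\leq(\frecciasopra{C}{g}{A},\beta)$ is witnessed by $\freccia{B}{h}{C}$ with $f=gh$ and $\alpha\leq P_h(\beta)$, then $\exists_f(\alpha)=\exists_g\exists_h(\alpha)\leq\exists_g\exists_h P_h(\beta)\leq\exists_g(\beta)$, using $\exists_h P_h\leq\id$. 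Moreover $\bar{\mathfrak{i}}_A\mathfrak{i}_A(\alpha)=\exists_{\id_A}(\alpha)=\alpha$, which gives the claimed equality, hence the isomorphism, and matches the abstract counit.

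For the adjunction $(\id_{\mC},\bar{\mathfrak{i}})\dashv(\id_{\mC},\mathfrak{i})$ inside $\fullprDoc$, note that both 1-cells are the identity on the base, and between such morphisms a doctrine transformation with identity base component is uniquely determined by a pointwise inequality (cf. \Cref{def:doctrine transformation}); thus the relevant hom-categories are posets and the triangle identities hold automatically. It therefore suffices to exhibit the fiberwise poset adjunction $\bar{\mathfrak{i}}_A\dashv\mathfrak{i}_A$ for every $A$, i.e.
\[ \bar{\mathfrak{i}}_A(\frecciasopra{B}{f}{A},\alpha)\leq\gamma \quad\Longleftrightarrow\quad (\frecciasopra{B}{f}{A},\alpha)\leq\mathfrak{i}_A(\gamma), \]
which unwinds to $\exists_f(\alpha)\leq\gamma$ if and only if $\alpha\leq P_f(\gamma)$ (the right-hand condition being the order in $\compexfull{P}(A)$ witnessed by the choice $h=f$). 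This is nothing but the adjunction $\exists_f\dashv P_f$ valid in the full existential doctrine $P$; the unit $\alpha\leq P_f\exists_f(\alpha)$ likewise yields the required $2$-cell $\id_{\compexfull{P}}\Rightarrow(\id_{\mC},\mathfrak{i})(\id_{\mC},\bar{\mathfrak{i}})$.

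The main obstacle I expect is not any single computation but the bookkeeping needed to identify the abstract counit coming from the universal property with the explicit fiberwise formula $\exists_f(\alpha)$, and to confirm cleanly that this formula defines a \emph{full existential} morphism, i.e. that it is compatible with the purely formal existential structure of $\compexfull{P}$ on one side and the genuine one of $P$ on the other, together with the Beck–Chevalley condition and Frobenius reciprocity. Once the fiberwise adjunction $\exists_f\dashv P_f$ is isolated as the engine driving everything, the rest is formal.
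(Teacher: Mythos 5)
Your proposal is correct and matches the paper's argument: the paper proves this corollary precisely as a direct application of the universal property of the full existential completion, i.e.\ the 2-adjunction of \Cref{thm: compex P is existential and RC}, whose unit is the canonical inclusion $(\id_{\mC},\mathfrak{i})$ and whose counit at a full existential $P$ is your $(\id_{\mC},\bar{\mathfrak{i}})$. Your additional fiberwise verification that $\bar{\mathfrak{i}}_A(\frecciasopra{B}{f}{A},\alpha)=\exists_f(\alpha)$, with the adjunction $(\id_{\mC},\bar{\mathfrak{i}})\dashv(\id_{\mC},\mathfrak{i})$ reducing to $\exists_f\dashv P_f$, is a sound explicit unwinding of that same universal property rather than a different route.
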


\begin{cor}\label{prop:useful_adjunction}
Let $\doctrine{\mC}{P}$ be a lex primary doctrine. Then there an adjunction of full existential doctrine whose counit is an iso 
\[\begin{tikzcd}
	\mC^{\op} \\
	&& \infsl \\
	\mC^{\op}
	\arrow["\id_{\mC}^{\op}"',from=1-1, to=3-1]
	\arrow[""{name=0, anchor=center, inner sep=0},"\compexfull{P}"', from=3-1, to=2-3]
	\arrow[""{name=1, anchor=center, inner sep=0}, "\Psi_{\mC}",from=1-1, to=2-3]
	\arrow[""{name=2, anchor=center, inner sep=0},"\bar{\mathfrak{b}}", curve={height=-14pt}, shorten <=5pt, shorten >=5pt, from=0, to=1]
	\arrow[""{name=3, anchor=center, inner sep=0},"{\mathfrak{b}}", curve={height=-14pt}, shorten <=6pt, shorten >=6pt, from=1, to=0]
	\arrow["\dashv"{anchor=center}, draw=none, from=2, to=3]
\end{tikzcd}\]
\end{cor}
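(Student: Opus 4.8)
The plan is to obtain the stated adjunction by transporting a completely trivial adjunction along the 2-functor $\compexfull{(-)}$ of \Cref{thm: compex P is existential and RC}, exactly mirroring how \Cref{cor:adjunction} treats the inclusion $\mathfrak{i}$. The starting observation is the identification $\Psi_{\mC} = \compexfull{\top}$ from \Cref{ex:weak sub is full ex comp}, where $\doctrine{\mC}{\top}$ is the trivial lex primary doctrine whose fibres are the one-element inf-semilattice. Hence it suffices to produce an adjunction with invertible counit between $P$ and $\top$ in $\fullprDoc$ and then apply $\compexfull{(-)}$, since the latter is a 2-functor and 2-functors preserve adjunctions and invertible 2-cells.

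First I would exhibit two lex primary morphisms over $\id_{\mC}$. On one side there is the forced morphism $!=(\id_{\mC},\mathfrak{n})\colon P\to\top$, whose components $P(A)\to\top(A)$ are unique because the codomain fibres are singletons. On the other side there is $t=(\id_{\mC},\mathfrak{t})\colon\top\to P$, whose component $\top(A)\to P(A)$ selects the top element $\top_A$ of the inf-semilattice $P(A)$; this is a lex primary morphism precisely because each reindexing $P_f$ is an inf-semilattice homomorphism and therefore preserves $\top$, which is the naturality condition for $\mathfrak{t}$. By construction $!\circ t=\id_{\top}$ on the nose, while $t\circ !\colon P\to P$ is the constant-at-top assignment $\alpha\mapsto\top_A$.

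Next I would verify that $!\dashv t$ in $\fullprDoc$. Since a doctrine transformation is a natural transformation of the base functors subject to an inequality (\Cref{def:doctrine transformation}), and both candidate 2-cells lie over the identity natural transformation $\id_{\id_{\mC}}$, the check is immediate: the counit $\epsilon\colon\,!\circ t\Rightarrow\id_{\top}$ is literally the identity (as $!\circ t=\id_{\top}$), hence invertible, and the unit $\eta\colon\id_P\Rightarrow t\circ !$ exists because its defining inequality $\alpha\leq\top_A$ holds in every fibre. The two triangle identities are then automatic, because every 2-cell occurring in the triangle composites has underlying natural transformation $\id_{\id_{\mC}}$, so both composites are identities.

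Finally, applying $\compexfull{(-)}\colon\fullprDoc\to\EED$ to $!\dashv t$ yields an adjunction $\compexfull{!}\dashv\compexfull{t}$ in $\EED$ with invertible counit $\compexfull{\epsilon}$. Setting $\bar{\mathfrak{b}}:=\compexfull{!}\colon\compexfull{P}\to\compexfull{\top}=\Psi_{\mC}$ and $\mathfrak{b}:=\compexfull{t}\colon\Psi_{\mC}=\compexfull{\top}\to\compexfull{P}$ gives the required adjunction of full existential doctrines, with counit $\bar{\mathfrak{b}}\circ\mathfrak{b}=\compexfull{!\circ t}=\id_{\Psi_{\mC}}$ an isomorphism. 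There is no deep obstacle here; the one point that genuinely needs care is the direction bookkeeping, namely that one must start from $!\dashv t$ and not from $t\dashv !$: it is $!\circ t$, rather than $t\circ !$, that is the identity, so that the invertible 2-cell ends up as the counit, whereas the unit $\id_P\Rightarrow t\circ !$ is in general strictly non-invertible (the reverse inequality $\top_A\leq\alpha$ fails).
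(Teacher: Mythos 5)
Your proof is correct and takes essentially the same route as the paper: the paper's own proof likewise exhibits the trivial adjunction $\bar{\mathfrak{t}}\dashv\mathfrak{t}$ between $P$ and the trivial doctrine $\top$ over $\id_{\mC}$ (collapse-to-point left adjoint, select-top right adjoint) and transports it along the 2-functor $\compexfull{(-)}$, using the identification $\compexfull{\top}=\Psi_{\mC}$. Your extra verifications --- naturality of the top-selecting morphism, the triangle identities being automatic over $\id_{\id_{\mC}}$, and the direction bookkeeping ensuring the invertible 2-cell lands as the counit --- merely spell out details the paper leaves implicit.
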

\begin{proof}
    It is enough to observe that for every lex primary doctrine $P$ we have and adjunction of lex primary doctrine 

    \[\begin{tikzcd}
	\mC^{\op} \\
	&& \infsl \\
	\mC^{\op}
	\arrow["\id_{\mC}^{\op}"',from=1-1, to=3-1]
	\arrow[""{name=0, anchor=center, inner sep=0},"P"', from=3-1, to=2-3]
	\arrow[""{name=1, anchor=center, inner sep=0}, "\top",from=1-1, to=2-3]
	\arrow[""{name=2, anchor=center, inner sep=0},"\bar{\mathfrak{t}}", curve={height=-14pt}, shorten <=5pt, shorten >=5pt, from=0, to=1]
	\arrow[""{name=3, anchor=center, inner sep=0},"{\mathfrak{t}}", curve={height=-14pt}, shorten <=6pt, shorten >=6pt, from=1, to=0]
	\arrow["\dashv"{anchor=center}, draw=none, from=2, to=3]
\end{tikzcd}\]
where $\mathfrak{t}$ sends the top element into the top, and $\bar{\mathfrak{t}}$ sends every element of the fibre of $P$ into the unique element of the fibre of $\top$. Hence, when we apply the full existential completion to each component of diagram and we obtain the desired adjunction, since by \Cref{ex:weak sub is full ex comp}, the full existential completion of the trivial doctrine is the weak subobject doctrine, i.e.  $\compexfull{\top}=\Psi_{\mC}$.
\end{proof}

Our main examples  of full existential completions are the weak subobject doctrine,  realizability triposes and localic triposes associated with a supercoherent locale. We refer to  \cite{MaiettiTrotta21} and \cite{FREY2023} for more details:
\begin{example}\label{ex:weak sub is full ex comp}
Let $\mC$ be a category with finite limits. Then the weak subobject doctrine $\doctrine{\mC}{\Psi_{\mC}}$ is the full existential completion of the trivial doctrine on $\doctrine{\mC}{\top}$ which sends every object $A$ into the poset $\top(A):=\{\top\}$ with only an element.
\end{example}

\begin{example}\label{ex: real tripos is an existential comp}
Given a pca $\pca{A}$, the realizability doctrine $\doctrine{\set}{\mathcal{P}}$ defined in \Cref{ex:realizability tripos} is the full existential completion of the lex primary doctrine $\doctrine{\set}{\pca{A}^{(-)}}$  defined in \Cref{ex: primary doctrine pca}.
\end{example}
While all realizability triposes are   full existential completions, not all localic triposes are so. We refer to \cite{MaiettiTrotta21} for more details.
  \begin{theorem}\label{ex:supercoherent localic doctrine is full ex comp}
	The localic doctrine $\doctrine{\set}{\loc^{(-)}}$ is a full existential completion if and only if $\loc$ is supercoherent in the sense of \cite{BANASCHEWSKI199145}. In particular, when $\loc$ is supercoherent, the localic tripos $\doctrine{\set}{\loc^{(-)}}$ is the full existential completion of the lex primary doctrine  $\doctrine{\set}{\supercomp^{(-)}}$ associated to the inf-semilattice $\supercomp$ of the supercompact elements of $\loc$. In particular, we have the following isomorphisms of doctrines
	\[\compexfull{(\supercomp^{(-)})}\cong D(\supercomp)^{(-)}\cong\loc^{(-)}\]
	where $D(\supercomp)$ is the supercoherent local generated from $\supercomp$.
  \end{theorem}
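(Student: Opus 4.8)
The plan is to derive both halves of the biconditional, together with the displayed chain of isomorphisms, from a single constructional fact: for an \emph{arbitrary} inf-semilattice $\supercomp$ one has an isomorphism of full existential doctrines $\compexfull{(\supercomp^{(-)})}\cong D(\supercomp)^{(-)}$, where $D(\supercomp)$ is the frame of down-closed subsets of $\supercomp$ obtained by freely adjoining suprema, i.e.\ the supercompactification. Once this is in place I would invoke the structure theory of supercoherent locales~\cite{BANASCHEWSKI199145}: the locale $D(\supercomp)$ is supercoherent, its supercompact elements are exactly the principal downsets $\downarrow s$, and these form a sub-inf-semilattice isomorphic to $\supercomp$; conversely every supercoherent $\loc$ is recovered as $D(\supercomp)$ from its inf-semilattice $\supercomp$ of supercompact elements. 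The identification of $\supercomp$ with the supercompact elements and the second isomorphism $D(\supercomp)^{(-)}\cong\loc^{(-)}$ then follow formally.

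For the central isomorphism I would argue fibrewise over each set $X$. An object of $\compexfull{(\supercomp^{(-)})}(X)$ is a pair $(\frecciasopra{Y}{f}{X},\alpha)$ with $\alpha\in\supercomp^{Y}$, and I send it to the function $\widehat{\alpha}\colon X\to D(\supercomp)$ with $\widehat{\alpha}(x)=\downarrow\{\alpha(y)\mid f(y)=x\}$, that is $\widehat{\alpha}(x)=\bigvee_{f(y)=x}\alpha(y)$ computed in $D(\supercomp)$. I would then check that this assignment is: (i) monotone, since a witnessing $h$ with $f=gh$ and $\alpha\leq\beta\circ h$ places each $\alpha(y)$ below $\beta(h(y))$ over the same point; (ii) order-reflecting --- the one essential use of supercompactness: if $\widehat{\alpha}(x)\leq\widehat{\beta}(x)$ for all $x$, then each supercompact $\alpha(y)$ lies below the join $\bigvee_{g(z)=f(y)}\beta(z)$, hence below a single $\beta(z)$, and this choice defines the required reindexing $h$; (iii) surjective, since any $\gamma\colon X\to D(\supercomp)$ equals $\widehat{\alpha}$ for $Y=\{(x,s)\mid x\in X,\; s\in\gamma(x)\}$, $f$ the first projection and $\alpha(x,s)=s$. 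Finally I would verify naturality in $X$ (the reindexing $\compexfull{(\supercomp^{(-)})}_f$ by pullback corresponds under $\widehat{(-)}$ to precomposition $\widehat{\alpha}\circ f$, the substitution of $D(\supercomp)^{(-)}$) and that the freely added existential quantifiers $\exists_f$ along projections correspond exactly to the joins computing such substitution, so that $\widehat{(-)}$ is an isomorphism in $\EED$.

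The biconditional then follows from the characterization of full existential completions by their existential-free elements underlying the inclusion $P\to\compexfull{P}$ (\Cref{thm: compex P is existential and RC}). Transported along the isomorphism above, the existential-free elements of $\loc^{(-)}$ are precisely the functions $X\to\loc$ valued in supercompact elements, and $\loc^{(-)}$ is a full existential completion exactly when every element of each fibre is an $\exists_f$ of such an element --- equivalently, when every element of $\loc$ is a join of supercompact elements, which is supercoherence. For the converse, if $\loc$ is supercoherent with supercompact inf-semilattice $\supercomp$ then $\loc\cong D(\supercomp)$, whence $\loc^{(-)}\cong D(\supercomp)^{(-)}\cong\compexfull{(\supercomp^{(-)})}$. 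I expect the main obstacle to be the ``only if'' half: showing that the intrinsically defined existential-free elements of a localic doctrine are exactly the supercompact-valued functions, and that the sub-primary-doctrine they generate is isomorphic to $\supercomp^{(-)}$, so that supercoherence of $\loc$ is forced. This is precisely the point at which the abstract characterization of existential completions from~\cite{MaiettiTrotta21} must be matched against the lattice-theoretic notion of supercompactness.
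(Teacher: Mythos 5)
Your proposal is correct and takes essentially the same route as the proof the paper defers to (\cite[Sec.~7.4, Thm.~7.32]{MaiettiTrotta21}): first the fibrewise isomorphism $\compexfull{(\supercomp^{(-)})}\cong D(\supercomp)^{(-)}$ via downsets (with the axiom of choice, available in the ZFC base, supplying the order-reflection witnesses), then the biconditional via the intrinsic characterization of full existential completions by existential-free elements, which in the localic doctrine are exactly the supercompact-valued functions. One caution: your phrase ``every element of $\loc$ is a join of supercompact elements, which is supercoherence'' understates the definition --- supercoherence also requires the supercompact elements to be closed under finite meets (with $1_\loc$ supercompact) --- but this is precisely what your final observation delivers, since the existential-free elements of a full existential completion form a sub-primary-doctrine with inf-semilattice fibres, forcing the needed meet-closure.
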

Now we provide a result that will be useful in the following sections. First we recall the following \emph{rule of choice} from \cite{MaiettiRosoliniRelating,TECH}.
\begin{Rule}[RC]\label{rule of choice}
A first order hyperdoctrine $\doctrine{\mC}{P}$ satisfies the \textbf{Rule of Choice} when, for every element $\alpha$ of $P(A\times B)$, if $\top_A\leq \exists_{\pr_A}(\alpha)$ then there exists an arrow $\freccia{A}{f}{B}$ of $\mC$ such that $\top_A\leq P_{\angbr{\id_A}{f}}(\alpha)$;
\end{Rule}
We recall the following result from \cite[Thm. 4.16]{MaiettiTrotta21}:
\begin{proposition}\label{pro:comp_ex_sat_RC}
    Let $\doctrine{\mC}{P}$ be a lex primary doctrine. Then $\compexfull{P}$ satisfies the Rule of Choice.
\end{proposition}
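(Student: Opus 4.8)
The plan is to work entirely with the explicit combinatorial description of $\compexfull{P}$ and to exploit the fact that, in a full existential completion, the existential quantifier along a product projection is computed by mere post-composition of the structure map. Concretely, I would first record two descriptions. First, the top element $\top_A$ of $\compexfull{P}(A)$ is the class of $(\frecciasopra{A}{\id_A}{A},\top_A)$, where the second $\top_A$ is the top of $P(A)$; this is immediate since any $(\frecciasopra{B}{h}{A},\beta)$ lies below it via $h$ itself. Second, for the product projection $\freccia{A\times B}{\pr_A}{A}$ one has $\exists_{\pr_A}(\frecciasopra{C}{g}{A\times B},\varphi)=(\frecciasopra{C}{\pr_A g}{A},\varphi)$, which one checks to be left adjoint to $\compexfull{P}_{\pr_A}$ directly from the pullback description of reindexing. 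The conceptual payoff is that a witness of an existential statement in $\compexfull{P}$ is literally a section in the base category $\mC$, so the Rule of Choice should drop out almost formally.

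Next I would unfold the hypothesis. Represent an arbitrary $\alpha\in\compexfull{P}(A\times B)$ by a pair $(\frecciasopra{C}{g}{A\times B},\varphi)$ with $\varphi\in P(C)$. Using the two descriptions above, the assumption $\top_A\leq\exists_{\pr_A}(\alpha)$ unwinds to $(\frecciasopra{A}{\id_A}{A},\top_A)\leq(\frecciasopra{C}{\pr_A g}{A},\varphi)$, which by definition of the order on $\compexfull{P}(A)$ yields an arrow $\freccia{A}{h}{C}$ of $\mC$ with $\pr_A g h=\id_A$ and $\top_A\leq P_h(\varphi)$. I would then define the candidate choice map $f:=\pr_B g h\colon A\to B$; since composing $gh$ with the two projections returns $\id_A$ and $f$ respectively, one obtains $gh=\angbr{\id_A}{f}$.

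Finally I would verify $\top_A\leq\compexfull{P}_{\angbr{\id_A}{f}}(\alpha)$. Writing $u:=\angbr{\id_A}{f}$, the reindexing $\compexfull{P}_{u}(\alpha)$ is computed from the pullback of $u$ and $g$, giving $(\frecciasopra{D}{u^{\ast}g}{A},P_{g^{\ast}u}(\varphi))$ with legs $u^{\ast}g\colon D\to A$ and $g^{\ast}u\colon D\to C$. Because $gh=u=u\circ\id_A$, the pair $(\id_A,h)$ is a cone on the defining cospan, so the universal property of the pullback supplies $\freccia{A}{w}{D}$ with $u^{\ast}g\circ w=\id_A$ and $g^{\ast}u\circ w=h$. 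This $w$ is exactly the witness of $(\frecciasopra{A}{\id_A}{A},\top_A)\leq(\frecciasopra{D}{u^{\ast}g}{A},P_{g^{\ast}u}(\varphi))$, because $P_w(P_{g^{\ast}u}(\varphi))=P_{g^{\ast}u\circ w}(\varphi)=P_h(\varphi)\geq\top_A$. I expect the argument to be formal once the explicit structure is in place; the step requiring the most care is the bookkeeping of the two pullback projections together with the cone-compatibility identity $u\circ\id_A=g\circ h$, and observing that the whole derivation is independent of the chosen representative of $\alpha$, so that working with a single representative is harmless.
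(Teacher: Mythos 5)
Your proof is correct. Note that the paper itself contains no argument for this proposition---it is imported verbatim from \cite[Thm.~4.16]{MaiettiTrotta21}---and your direct verification is exactly the standard one behind that citation: identify $\top_A$ with $(\frecciasopra{A}{\id_A}{A},\top_A)$, compute $\exists_{\pr_A}$ by post-composition, extract from the hypothesis a section $h$ with $gh=\angbr{\id_A}{f}$ for $f:=\pr_B gh$, and use the cone $(\id_A,h)$ on the defining pullback to get above $\compexfull{P}_{\angbr{\id_A}{f}}(\alpha)$; your closing remark on independence of the representative of $\alpha$ is the right (and only needed) bookkeeping, since the order on $\compexfull{P}(A)$ is the poset reflection of the preorder on representatives.
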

\begin{example}\label{ex:weak sub rule of choice}
   The weak subobject doctrine $\doctrine{\mC}{\Psi_{\mC}}$ satisfies the rule of choice, see \cite{TECH}.
\end{example}
\begin{example}
	The realizability tripos $\doctrine{\set}{\mathcal{P}}$ satisfies the rule of choice, see \cite{MaiettiTrotta21}.
\end{example}
Now we present a useful characterization of first-order hyperdoctrines satisfying the rule of choice: 
\begin{proposition}\label{prop: CA+RC implies tripos}
Let $\doctrine{\mC}{P}$ be a first order hyperdoctrine such that:
\begin{enumerate}
\item $P$ satisfies the Comprehension Axiom (CA);
\item $P$ satisfies the Rule of Choice (RC). 

\end{enumerate}
Then $\doctrine{\mC}{P}$ is a tripos.
\end{proposition}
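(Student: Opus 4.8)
The plan is to verify that $P$ has weak power objects, since by \Cref{def:tripos} a full first-order hyperdoctrine equipped with weak power objects is exactly a full tripos. Fix an object $X$ of $\mC$. The Comprehension Axiom (CA) hands us an object $\mathrm{P}(X)$ together with an element $\in_X\in P(X\times\mathrm{P}(X))$, and the claim is that these already witness weak power objects. Concretely, given any object $Y$ and any $\beta\in P(X\times Y)$, I must produce an arrow $\freccia{Y}{\{\beta\}_X}{\mathrm{P}(X)}$ such that $\beta=P_{\id_X\times\{\beta\}_X}(\in_X)$.

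First I would translate the internal sentence of (CA) into an external inequality. Writing $\gamma\in P(Y\times\mathrm{P}(X)\times X)$ for the interpretation of the biconditional $\in_X(x,s)\leftrightarrow\beta(x,i)$, and $\theta:=\forall_{\pr}(\gamma)\in P(Y\times\mathrm{P}(X))$ for the result of the inner quantifier $\forall x$ (with $\pr$ the projection deleting the $X$-factor), the two outer quantifiers $\forall i\,\exists s$ unwind, using that reindexing preserves $\top$ and the adjunction $P_{!}\dashv\forall_{!}$ along $\freccia{Y}{!}{1}$, into the single inequality $\top_Y\leq\exists_{\pr_Y}(\theta)$, where $\freccia{Y\times\mathrm{P}(X)}{\pr_Y}{Y}$. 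This is exactly the hypothesis of the Rule of Choice (RC) applied to $\alpha:=\theta$ with $A:=Y$ and $B:=\mathrm{P}(X)$. Hence \Cref{rule of choice} supplies an arrow $\freccia{Y}{f}{\mathrm{P}(X)}$ with $\top_Y\leq P_{\angbr{\id_Y}{f}}(\theta)$, and I set $\{\beta\}_X:=f$.

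It then remains to unwind this last inequality into the required equation. Substituting $s:=f(i)$ commutes past the universal quantifier $\forall x$ by the Beck--Chevalley condition for right adjoints: the relevant square is the pullback of $\pr$ along $\angbr{\id_Y}{f}$, whose apex is $Y\times X$, so $P_{\angbr{\id_Y}{f}}(\theta)=\forall_{\pr_Y'}\bigl(P_g(\gamma)\bigr)$, where $\freccia{Y\times X}{g}{Y\times\mathrm{P}(X)\times X}$ is the induced map $(i,x)\mapsto(i,f(i),x)$ and $\freccia{Y\times X}{\pr_Y'}{Y}$ is the projection. Transposing along $P_{\pr_Y'}\dashv\forall_{\pr_Y'}$ turns $\top_Y\leq P_{\angbr{\id_Y}{f}}(\theta)$ into $\top_{Y\times X}\leq P_g(\gamma)$, i.e. the interpretation of $\in_X(x,f(i))\leftrightarrow\beta(x,i)$ equals the top element of $P(Y\times X)$. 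Since in a Heyting algebra $\top\leq(a\leftrightarrow b)$ forces $a=b$, this gives $\in_X(x,f(i))=\beta(x,i)$, which after transporting along the symmetry $Y\times X\cong X\times Y$ is precisely $\beta=P_{\id_X\times f}(\in_X)$. This establishes weak power objects, and hence that $P$ is a tripos. The only delicate point is the faithful passage from the internal-language form of (CA) to external inequalities, together with the bookkeeping of projections and substitutions needed to match the shape demanded by (RC); once the variables are aligned, the remaining steps are formal manipulations with adjoints, Beck--Chevalley, and the identity $\top\leq(a\leftrightarrow b)\iff a=b$.
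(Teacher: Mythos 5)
Your proposal is correct and follows essentially the same route as the paper's own proof: externalize the (CA) sentence to $\top_Y\leq\exists_{\pr_Y}(\cdots)$, apply (RC) to extract the arrow $f$, commute the substitution past the universal quantifier via Beck--Chevalley, transpose along $P_{\pr}\dashv\forall_{\pr}$, and conclude from $\top\leq(a\leftrightarrow b)$ that $P_{\id_X\times f}(\in_X)=\beta$. The only differences are cosmetic bookkeeping (variable ordering and naming of intermediate predicates), so there is nothing to flag.
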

\begin{proof}
Let $\alpha$ be an element of the fibre $P(X\times Y)$. Since $P$ satisfies (CA), we have that the inequality
   \[ \top_1\leq \forall_{!_Y} \exists_{\pr_Y}\forall_{\angbr{\pr_{ \mathrm{P}(X)}}{\pr_Y}}(P_{\angbr{\pr_X}{\pr_{\mathrm{P}(X)}}}(\in_X) \longleftrightarrow P_{\angbr{\pr_X}{\pr_Y}}(\alpha) )\]
 holds in $P(1)$, where the domain of the projections is $X\times \mathrm{P}(X)\times Y$. Hence the inequality
   \[ \top_Y\leq \exists_{\pr_Y}\forall_{\angbr{\pr_{ \mathrm{P}(X)}}{\pr_Y}}(P_{\angbr{\pr_X}{\pr_{\mathrm{P}(X)}}}(\in_X) \longleftrightarrow P_{\angbr{\pr_X}{\pr_Y}}(\alpha) )\]
holds in $P(Y)$. Now, since $P$ satisfies the Rule of Choice we have that there exists an arrow $\freccia{\mathrm{Y}}{f}{\mathrm{P}(X)}$ such that

\[ \top_Y\leq P_{\angbr{f}{\id_Y}}\forall_{\angbr{\pr_{ \mathrm{P}(X)}}{\pr_Y}}(P_{\angbr{\pr_X}{\pr_{\mathrm{P}(X)}}}(\in_X) \longleftrightarrow P_{\angbr{\pr_X}{\pr_Y}}(\alpha) ).\]
By (BCC), we have that
\[ \top_Y\leq \forall_{\pr_Y} P_{\angbr{\pr_X,f\pr_Y}{\pr_Y}}(P_{\angbr{\pr_X}{\pr_{\mathrm{P}(X)}}}(\in_X) \longleftrightarrow P_{\angbr{\pr_X}{\pr_Y}}(\alpha) ).\]
Therefore
\[ \top_{X\times Y}\leq  P_{\angbr{\pr_X,f\pr_Y}{\pr_Y}}(P_{\angbr{\pr_X}{\pr_{\mathrm{P}(X)}}}(\in_X) \longleftrightarrow P_{\angbr{\pr_X}{\pr_Y}}(\alpha) )\]
in $P(X\times Y)$.
Hence
\[\top_{X\times Y}\leq P_{\id_X\times f} (\in_X) \longleftrightarrow \alpha.\]
Thus, we can conclude that for every element $\alpha$ of $P(X\times Y)$ there exists an arrow $\freccia{Y}{\{\alpha\}:=f}{\mathrm{P}(X)}$ such that  $P_{\id_X\times \{\alpha\}} (\in_X) = \alpha$, i.e. that $P$ is a tripos.
\end{proof}
Combining this result with \Cref{thm:hyperdoctrine and CA iff T topos} we obtain the following useful corollary:
\begin{cor}\label{cor:P_ex_is_tripos_iff_its_ttt_is_ topos}
    Let us suppose that $\doctrine{\mC}{P^{\exists}}$ is a first-order hyperdoctrine. Then category $\sT_{P^{\exists}}$ is a topos if and only if $P^{\exists}$ is a tripos.
\end{cor}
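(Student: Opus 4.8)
The plan is to assemble three results already in hand: the characterization of when a tripos-to-topos is a topos, the fact that the tripos-to-topos of a tripos is always a topos, and the automatic validity of the Rule of Choice for full existential completions. Both directions follow almost immediately, so the proof is a matter of combining the right implications.

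For the backward direction, I would assume $P^{\exists}$ is a tripos and invoke \Cref{cor:tripos-to-tops of a tripos is topos} directly: the tripos-to-topos of any tripos is a topos. This direction uses nothing about $P^{\exists}$ being a full existential completion, only that it is a tripos.

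For the forward direction, suppose $\sT_{P^{\exists}}$ is a topos. Since by hypothesis $P^{\exists}$ is a first-order hyperdoctrine, \Cref{thm:hyperdoctrine and CA iff T topos} guarantees that $P^{\exists}$ satisfies the Comprehension Axiom (CA). The crucial extra ingredient is that $P^{\exists}$, being a full existential completion, automatically satisfies the Rule of Choice (RC) by \Cref{pro:comp_ex_sat_RC}. With both (CA) and (RC) available, I would then apply \Cref{prop: CA+RC implies tripos} to conclude that $P^{\exists}$ is a tripos.

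The only point requiring care is the role of the standing hypothesis that $P^{\exists}$ is a first-order hyperdoctrine: by \Cref{thm: compex P is existential and RC} the full existential completion of an arbitrary lex primary doctrine is always a full existential doctrine, but it need not be a first-order hyperdoctrine, so this assumption is exactly what licenses the use of \Cref{thm:hyperdoctrine and CA iff T topos} and \Cref{prop: CA+RC implies tripos}. There is no genuine obstacle here; the substance of the corollary is that, for full existential completions, the apparent gap between ``$\sT_{P^{\exists}}$ is a topos'' and ``$P^{\exists}$ is a tripos'' disappears precisely because (RC) comes for free.
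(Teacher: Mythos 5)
Your proposal is correct and follows essentially the same route as the paper, which obtains the corollary precisely by combining \Cref{thm:hyperdoctrine and CA iff T topos} (giving (CA) from the topos condition), \Cref{pro:comp_ex_sat_RC} (giving (RC) for free since $P^{\exists}$ is a full existential completion), and \Cref{prop: CA+RC implies tripos}, with the converse direction being immediate from \Cref{cor:tripos-to-tops of a tripos is topos}. Your closing remark correctly identifies the role of the standing first-order hyperdoctrine hypothesis, which is exactly the reading intended in the paper.
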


\section{Abstracting Localic Comparison Lemma for lex primary doctrines}
The main purpose of this section is to show that the equivalence
\[\sh{D(\loc)}\equiv \mathsf{PSh}(\loc)\]
obtained by applying the comparison lemma to the categories $D(\loc)$ and $\loc$,  equipped with a suitable coverage, see e.g. \cite[Ex. 2.2.4 (d)]{SAE}, can be generalized to an equivalence
\[ \sT_{P^{\exists}}\equiv \presh{P}.\]

To reach this goal, we need to recall and characterize the regular and exact completions of  doctrines.

\subsection{Preliminaries on the regular and exact completion of a doctrine}
We recall from \cite{UEC,TECH} the construction of a \emph{free regular category} from a full existential doctrine. This construction can be applied in the more general context of elementary and existential doctrines, but for the purpose of this work, we just need to recall the construction for full existential doctrines: 
\begin{definition}\label{def: Reg P}
	Let $\doctrine{\mC}{P}$ be a full existential doctrine. We define the category $\regdoctrine{P}$ as follows:
\begin{itemize}
	\item the \textbf{objects} of $\regdoctrine{P}$ are pairs $(A,\alpha)$, where $A$ is an object of $\mC$ and $\alpha$ is an element of $P(A)$;
	\item an \textbf{arrow} of $\regdoctrine{P}$ from $(A,\alpha)$ to $(B,\beta)$ is given by an element $\phi$ of $P(A\times B)$ such that:
	\begin{enumerate}
		\item $\phi \leq P_{\pr_A}(\alpha)\wedge P_{\pr_B}(\beta)$;
		\item $\alpha\leq \exists_{\pr_A}(\phi)$;
		\item $P_{\angbr{\pr_1}{\pr_2}}(\phi)\wedge P_{\angbr{\pr_1}{\pr_3}}(\phi)\leq P_{\angbr{\pr_2}{\pr_3}}(\delta_B)$.
	\end{enumerate}
	\end{itemize}

	The compositions of morphisms of $\regdoctrine{P}$ is given by the usual \emph{relational composition}: the composition of $\freccia{(A,\alpha)}{\phi}{(B,\beta)}$ and $\freccia{(B,\beta)}{\psi}{(C,\gamma)}$ is given by 
	\[ \exists_{\angbr{\pr_1}{\pr_3}}(P_{\angbr{\pr_1}{\pr_2}}(\gamma)\wedge  P_{\angbr{\pr_2}{\pr_3}}(\sigma))\]
	where $\pr_i$ are projections from $A\times B\times C$.
\end{definition}
\begin{definition}\label{def:regular completion full ex doctrine}
	Let $\doctrine{\mC}{P}$ be a full existential doctrine. 
	We call the category $\regdoctrine{P}$  the  \textbf{regular completion}  of $P$.
	\end{definition}
	 The choice of the name in \Cref{def:regular completion full ex doctrine} is justified by the following result, which is a specialization of \cite[Thm. 3.3]{TECH} for the case of full existential doctrines:
	\begin{theorem}\label{thm: regular completion full ex doctrine}
	Let $\doctrine{\mC}{P}$ be an elementary, existential doctrine. Then the category $\regdoctrine{P}$ is regular and the assignment $P\mapsto \regdoctrine{P}$ extends to a 2-functor
	\[\begin{tikzcd}
		\EED && \Reg
		\arrow[""{name=0, anchor=center, inner sep=0}, "\regdoctrine{-}",curve={height=-14pt}, from=1-1, to=1-3]
		\arrow[""{name=1, anchor=center, inner sep=0}, "\Sub_{(-)}",curve={height=-14pt}, hook',from=1-3, to=1-1]
		\arrow["\dashv"{anchor=center, rotate=-90}, draw=none, from=1, to=0]
	\end{tikzcd}\]
	which is left biadjoint to the inclusion of the 2-category $\Reg$ of regular categories in the 2-category $\EED$ of full  existential doctrines acting as $\mC\mapsto \Sub_{\mC}$.
	\end{theorem}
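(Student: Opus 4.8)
The plan is to deduce the statement from the general regular-completion theorem \cite[Thm.~3.3]{TECH} for elementary and existential doctrines, after verifying that full existential doctrines—together with full existential morphisms and doctrine transformations—form a sub-2-category of the 2-category of elementary existential doctrines on which that theorem restricts cleanly. So the real content of the proof is a sequence of compatibility checks rather than a fresh construction.

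First I would verify that every full existential doctrine $\doctrine{\mC}{P}$ is in particular elementary and existential. The existential structure is precisely the family of left adjoints $\exists_f$ along product projections, together with Beck--Chevalley and Frobenius, which are built into \Cref{def existential doctrine}; the elementary structure is recovered by the equality predicates $\delta_X:=\exists_{\angbr{\id_X}{\id_X}}(\top_X)$ of the remark following \Cref{def existential doctrine}, and one checks these satisfy the elementary-doctrine axioms using Frobenius and Beck--Chevalley. Next I would check that a full existential morphism is a morphism of elementary existential doctrines: it preserves the $\exists_f$ by definition, hence preserves each $\delta_X$ since $\delta_X$ is assembled from $\top$ and $\exists$, both preserved; and doctrine transformations are the same 2-cells in either setting. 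Since the construction of $\regdoctrine{P}$ in \Cref{def: Reg P} is literally the restriction of the construction in \cite{TECH}, the cited theorem then yields at once that $\regdoctrine{P}$ is regular and that $\regdoctrine{-}$ is a 2-functor.

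For the biadjunction I would first observe that the right adjoint is well defined into $\EED$: by \Cref{ex:sub doctrine}, for a regular category $\mathcal{D}$ the doctrine $\Sub_{\mathcal{D}}$ is full existential, so $\mathcal{D}\mapsto\Sub_{\mathcal{D}}$ factors the inclusion of $\Reg$ into the elementary existential doctrines through $\EED$. The unit is the comparison $P\to\Sub_{\regdoctrine{P}}$ sending $\alpha\in P(A)$ to the subobject named by $(A,\alpha)\rightarrowtail(A,\top_A)$. Because $\Sub_{(-)}$ already lands in $\EED$ and the $\EED$-morphisms between full existential doctrines coincide with the corresponding elementary-existential morphisms, the equivalence
\[\Reg(\regdoctrine{P},\mathcal{D})\;\simeq\;\EED(P,\Sub_{\mathcal{D}})\]
is obtained by restricting the biadjunction of \cite[Thm.~3.3]{TECH} along the inclusion $\EED\hookrightarrow(\text{elementary existential doctrines})$.

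The hard part will be the universal property underlying this last equivalence, namely extending a full existential morphism $P\to\Sub_{\mathcal{D}}$ to an essentially unique regular functor $\regdoctrine{P}\to\mathcal{D}$. The key mechanism—supplied by \cite{TECH} but worth isolating—is that in a regular category a functional relation (an entire, single-valued relation, which is exactly what conditions (1)--(3) of \Cref{def: Reg P} axiomatise internally) is the graph of a unique morphism: regularity of $\mathcal{D}$ is precisely what turns the syntactic relations of $\regdoctrine{P}$ into genuine arrows, while Beck--Chevalley guarantees that this assignment respects relational composition and preserves images. The other technical core, the pullback-stability of regular epimorphisms in $\regdoctrine{P}$, likewise rests on Beck--Chevalley in $P$; both are inherited verbatim from the cited theorem once the compatibility checks above are in place.
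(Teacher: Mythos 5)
Your proposal takes essentially the same route as the paper: the paper offers no independent proof, presenting the theorem precisely as a specialization of \cite[Thm.~3.3]{TECH}, which is your strategy of restricting the general regular-completion biadjunction for elementary existential doctrines along the inclusion of $\EED$. Your compatibility checks---that a full existential doctrine is elementary existential via $\delta_X=\exists_{\angbr{\id_X}{\id_X}}(\top_X)$, that full existential morphisms preserve this structure, and that $\Sub_{\mathcal{D}}$ lands in $\EED$ for regular $\mathcal{D}$ (\Cref{ex:sub doctrine})---are exactly the implicit content of that specialization, so the proof is correct.
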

The exact completion of a full existential doctrine is obtained combining the regular completion of a doctrine with the $\exreg{-}$ exact completion of a regular category introduced by Freyd in \cite{CA}. We recall from \cite{UEC}  the following result:
\begin{cor}\label{theorem maietti rosolini pasquali exact comp}
	The assignment $P\mapsto \exreg{\regdoctrine{P}}$ extends to a 2-functor from the 2-category $\EED$ of full existential doctrines to the 2-category $\Excat$ of exact categories, and it is left adjoint to the functor sending an exact category $\mC$ to doctrine $\Sub_{\mC}$ of its subobjects.
	\end{cor}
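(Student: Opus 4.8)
The plan is to realise the claimed 2-functor and its adjunction as a composite of two biadjunctions that are already available. The first is provided by \Cref{thm: regular completion full ex doctrine}: the regular completion gives a 2-functor $\regdoctrine{-}\colon \EED \to \Reg$ that is left biadjoint to the inclusion $\Sub_{(-)}\colon \Reg \to \EED$ sending a regular category to its subobjects doctrine. The second is Freyd's exact-on-regular completion from \cite{CA}: I would recall that $\exreg{-}$ is 2-functorial on regular functors and their natural transformations, yielding a 2-functor $\exreg{-}\colon \Reg \to \Excat$, and that it is left biadjoint to the forgetful inclusion $\Excat \hookrightarrow \Reg$ of exact categories (and exact functors) among the regular ones.

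Granting these, the argument is purely formal. First I would note that $\exreg{\regdoctrine{-}}$ is a 2-functor, being a composite of 2-functors. Then, since a composite of left biadjoints is again a left biadjoint whose right biadjoint is the composite of the individual right biadjoints taken in the opposite order, the 2-functor $\exreg{\regdoctrine{-}}\colon \EED \to \Excat$ is left biadjoint to the composite
\[ \Excat \hookrightarrow \Reg \xrightarrow{\;\Sub_{(-)}\;} \EED. \]
The unit and counit of this composite biadjunction are obtained by pasting the units and counits of the two factors, and the triangle identities reduce to those of the factors. This matches the phrasing ``left adjoint'' in the statement, read in the 2-categorical (bi-)sense.

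Finally I would identify the composite right biadjoint explicitly. For an exact category $\mE$, the forgetful inclusion returns $\mE$ regarded as a regular category, and $\Sub_{(-)}$ then produces the doctrine $\doctrine{\mE}{\Sub_{\mE}}$ of its subobjects, which by \Cref{ex:sub doctrine} indeed lies in $\EED$ precisely because $\mE$ is regular. Hence the right biadjoint is exactly the assignment $\mE \mapsto \Sub_{\mE}$ announced in the corollary, as recalled from \cite{UEC}.

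The only step that is not bookkeeping is the claim that Freyd's exact completion is a left biadjoint 2-functor, i.e. that its classical one-dimensional universal property upgrades to a genuine 2-categorical biadjunction over $\Reg$; I would invoke this from the standard theory of exact completions of regular categories rather than reprove it. Everything else is the general principle that left biadjoints compose, together with the elementary unwinding of the composite right biadjoint performed above.
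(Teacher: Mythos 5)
Your proposal is correct and takes essentially the same route as the paper: the text introducing this corollary states that the exact completion of a full existential doctrine is obtained by combining the regular completion biadjunction of \Cref{thm: regular completion full ex doctrine} with Freyd's $\exreg{-}$ completion of a regular category, and the result itself is recalled from \cite{UEC} rather than reproved, so your composition of the two left biadjoints, with composite right biadjoint $\mE\mapsto \Sub_{\mE}$, is exactly the intended argument. The single step you defer to the literature---that Freyd's ex/reg completion is left biadjoint at the 2-categorical level to the inclusion $\Excat\hookrightarrow\Reg$---is likewise the step the paper outsources to \cite{CA,UEC}.
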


The following example appears in \cite{TECH}. 
\begin{example}\label{cor: regular come  ex lex}
The regular  completion  $\reglex{\mC}$ of category with finite limits $\mC$ is equivalent to the regular completion $\regdoctrine{\Psi_{\mC}}$ of the lex primary  doctrine $\doctrine{\mC}{\Psi_{\mC}}$ of weak subobjects of $\mC$. Hence, the exact completion  $\exlex{\mC}$ of a category $\mC$  is equivalent to the exact completion of the doctrine $\doctrine{\mC}{\Psi_{\mC}}$ of weak subobjects of $\mC$.
\end{example}
We recall from  \cite[Sec. 2]{UEC}:
	\begin{theorem}\label{teorema T_P=reg(P)_exreg}
	 For any  full existential doctrine $\doctrine{\mC}{P}$, the category $\sT_P$ is exact and the following equivalence holds $\sT_{P}\equiv \exreg{\regdoctrine{P}}$.
	\end{theorem}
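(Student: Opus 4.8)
The plan is to establish the equivalence by unwinding both sides into the relational calculus of $P$ and exhibiting an explicit comparison functor; exactness of $\sT_P$ then follows for free, since $\exreg{(-)}$ always produces an exact category. Concretely, I would build a functor $\Phi\colon \sT_P \to \exreg{\regdoctrine{P}}$ and prove it is full, faithful and essentially surjective. Throughout I would keep in reserve the two universal properties already recorded — that $\regdoctrine{(-)}$ is the free regular category on a full existential doctrine (\Cref{thm: regular completion full ex doctrine}) and that $P\mapsto \exreg{\regdoctrine{P}}$ is the free exact category on such a doctrine (\Cref{theorem maietti rosolini pasquali exact comp}) — as an independent cross-check that the comparison must be an equivalence.

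First I would pin down subobjects inside $\regdoctrine{P}$. The key lemma is that for an object $(A,\alpha)$ the poset $\Sub_{\regdoctrine{P}}((A,\alpha))$ is isomorphic to the principal downset $\{\beta \in P(A) \mid \beta \leq \alpha\}$, a monomorphism $(A,\beta)\hookrightarrow (A,\alpha)$ being represented by the relation $\delta_A \wedge P_{\pr_1}(\beta)$. Granting this, an equivalence relation on $(A,\alpha)$ in the regular category $\regdoctrine{P}$ is a subobject of $(A,\alpha)\times(A,\alpha) \cong (A\times A,\, P_{\pr_1}(\alpha)\wedge P_{\pr_2}(\alpha))$, hence an element $\rho \in P(A\times A)$ with $\rho \leq P_{\pr_1}(\alpha)\wedge P_{\pr_2}(\alpha)$; reflexivity forces $P_{\Delta_A}(\rho)=\alpha$, while symmetry and transitivity are exactly the two object conditions of $\sT_P$. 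This sets up the object assignment $\Phi(A,\rho):=\big((A,\,P_{\Delta_A}(\rho)),\,\rho\big)$ and, reversing the translation, shows every object of $\exreg{\regdoctrine{P}}$ is isomorphic to one in the image of $\Phi$, giving essential surjectivity.

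On morphisms I would send $\freccia{(A,\rho)}{\phi}{(B,\sigma)}$ to the same $\phi \in P(A\times B)$, now read as a functional relation from the equivalence relation $\rho$ on $(A,P_{\Delta_A}(\rho))$ to $\sigma$ on $(B,P_{\Delta_B}(\sigma))$ inside $\exreg{\regdoctrine{P}}$. The point to check is that the five conditions (i)--(v) defining an arrow of $\sT_P$ are precisely the conditions making $\phi$ a strict, single-valued and total relation between these equivalence relations, and that the relational composition of $\exreg{\regdoctrine{P}}$ matches the composition law of $\sT_P$ (both being $\exists_{\angbr{\pr_1}{\pr_3}}$ of a conjunction of reindexings). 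Since a morphism of $\exreg{\regdoctrine{P}}$ is such a functional relation taken up to the ambient equivalence relations, and in $\sT_P$ the arrow is recorded by $\phi$ up to the order of $P$, fullness and faithfulness reduce to the same bookkeeping; matching identities (carried by $\rho$ itself) then completes the proof that $\Phi$ is an equivalence.

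The main obstacle is concentrated in the second step: establishing the subobject identification $\Sub_{\regdoctrine{P}}((A,\alpha)) \cong \{\beta \mid \beta\le\alpha\}$ and, through it, that every equivalence relation in $\regdoctrine{P}$ descends from a genuine symmetric and transitive $\rho$ in $P$ — this is what secures essential surjectivity, and it is here that the Frobenius and Beck--Chevalley conditions on $P$ are actually used. Once that translation is in place, conditions (i)--(v) and the composition law transcribe mechanically, and the independent universal-property argument (two left biadjoints to $\mathcal{E}\mapsto \Sub_{\mathcal{E}}$ must agree up to equivalence) confirms that no hidden coherence issue can obstruct the conclusion $\sT_P \equiv \exreg{\regdoctrine{P}}$.
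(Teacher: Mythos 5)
You should first note that the paper does not actually prove \Cref{teorema T_P=reg(P)_exreg}: it is recalled verbatim from \cite{UEC} (``We recall from [Sec.~2]{UEC}''), so there is no in-paper proof to match; the comparison must be with the argument in the literature. Measured against that, your proposal is a sound reconstruction of essentially the known proof. The dictionary you set up is correct: for a full existential doctrine the equality predicate $\delta_A=\exists_{\Delta_A}(\top_A)$ is available, your reflexivity computation is right (applying $P_{\Delta_A}$ to $\delta_A\wedge P_{\pr_1}(\alpha)\leq\rho$ gives $\alpha\leq P_{\Delta_A}(\rho)$, the converse inequality coming from $\rho\leq P_{\pr_1}(\alpha)\wedge P_{\pr_2}(\alpha)$, and the converse direction $\delta_A\wedge P_{\pr_1}(P_{\Delta_A}\rho)\leq\rho$ follows from Frobenius plus the counit $\exists_{\Delta_A}P_{\Delta_A}(\rho)\leq\rho$), and conditions (i)--(v) of $\sT_P$ do transcribe exactly into strict, saturated, single-valued, total relations between the equivalence relations $\rho$ and $\sigma$ in $\exreg{\regdoctrine{P}}$, with matching relational composition. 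You also correctly locate where all the substance lies: the identification $\Sub_{\regdoctrine{P}}(A,\alpha)\cong\{\beta\in P(A)\mid\beta\leq\alpha\}$, i.e.\ the fact that the regular completion realizes comprehensions and that every mono $(C,\gamma)\rightarrowtail(A,\alpha)$ is isomorphic to a canonical one via its image $\exists$-predicate. That lemma is true and is precisely what is established in \cite{UEC,TECH}, but in your write-up it is granted rather than proved, so your proposal is a faithful plan rather than a complete argument; a full proof would have to carry out that image-factorization analysis in $\regdoctrine{P}$. Two small caveats: your ``independent cross-check'' via universal properties is not actually independent within this paper, since the paper records the biadjunction only for $P\mapsto\exreg{\regdoctrine{P}}$ (\Cref{theorem maietti rosolini pasquali exact comp}) and never states a universal property of $\sT_P$ itself --- that too comes from \cite{UEC}, so invoking it would be circular here; and note that, despite the wording of \Cref{def existential doctrine}, the construction of $\delta_A$ and your use of Beck--Chevalley along the maps $\langle\pr_1,\pr_3\rangle$ etc.\ presuppose left adjoints along arbitrary arrows, which is the reading of ``full existential'' in \cite{MaiettiTrotta21} that your argument (correctly) relies on.
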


These results can be combined with Menni's characterization of lex categories whose exact completions are toposes \cite{MENNI2003287} to show that having weak dependent products and a generic proof are not only sufficient, but also necessary for making $\doctrine{\mC}{{\Psi_{\mC}}}$ a tripos:

\begin{theorem}[Menni \cite{MENNI2003287}]\label{cor: Menni chararcterisation}
	Let $\mC$ be a category with finite limits. Then $\exlex{\mC}$ is a topos if and only if $\mC$ has weak dependent products and a generic proof.
	\end{theorem}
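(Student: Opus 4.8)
The strategy is to move the statement across the equivalence $\exlex{\mC}\equiv\sT_{\Psi_\mC}$ recorded in \Cref{ex: exact comp of weak sub 1}, reducing ``$\exlex{\mC}$ is a topos'' to ``$\sT_{\Psi_\mC}$ is a topos'', and then to read the right-hand condition off the tripos structure of the weak-subobjects doctrine. The recurring technical asset is that $\Psi_\mC=\compexfull{\top}$ by \Cref{ex:weak sub is full ex comp}, so $\Psi_\mC$ is a full existential completion and automatically satisfies the Rule of Choice by \Cref{pro:comp_ex_sat_RC}. This is exactly what frees me from verifying (CA) and (RC) by hand and lets me invoke \Cref{cor:P_ex_is_tripos_iff_its_ttt_is_ topos}, which applies to $\Psi_\mC$ since the latter has the form $P^{\exists}$.

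For sufficiency, suppose $\mC$ has weak dependent products and a generic proof. Weak dependent products equip $\Psi_\mC$ with universal quantifiers and implication, i.e.\ with the structure of a full first-order hyperdoctrine, and, taking the product projection towards the terminal object, make $\mC$ weakly cartesian closed. A generic proof is equivalent to $\Psi_\mC$ having a weak predicate classifier, so by \Cref{rem:weak cart closed power ob iff pred clas} the doctrine acquires weak power objects. Hence $\Psi_\mC$ is a full tripos, and \Cref{cor:tripos-to-tops of a tripos is topos} yields that $\sT_{\Psi_\mC}\equiv\exlex{\mC}$ is a topos.

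For necessity, suppose $\exlex{\mC}$ is a topos. The first task is to recover weak dependent products: being a topos, $\exlex{\mC}$ is locally cartesian closed, and local cartesian closure of an exact completion is equivalent to the base category having weak dependent products (\cite{BirkedalCarboniRosolini,MENNI2003287}). With weak dependent products in hand, $\Psi_\mC$ is a full first-order hyperdoctrine, so \Cref{cor:P_ex_is_tripos_iff_its_ttt_is_ topos} applies and forces $\Psi_\mC$ --- whose tripos-to-topos is the topos $\exlex{\mC}$ --- to be a tripos. In particular $\Psi_\mC$ has weak power objects, hence a weak predicate classifier, which is equivalent to $\mC$ having a generic proof; this delivers the second condition.

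The crux is the opening move of necessity: extracting weak dependent products purely from the hypothesis that $\exlex{\mC}$ is a topos. This step cannot be supplied by the corollaries assembled above, since those already presuppose a first-order hyperdoctrine; it must instead come from the external characterisation of local cartesian closure of exact completions. It is precisely this bootstrapping --- converting the categorical topos hypothesis into the doctrinal hypothesis that unlocks \Cref{cor:P_ex_is_tripos_iff_its_ttt_is_ topos} --- where Menni's analysis in \cite{MENNI2003287} does the essential work, the remaining implications being a direct assembly of the results recalled above.
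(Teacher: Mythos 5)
First, note a mismatch of genre: the paper offers no proof of this statement at all --- it is quoted as Menni's theorem from \cite{MENNI2003287} and used as a black box (indeed, the paper derives \Cref{thm:weak suobject tripos} \emph{from} it, whereas you run the derivation in the opposite direction). Your sufficiency direction is a genuine reconstruction and is essentially sound in outline, and your use of \Cref{cor:P_ex_is_tripos_iff_its_ttt_is_ topos} is not circular, since \Cref{prop: CA+RC implies tripos} and \Cref{thm:hyperdoctrine and CA iff T topos} do not depend on Menni's result. But your necessity direction does not prove anything: its one substantive step --- extracting weak dependent products of $\mC$ from the hypothesis that $\exlex{\mC}$ is a (locally cartesian closed) topos --- is outsourced to \cite{BirkedalCarboniRosolini,MENNI2003287}, i.e.\ to the very result under proof, as you yourself concede. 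A self-contained argument would use that $\mC$ sits inside $\exlex{\mC}$ as projectives covering every object, so that genuine dependent products in the topos can be covered by projectives to yield weak dependent products in $\mC$; and you should be aware that the Carboni--Rosolini equivalence you cite is known to require Emmenegger's correction (\cite{EMMENEGGER}, cf.\ \Cref{rem:weak dep prod implies slice weak dep prod}), although the direction you need is the unproblematic one.

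There is also a concrete mathematical gap in both directions: the claim that weak dependent products equip $\Psi_{\mC}$ ``with the structure of a full first-order hyperdoctrine'' is false against \Cref{def:hyperdoctrine}, which demands Heyting-algebra fibres (finite joins and a bottom element, preserved by reindexing) and Beck--Chevalley for both adjoints. Weak dependent products only make $\Psi_{\mC}$ universal and implicational, exactly as the paper states. In the sufficiency direction the missing connectives must be manufactured impredicatively from the weak predicate classifier, e.g.\ $\bot:=\forall_{s\colon\Omega}\,s$ and $\alpha\vee\beta:=\forall_{s\colon\Omega}\bigl(((\alpha\to s)\wedge(\beta\to s))\to s\bigr)$, with Beck--Chevalley for $\forall$ secured via the slice-stability of weak dependent products. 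In the necessity direction this repair is unavailable at the point where you invoke \Cref{cor:P_ex_is_tripos_iff_its_ttt_is_ topos}, because the generic proof is precisely what you have not yet obtained; there you would instead need the standard identification $\Psi_{\mC}(A)\cong\Sub_{\exlex{\mC}}(A)$ for $A$ in $\mC$ to import the Heyting structure from the topos. Neither repair appears in your text, so the proposal as written does not establish the hyperdoctrine hypotheses on which both of your key corollary invocations rest.
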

	Combining this result with \Cref{ex: exact comp of weak sub 1} and \Cref{rem:weak cart closed power ob iff pred clas}, we obtain the following corollary:
	\begin{cor}\label{thm:weak suobject tripos}
		Let $\mC$ be category with finite limits. Then the weak subobject doctrine $\doctrine{\mC}{{\Psi_{\mC}}}$ is a full tripos if and only if $\mC$ has weak dependent products and a generic proof.
		\end{cor}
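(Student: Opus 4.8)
The plan is to establish the two implications separately, using throughout the equivalence $\sT_{\Psi_{\mC}}\equiv\exlex{\mC}$ recorded in \Cref{ex: exact comp of weak sub 1}. For the ``only if'' direction, suppose $\doctrine{\mC}{\Psi_{\mC}}$ is a full tripos. Then by \Cref{cor:tripos-to-tops of a tripos is topos} the category $\sT_{\Psi_{\mC}}$ is a topos, and since $\sT_{\Psi_{\mC}}\equiv\exlex{\mC}$, the exact completion $\exlex{\mC}$ is a topos as well; Menni's characterization (\Cref{cor: Menni chararcterisation}) then yields at once that $\mC$ has weak dependent products and a generic proof. This is the short direction and requires no further work beyond chaining the three cited facts.

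For the ``if'' direction I would build the full tripos structure on $\Psi_{\mC}$ directly from the two hypotheses, rather than going back through Menni. Weak dependent products make $\doctrine{\mC}{\Psi_{\mC}}$ universal and implicational, as recalled above, supplying the right adjoints $\forall_{f}$ and the implication on the fibres, while the left adjoints $\exists_{f}$ are always available by post-composition (\Cref{ex:weak sub doctrine}); a generic proof on $\mC$ is, as observed above, exactly what equips $\Psi_{\mC}$ with a weak predicate classifier $\Omega$. Since weak dependent products in particular make $\mC$ weakly cartesian closed, \Cref{rem:weak cart closed power ob iff pred clas} upgrades this weak predicate classifier to a full structure of weak power objects. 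It then remains only to verify that $\Psi_{\mC}$ is a genuine full first-order hyperdoctrine in the sense of \Cref{def:hyperdoctrine}, after which \Cref{def:tripos} identifies it as a full tripos.

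The main obstacle is precisely this last verification: \Cref{def:hyperdoctrine} demands that each fibre $\Psi_{\mC}(A)$ be a Heyting algebra, whereas weak dependent products directly deliver only the meets, the top, the implication and the universal quantifiers. The missing finite joins and the bottom element I would recover impredicatively from the data already in hand, using the weak predicate classifier $\Omega$ together with $\forall$ and $\to$, via the standard second-order encodings $\bot:=\forall\omega\!:\!\Omega.\,\omega$ and $\alpha\vee\beta:=\forall\omega\!:\!\Omega.\,(\alpha\to\omega)\to((\beta\to\omega)\to\omega)$, where $\omega$ is read as a proposition through the generic predicate $\in$. Checking that these encodings satisfy the Heyting laws, that every reindexing $\Psi_{\mC}(f)$ is a morphism of Heyting algebras, and that the quantifiers obey the Beck--Chevalley conditions is the real content of the argument; once it is in place, $\Psi_{\mC}$ is a full first-order hyperdoctrine with weak power objects, hence a full tripos, which closes the equivalence.
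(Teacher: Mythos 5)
Your proposal is correct and follows essentially the same route as the paper, whose proof is exactly the combination you chain together: the ``only if'' direction via \Cref{cor:tripos-to-tops of a tripos is topos}, the equivalence $\sT_{\Psi_{\mC}}\equiv\exlex{\mC}$ of \Cref{ex: exact comp of weak sub 1}, and Menni's characterization (\Cref{cor: Menni chararcterisation}); and the ``if'' direction by directly assembling the tripos structure (universal quantifiers and implication from weak dependent products, the weak predicate classifier from the generic proof, and weak power objects via weak cartesian closure and \Cref{rem:weak cart closed power ob iff pred clas}). Your closing paragraph on recovering the missing joins and bottom by the impredicative encodings $\bot:=\forall\omega\,.\,\omega$ and $\alpha\vee\beta:=\forall\omega\,.\,(\alpha\to\omega)\to((\beta\to\omega)\to\omega)$ is the standard and correct way to complete the Heyting structure on the fibres; the paper simply leaves this verification implicit, so you are making explicit a step it glosses rather than diverging from it.
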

	
\subsection{Abstracting the localic Comparison Lemma}

The first result we are going to show is that the regular completion of the full existential completion $\compexfull{P}$ of  a lex primary doctrine $\doctrine{\mC}{P}$ is equivalent to the  regular category associated to the category of points $\mG_{P}$ of $P$
$$\regdoctrine{\compexfull{P}}\equiv \reglex{\mG_{P}}.$$

Observe now that there is a strong link between the fibres of a full existential completion of a doctrine $P$  and
the weak subobject doctrine of the Grothendieck construction of $P$.
To this purpose, let us denote by $\freccia{\mC}{I_{\mC}}{\mG_P}$ be the canonical functor sending $A\mapsto I_{\mC}(A):=(A,\top)$.

\begin{lemma}\label{thm: full existential doctrine as weak subobject}
	Let $\doctrine{\mC}{P}$ be a lex primary doctrine. Then $\compexfull{P}	= \Psi_{\mG_{P}}\circ I_{\mC}$.
	\end{lemma}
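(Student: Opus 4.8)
The plan is to verify the claimed equality fibrewise and then on reindexing maps, checking that both sides agree literally (up to the evident identification of underlying data) rather than merely up to equivalence. Fix an object $A$ of $\mC$. By definition $(\Psi_{\mG_P}\circ I_{\mC})(A)=\Psi_{\mG_P}(A,\top)$ is the poset reflection of the slice category $\mG_P/(A,\top)$, whereas $\compexfull{P}(A)$ is the poset reflection of the preorder of pairs $(\frecciasopra{B}{f}{A},\alpha)$ with $\alpha\in P(B)$. The crucial remark driving the whole proof is that, since each reindexing $P_f$ preserves the top element, a morphism $(B,\beta)\to(A,\top)$ in $\mG_P$ amounts to an arrow $\freccia{B}{f}{A}$ of $\mC$ subject only to the condition $\beta\leq P_f(\top)=\top$, which is vacuous. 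Hence the objects of $\mG_P/(A,\top)$ are exactly the pairs $(\frecciasopra{B}{f}{A},\beta)$ with $\beta\in P(B)$ arbitrary.

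This already identifies the underlying classes of objects of the two preorders. Next I would match the orders: a morphism in $\mG_P/(A,\top)$ from $(\frecciasopra{B}{f}{A},\beta)$ to $(\frecciasopra{C}{g}{A},\gamma)$ is an arrow $\freccia{(B,\beta)}{h}{(C,\gamma)}$ of $\mG_P$ commuting over $(A,\top)$, i.e. an arrow $\freccia{B}{h}{C}$ of $\mC$ with $f=gh$ and $\beta\leq P_h(\gamma)$. This is precisely the defining order relation of the full existential completion. Since the two preorders have the same objects and the same order, their poset reflections are literally the same, so $\compexfull{P}(A)=\Psi_{\mG_P}(A,\top)$ as posets.

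It remains to check that the two functors agree on reindexing, and this is where the one genuinely computational point lies: I must pin down finite limits, and in particular pullbacks, in $\mG_P$. Using \Cref{prop: G_P has finite limits}, the pullback in $\mG_P$ of the object $(\frecciasopra{C}{g}{B},\beta)\colon(C,\beta)\to(B,\top)$ along $I_{\mC}(f)\colon(A,\top)\to(B,\top)$ is carried by the pullback $D$ of $f$ and $g$ in $\mC$, and its predicate is the largest one making both projections morphisms of $\mG_P$; since the projection to $(A,\top)$ imposes no condition and the projection to $(C,\beta)$ forces the predicate below $P_{g^*f}(\beta)$, this largest predicate is exactly $P_{g^*f}(\beta)$. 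Reading the result as an object of the slice over $(A,\top)$ gives $(\frecciasopra{D}{f^*g}{A},\,P_{g^*f}(\beta))$, which is by definition $\compexfull{P}_f(\frecciasopra{C}{g}{B},\beta)$. Hence $\Psi_{\mG_P}(I_{\mC}(f))=\compexfull{P}_f$ on representatives, and the equality of functors follows. The main obstacle is therefore not conceptual but the careful identification of the predicate attached to a pullback in $\mG_P$; once the description of limits in the category of points is fixed, everything else is a direct matching of definitions.
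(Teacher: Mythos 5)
Your proof is correct, and it coincides with the paper's intended argument: the paper states this lemma without proof, treating it as a direct unwinding of definitions, which is exactly what you carry out — the fibrewise identification (using that reindexing preserves $\top$, so slicing over $(A,\top)$ imposes no condition) together with the computation that a pullback along $I_{\mC}(f)$ in $\mG_P$ carries the predicate $P_{g^*f}(\beta)$, matching $\compexfull{P}_f$. Your explicit verification of the pullback predicate via \Cref{prop: G_P has finite limits} is precisely the one computational point the paper leaves implicit.
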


\begin{lemma}\label{lem: presentazione oggeti reglex Grothendieck}
	Let $\doctrine{\mC}{P}$ be a lex primary doctrine. Every object of $\reglex{\mG_P}$ is isomorphic to one of the form $\freccia{(A,\alpha)}{f}{(B,\top_B)}$.
	\end{lemma}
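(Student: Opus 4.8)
The plan is to exploit the identification $\reglex{\mG_P}\equiv\regdoctrine{\Psi_{\mG_P}}$ from \Cref{cor: regular come  ex lex}, under which every object of $\reglex{\mG_P}$ is presented by an arrow of $\mG_P$: an object $\freccia{(A,\alpha)}{g}{(B,\beta)}$ is interpreted inside the regular category $\reglex{\mG_P}$ as the image $\im(g)$ of the embedded arrow $g$, a subobject of $(B,\beta)$. Since $\mG_P$ has finite limits by \Cref{prop: G_P has finite limits}, this is well posed. Thus it suffices to show that every such object is isomorphic to the image of an arrow whose codomain has the form $(B,\top_B)$, and the whole argument reduces to a single observation about how to absorb the predicate $\beta$ into the top element.

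The key observation is that the underlying identity defines a morphism $\freccia{(B,\beta)}{\id_B}{(B,\top_B)}$ in $\mG_P$, since $\beta\leq\top_B=P_{\id_B}(\top_B)$ holds trivially, and that this morphism is a monomorphism: any two parallel arrows into $(B,\beta)$ that become equal after composing with $\id_B$ already agree on their underlying $\mC$-arrows, hence coincide in $\mG_P$. As the embedding $\mG_P\hookrightarrow\reglex{\mG_P}$ preserves finite limits, it preserves monomorphisms, so $\id_B$ remains monic in $\reglex{\mG_P}$.

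Given an arbitrary object $\freccia{(A,\alpha)}{g}{(B,\beta)}$ of $\reglex{\mG_P}$, I post-compose $g$ with this mono to obtain the arrow $\freccia{(A,\alpha)}{g}{(B,\top_B)}$, whose underlying $\mC$-arrow is again $g$. In the regular category $\reglex{\mG_P}$ post-composition with a monomorphism leaves the image unchanged up to isomorphism: if $g=i\circ e$ is the regular epi--mono factorization, then $\id_B\circ g=(\id_B\circ i)\circ e$ is again such a factorization, so $\im(\id_B\circ g)\cong\im(g)$. Therefore the original object $\im(g)$ is isomorphic to $\im(g\colon(A,\alpha)\to(B,\top_B))$, which is precisely an object of the desired form $\freccia{(A,\alpha)}{f}{(B,\top_B)}$. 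The only point requiring care is the identification of objects of $\reglex{\mG_P}$ with images of arrows of $\mG_P$ together with the invariance of this image under the canonical monomorphism $(B,\beta)\rightarrowtail(B,\top_B)$; once these are in place the statement is immediate. (Alternatively, one may verify the isomorphism directly in $\regdoctrine{\Psi_{\mG_P}}$ by pushing $[g]$ forward along $\id_B$, but the image-theoretic argument is cleaner.)
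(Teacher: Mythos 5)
Your proof is correct, but it takes a genuinely different route from the paper's. The paper argues directly from the definition of morphisms in the $\mathsf{reg}/\mathsf{lex}$-completion: given an object $\freccia{(A,\alpha)}{f}{(B,\beta)}$, it exhibits the equivalence class $[\id_{(A,\alpha)}]$ as an explicit morphism to $\freccia{(A,\alpha)}{f}{(B,\top_B)}$ and checks that it is an isomorphism (the verification is immediate because the kernel pair of $f$ in $\mG_P$ does not depend on the predicate on the codomain, so the compatibility condition defining arrows of $\reglex{\mG_P}$ holds trivially in both directions). You instead never unwind the hom-sets: you use the standard characterization of objects of the regular completion as images of embedded arrows, observe that $\freccia{(B,\beta)}{\id_B}{(B,\top_B)}$ is a mono in $\mG_P$ preserved by the lex embedding into $\reglex{\mG_P}$, and conclude via invariance of image factorizations under postcomposition with monos, $\im(\id_B\circ g)\cong\im(g)$. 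All three ingredients are sound: monos in $\mG_P$ are detected on underlying $\mC$-arrows, the embedding preserves finite limits hence monos, and if $g=i\circ e$ is a regular epi--mono factorization then so is $\id_B\circ g=(\id_B\circ i)\circ e$. There is also no circularity in your appeal to \Cref{cor: regular come  ex lex}, which does not depend on this lemma (and is in fact dispensable, since your image argument lives in $\reglex{\mG_P}$ directly). What each approach buys: the paper's check is self-contained and elementary, producing the inverse pair of morphisms by hand, while yours is more conceptual and makes transparent \emph{why} the statement holds---the predicate $\beta$ is absorbed because $(B,\beta)\rightarrowtail(B,\top_B)$ is monic and images ignore monos---at the cost of invoking the (standard, but nontrivial) fact from \cite{REC,SFEC} that every object $f$ of $\reglex{\mD}$ is the image of $\Gamma f$ under the embedding $\Gamma\colon\mD\to\reglex{\mD}$.
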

	
    \begin{proof}
    If follows by the definition of arrows in the $\mathsf{reg}/\mathsf{lex}$-completion \cite{REC,SFEC}. Indeed, if we consider an object $\frecciasopra{(A,\alpha)}{f}{(B,\beta)}$ of the category $\reglex{\mG_P}$, then it is direct to check that 
\[\begin{tikzcd}
	{(A,\alpha)} & {(B,\beta)} \\
	{(A,\alpha)} & {(B,\top)}
	\arrow[""{name=0, anchor=center, inner sep=0}, "f", from=1-1, to=1-2]
	\arrow[""{name=1, anchor=center, inner sep=0}, "f"', from=2-1, to=2-2]
	\arrow["{[\id]}", shorten <=4pt, shorten >=4pt, from=0, to=1]
\end{tikzcd}\]
	is an arrow of $\reglex{\mG_P}$ and that is an isomorphism.
    
	\end{proof}
Combining \Cref{lem: presentazione oggeti reglex Grothendieck} with \Cref{cor: regular come  ex lex} we obtain the following corollary:
	\begin{cor}\label{cor: oggetti reg weak sub su grot. P}
		Let us consider the lex doctrine $\doctrine{\mG_P}{\Psi_{\mG_P}}$, where $\doctrine{\mC}{P}$ is a lex primary doctrine. Then for every object $((B,\beta),f)$ of $\regdoctrine{\Psi_{\mG_P}}$ is isomorphic to one of the form  $((B,\top),f)$.
	\end{cor}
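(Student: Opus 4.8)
The plan is to combine the structural description of objects in $\reglex{\mG_P}$ provided by \Cref{lem: presentazione oggeti reglex Grothendieck} with the identification of $\regdoctrine{\Psi_{\mG_P}}$ as $\reglex{\mG_P}$ supplied by \Cref{cor: regular come  ex lex}. Recall that in \Cref{def: Reg P} an object of $\regdoctrine{\Psi_{\mG_P}}$ is a pair consisting of an object $(B,\beta)$ of $\mG_P$ together with an element of the fibre $\Psi_{\mG_P}((B,\beta))$; since $\Psi_{\mG_P}((B,\beta))$ is the poset reflection of the slice category $\mG_P/(B,\beta)$, such an element is represented by an arrow $f$ into $(B,\beta)$. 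The notation $((B,\beta),f)$ in the statement is precisely such a pair, so the claim is that every object of $\regdoctrine{\Psi_{\mG_P}}$ is isomorphic to one whose second coordinate has codomain of the shape $(B,\top)$.

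First I would make explicit the equivalence $\reglex{\mG_P}\equiv\regdoctrine{\Psi_{\mG_P}}$ from \Cref{cor: regular come  ex lex}, under which an object $((B,\beta),f)$ of $\regdoctrine{\Psi_{\mG_P}}$ corresponds to the object $\frecciasopra{(A,\alpha)}{f}{(B,\beta)}$ of $\reglex{\mG_P}$, where $(A,\alpha)$ is the domain of the representing map $f$. Under this correspondence the desired isomorphism becomes a statement purely inside $\reglex{\mG_P}$. Then I would simply invoke \Cref{lem: presentazione oggeti reglex Grothendieck}, which already asserts that every object of $\reglex{\mG_P}$ is isomorphic to one of the form $\frecciasopra{(A,\alpha)}{f}{(B,\top_B)}$. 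Transporting this isomorphism back across the equivalence yields that $((B,\beta),f)$ is isomorphic in $\regdoctrine{\Psi_{\mG_P}}$ to $((B,\top),f)$, which is exactly what is claimed.

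The only genuine content is verifying that the isomorphism constructed in the proof of \Cref{lem: presentazione oggeti reglex Grothendieck}—given there by the $\reglex{}$-arrow $[\id]$ between the two cospans with common domain $(A,\alpha)$ and codomains $(B,\beta)$ and $(B,\top)$—is transported correctly through the equivalence of \Cref{cor: regular come  ex lex} to an isomorphism in $\regdoctrine{\Psi_{\mG_P}}$. This is routine, since the equivalence is the identity on objects up to the translation between slice-representatives and pairs, and it preserves isomorphisms. Thus the corollary follows immediately, with essentially no new computation.

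I expect the main (and only mild) obstacle to be bookkeeping: keeping track of how an object $((B,\beta),f)$ of $\regdoctrine{\Psi_{\mG_P}}$ unwinds into a cospan representative in $\reglex{\mG_P}$, so that the abstract isomorphism of \Cref{lem: presentazione oggeti reglex Grothendieck} is pinned down to the concrete normal form $((B,\top),f)$. Once the dictionary between the two descriptions is fixed, the statement is a direct corollary, exactly as the excerpt advertises.
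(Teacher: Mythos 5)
Your proposal is correct and matches the paper exactly: the paper gives no separate argument for this corollary beyond the single sentence ``Combining \Cref{lem: presentazione oggeti reglex Grothendieck} with \Cref{cor: regular come  ex lex}'', which is precisely your strategy of transporting the isomorphism $[\id]$ from $\reglex{\mG_P}$ across the equivalence $\regdoctrine{\Psi_{\mG_P}}\equiv\reglex{\mG_P}$ after unwinding an object $((B,\beta),f)$ into its slice representative $\frecciasopra{(A,\alpha)}{f}{(B,\beta)}$. The only quibble is terminological: the objects of $\reglex{\mG_P}$ are single arrows of $\mG_P$, not cospans, but this does not affect the argument.
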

Now we observe that the regular completion of a full existential doctrine depends only on the objects of the base category and on the fibres:
\begin{lemma}\label{lemma P ha fibre isomorfe a R}
If $\freccia{P}{(F,f)}{R}$ is a morphism of full existential doctrines such that $\freccia{P(A)}{f_A}{R(FA)}$ is an isomorphism for every object $A$ of the base, then $\freccia{\regdoctrine{P}}{\regdoctrine{F,f}}{\regdoctrine{R}}$ is full and faithful functor. 
\end{lemma}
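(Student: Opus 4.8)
The plan is to work from the explicit description of the induced functor supplied by the $2$-functoriality in \Cref{thm: regular completion full ex doctrine}. On objects, $\regdoctrine{F,f}$ sends $(A,\alpha)$ to $(FA,f_A(\alpha))$, and on an arrow $\phi\in P(A\times B)$ it returns $f_{A\times B}(\phi)$, regarded as an element of $R(FA\times FB)$ through the canonical isomorphism $F(A\times B)\cong FA\times FB$ coming from the fact that $F$ preserves finite limits. Faithfulness is then immediate: the action on each hom-poset is the restriction of $f_{A\times B}$ to the subposet of relations satisfying conditions (1)--(3) of \Cref{def: Reg P}, and since $f_{A\times B}$ is an isomorphism it is in particular injective, so distinct relations have distinct images.

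For fullness I would take an arbitrary arrow $\freccia{(FA,f_A\alpha)}{\psi}{(FB,f_B\beta)}$ of $\regdoctrine{R}$, that is, an element $\psi\in R(FA\times FB)$ satisfying (1)--(3) relative to $f_A\alpha$ and $f_B\beta$, and set $\phi:=f_{A\times B}^{-1}(\psi)$, which exists and is unique since $f_{A\times B}$ is an isomorphism. The crux is to show that $\phi$ is itself an arrow $(A,\alpha)\to(B,\beta)$ of $\regdoctrine{P}$, i.e.\ that it satisfies the same three conditions; this reduces to checking that each condition is transported by the isomorphism $f$ in both directions.

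Here one uses precisely the structure of a full existential morphism. Each $f_A$ is a homomorphism of inf-semilattices, hence preserves finite meets and $\top$; naturality of $f$ yields $f_A(P_g(\gamma))=R_{Fg}(f_B(\gamma))$ for every reindexing $g$; and, $(F,f)$ being full existential, $f$ commutes with the left adjoints $\exists_g$. Under the identification $F(A\times B)\cong FA\times FB$ the projections correspond, so condition (1) transports via meets and naturality, condition (2) transports because $f_A(\exists_{\pr_A}\phi)=\exists_{\pr_{FA}}(f_{A\times B}\phi)$, and the single-valuedness condition (3) transports once one notes that $\delta_B=\exists_{\angbr{\id_B}{\id_B}}(\top_B)$ is sent to $\delta_{FB}$, since $f$ preserves $\top$, commutes with $\exists$, and $F$ preserves the diagonal. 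Because $f$ is an isomorphism, $f^{-1}$ inherits all of these compatibilities, so each of (1)--(3) holds for $\phi$ if and only if it holds for $\psi$. Hence $\phi$ is a genuine arrow of $\regdoctrine{P}$ with $\regdoctrine{F,f}(\phi)=\psi$, and $\regdoctrine{F,f}$ is full.

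I expect the only real obstacle to be bookkeeping rather than mathematics: one must use the canonical isomorphism $F(A\times B)\cong FA\times FB$ (and its $n$-fold analogues on $A\times B\times B$ and $B\times B$) consistently to match the projections and diagonals occurring in (1)--(3) with those appearing in $R$. Since $R$ carries this isomorphism to an isomorphism of fibres and naturality of $f$ governs reindexing along the projections, this step is harmless provided the notation is kept careful.
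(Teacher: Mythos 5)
Your proposal is correct and follows essentially the same route as the paper's proof: both rest on the observation that, since each $f_A$ is an isomorphism of inf-semilattices and $(F,f)$ commutes with reindexing, finite meets, and the left adjoints (hence sends $\delta_B$ to $\delta_{FB}$ under $F(A\times B)\cong FA\times FB$), an element $\phi\in P(A\times B)$ satisfies conditions (1)--(3) of \Cref{def: Reg P} if and only if $f_{A\times B}(\phi)$ does, which gives fullness and faithfulness at once. The paper states this biconditional in a single stroke, while you verify the transport of each condition explicitly (including the reflection of order via $f^{-1}$), so your write-up is simply a more detailed rendering of the same argument.
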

\begin{proof}
Since $\freccia{P}{(F,f)}{R}$ is a morphism of full existential doctrines and $f$ is a natural transformation whose components $\freccia{P(A)}{f_A}{R(FA)}$ are isomorphisms we have that an element $\phi$ of the fibre $P(A\times B)$ provides a morphism $\freccia{(A,\alpha)}{\phi}{(B,\beta)}$ of $\regdoctrine{P}$ if and only if the element $f_{A\times B}(\phi)$ of the fibre $RF(A\times B)$ (that is isomorphic to the fibre $R(FA\times FB)$ since $F$ is a finite limits preserving functor) provides a morphism $\freccia{(FA,f_A(\alpha))}{f_{A\times B}(\phi)}{(FB,f_B(\beta))}$. Therefore, we have that the functor  $\freccia{\regdoctrine{P}}{\regdoctrine{F,f}}{\regdoctrine{R}}$ is a full and faithful.
\end{proof}

\begin{theorem}\label{thm:caratterizzazione regular comp existential completion}
Let $\doctrine{\mC}{P}$ be a lex primary doctrine. Then 
\[\regdoctrine{\compexfull{P}}\equiv \reglex{\mG_P}\]
\end{theorem}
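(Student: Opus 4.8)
The plan is to factor the desired equivalence through the weak subobject doctrine on the category of points $\mG_P$. By \Cref{cor: regular come  ex lex} applied to $\mG_P$ (which has finite limits by \Cref{prop: G_P has finite limits}), we have $\reglex{\mG_P}\equiv\regdoctrine{\Psi_{\mG_P}}$, so it suffices to produce an equivalence $\regdoctrine{\compexfull{P}}\equiv\regdoctrine{\Psi_{\mG_P}}$. The natural candidate is the functor $\regdoctrine{I_{\mC},\id}$ induced by the morphism $(I_{\mC},\id)\colon\compexfull{P}\to\Psi_{\mG_P}$, where $I_{\mC}\colon\mC\to\mG_P$ sends $A\mapsto(A,\top)$. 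This really is a morphism of doctrines, because \Cref{thm: full existential doctrine as weak subobject} gives $\compexfull{P}=\Psi_{\mG_P}\circ I_{\mC}$, so the comparison natural transformation is the identity and in particular each of its components is an isomorphism.

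First I would check that $(I_{\mC},\id)$ is a morphism of \emph{full existential} doctrines, so that \Cref{lemma P ha fibre isomorfe a R} applies. Both doctrines are full existential: $\compexfull{P}$ by \Cref{thm: compex P is existential and RC} and $\Psi_{\mG_P}$ by \Cref{ex:weak sub doctrine}; and $I_{\mC}$ preserves finite limits, since $(A,\top)\times(B,\top)=(A\times B,\top)$ and $(1,\top)$ is terminal in $\mG_P$. The one substantive point is that the identity comparison commutes with the left adjoints along product projections. Here I would unwind the two descriptions: in $\compexfull{P}$ the quantifier $\exists_{\pr_B}$ sends $(\frecciasopra{C}{g}{A\times B},\gamma)$ to $(\frecciasopra{C}{\pr_B g}{B},\gamma)$, whereas in $\Psi_{\mG_P}$ the quantifier along $I_{\mC}(\pr_B)$ is post-composition, sending $g\colon(C,\gamma)\to(A\times B,\top)$ to $\pr_B\circ g\colon(C,\gamma)\to(B,\top)$; under the identification of \Cref{thm: full existential doctrine as weak subobject} these two outputs agree. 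Granting this, \Cref{lemma P ha fibre isomorfe a R} shows that $\regdoctrine{I_{\mC},\id}$ is full and faithful.

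It then remains to establish essential surjectivity. Every object in the image of $\regdoctrine{I_{\mC},\id}$ has the form $((B,\top),f)$, and by \Cref{cor: oggetti reg weak sub su grot. P} every object $((B,\beta),f)$ of $\regdoctrine{\Psi_{\mG_P}}$ is isomorphic to one of exactly this shape. Hence the functor is essentially surjective, and being also full and faithful it is an equivalence. Composing with the equivalence of \Cref{cor: regular come  ex lex} yields $\regdoctrine{\compexfull{P}}\equiv\reglex{\mG_P}$.

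The hard part will be the existential-compatibility check in the second paragraph, namely verifying that the freely added quantifiers of the full existential completion coincide, under $I_{\mC}$, with the post-composition quantifiers of the weak subobject doctrine on $\mG_P$. Everything else is either a direct appeal to an already-established result or the routine verification that $I_{\mC}$ preserves finite limits and that the comparison components are identities.
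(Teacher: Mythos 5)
Your proposal is correct and follows essentially the same route as the paper's proof: factor through \Cref{thm: full existential doctrine as weak subobject}, apply \Cref{lemma P ha fibre isomorfe a R} for full faithfulness, use \Cref{cor: oggetti reg weak sub su grot. P} for essential surjectivity, and conclude via \Cref{cor: regular come  ex lex}. The existential-compatibility check you flag as the hard part is exactly the hypothesis of \Cref{lemma P ha fibre isomorfe a R} that the paper applies without explicit verification, and your unwinding of it (both quantifiers being post-composition along projections) is the right and routine argument, so if anything your write-up is more careful than the original.
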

\begin{proof}
	By \Cref{thm: full existential doctrine as weak subobject} we have that $\regdoctrine{\compexfull{P}}\cong \regdoctrine{\Psi_{\mG_P}\circ I_{\mC}}$. To conclude the proof it is enough to observe that by \Cref{lemma P ha fibre isomorfe a R} the category $\regdoctrine{\Psi_{\mG_P}\circ I_{\mC}}$ is a full sub-category of $\regdoctrine{\Psi_{\mG_P}}$. In particular, by definition of $\regdoctrine{-}$ (see \Cref{def: Reg P}) it is the full subcategory whose objects are of the form $(B,\top),f)$. Hence we can apply \Cref{cor: oggetti reg weak sub su grot. P} to conclude that 
	\[\regdoctrine{\Psi_{\mG_P}\circ I_{\mC}}\equiv \regdoctrine{\Psi_{\mG_P}}.\]
	Therefore, combining these equivalences with \Cref{cor: regular come  ex lex} we can conclude that 
	\[ \regdoctrine{\compexfull{P}}\equiv \regdoctrine{\Psi_{\mG_P}\circ I_{\mC}} \equiv\regdoctrine{\Psi_{\mG_P}}\equiv \reglex{\mG_P}. \]
	
\end{proof}

	\begin{example}\label{ex: regular completion realizability tripos}
		By combining \Cref{thm:caratterizzazione regular comp existential completion} with \Cref{ex:partition assemblies as grothendieck cat} and \Cref{ex: real tripos is an existential comp} we obtain that the regular completion of the realizability tripos is equivalent to the category of assemblies:

		\[ \regdoctrine{ \mathcal{P}}\equiv \reglex{\mG_{\pca{A}^{(-)}}}\equiv \reglex{\parassembly{\pca{A}}}\]
        i.e. the category  $ \regdoctrine{ \mathcal{P}}$ is equivalent to the category of assemblies $\assembly{\pca{A}}$ \cite{van_Oosten_realizability,colimitcomprobinson}.
		\end{example}

Combining 
\Cref{teorema T_P=reg(P)_exreg} with \Cref{thm:caratterizzazione regular comp existential completion} we obtain our first main result, that will provide a generalization in the context of doctrines of the (canonical) localic instance of the Comparison Lemma:
\begin{theorem}\label{thm:characterisation tripostotopos full ex comp}
	Let $\doctrine{\mC}{P}$ be a lex primary doctrine. Then we have the isomorphism
	\[\sT_{\compexfull{P}}\equiv \exlex{\mG_P}=\presh{P}.\]
	\end{theorem}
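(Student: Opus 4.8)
The plan is to assemble the equivalence by chaining together the structural results already established about the regular and exact completions of the full existential completion. The starting observation is that $\compexfull{P}$ is itself a full existential doctrine by \Cref{thm: compex P is existential and RC}, so the machinery relating the tripos-to-topos construction to the exact completion of a doctrine applies directly to it.

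First I would invoke \Cref{teorema T_P=reg(P)_exreg} with the doctrine $\compexfull{P}$ in place of $P$, obtaining the equivalence $\sT_{\compexfull{P}} \equiv \exreg{\regdoctrine{\compexfull{P}}}$, which expresses the tripos-to-topos of $\compexfull{P}$ as the exact-on-regular completion of its regular completion. Next I would apply the characterization of that regular completion furnished by \Cref{thm:caratterizzazione regular comp existential completion}, namely $\regdoctrine{\compexfull{P}} \equiv \reglex{\mG_P}$, to rewrite the inner category as the regular-on-lex completion of the category of points $\mG_P$ (which has finite limits by \Cref{prop: G_P has finite limits}). Combining these two steps gives $\sT_{\compexfull{P}} \equiv \exreg{\reglex{\mG_P}}$.

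The final ingredient is the standard factorization of the exact-on-lex completion through the regular-on-lex completion followed by the exact-on-regular completion: for any category $\mD$ with finite limits one has $\exreg{\reglex{\mD}} \equiv \exlex{\mD}$, a basic fact of the theory of these completions as developed in \cite{REC,SFEC}. Applying it to $\mD = \mG_P$ yields $\exreg{\reglex{\mG_P}} \equiv \exlex{\mG_P}$, and together with the definition $\presh{P} := \exlex{\mG_P}$ this completes the chain $\sT_{\compexfull{P}} \equiv \exreg{\reglex{\mG_P}} \equiv \exlex{\mG_P} = \presh{P}$.

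The argument is essentially a bookkeeping of equivalences, and the only point deserving genuine care is the identification $\exreg{\reglex{\mD}} \equiv \exlex{\mD}$; since the two doctrinal completions are the nontrivial constructions already handled by the cited theorems, I expect no real obstacle here, though one should make sure the iterated completion is invoked in exactly the form in which it is proved in the literature on Carboni's completions, so that the finite-limit hypothesis on $\mG_P$ is the only thing being used.
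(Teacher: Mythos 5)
Your proposal is correct and takes essentially the same route as the paper, which obtains the theorem precisely by combining \Cref{teorema T_P=reg(P)_exreg} (applied to $\compexfull{P}$) with \Cref{thm:caratterizzazione regular comp existential completion} and the standard equivalence $\exreg{\reglex{\mG_P}}\equiv\exlex{\mG_P}$ from Carboni's theory of completions. The only difference is that you spell out the bookkeeping (including the appeal to \Cref{thm: compex P is existential and RC} to justify applying the tripos-to-topos machinery to $\compexfull{P}$) that the paper leaves implicit.
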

    \Cref{thm:characterisation tripostotopos full ex comp} and \Cref{ex;localic presh topos} show the tripos-to-topos construction is able to capture as particular cases both localic sheaf and presheaf toposes.
                
 Combining the universal properties of the tripos-to-topos  and the full existential completion we can  conclude that the construction $\presh{-}$ has the universal property of being the \emph{exact completion of lex primary doctrines}:
                
            \begin{cor}[exact completion of lex primary doctrines]\label{thm:presh universal propery}
                   The category $\presh{P}$ is an exact category, and the assignment $P\mapsto \presh{P}$ extends to a 2-functor 
                   \[\begin{tikzcd}
					\fullprDoc && \Excat
					\arrow[""{name=0, anchor=center, inner sep=0}, "\presh{-}",curve={height=-14pt}, from=1-1, to=1-3]
					\arrow[""{name=1, anchor=center, inner sep=0}, "\Sub_{(-)}",curve={height=-14pt}, hook',from=1-3, to=1-1]
					\arrow["\dashv"{anchor=center, rotate=-90}, draw=none, from=1, to=0]
				\end{tikzcd}\]
                   from the 2-category $\fullprDoc$ of lex primary doctrines to the 2-category $\Excat$ of exact categories,  and it is left adjoint to the functor sending an exact category $\mC$ to doctrine $\Sub_{\mC}$ of its subobjects.
                \end{cor}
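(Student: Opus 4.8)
The plan is to exhibit the assignment $P \mapsto \presh{P}$ as the composite of two 2-functors that have already been constructed in the excerpt, and then to read off both the 2-functoriality and the adjunction from this factorization. Concretely, I would combine \Cref{thm:characterisation tripostotopos full ex comp}, which gives $\presh{P} = \exlex{\mG_P} \equiv \sT_{\compexfull{P}}$, with \Cref{teorema T_P=reg(P)_exreg} applied to the full existential doctrine $\compexfull{P}$, which gives $\sT_{\compexfull{P}} \equiv \exreg{\regdoctrine{\compexfull{P}}}$. Chaining these equivalences yields, for every lex primary doctrine $P$, the equivalence
\[\presh{P} \equiv \exreg{\regdoctrine{\compexfull{P}}}.\]
Since the right-hand side is exact by \Cref{theorem maietti rosolini pasquali exact comp}, this immediately settles the first assertion, that $\presh{P}$ is an exact category.

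The next step is 2-functoriality. Rather than verifying the naturality of the displayed equivalence by hand, I would define the candidate 2-functor directly as the composite
\[\fullprDoc \xrightarrow{\ \compexfull{(-)}\ } \EED \xrightarrow{\ \exreg{\regdoctrine{-}}\ } \Excat,\]
whose first factor is a 2-functor by \Cref{thm: compex P is existential and RC} and whose second factor is a 2-functor by \Cref{theorem maietti rosolini pasquali exact comp}. A composite of 2-functors is a 2-functor, and by the objectwise equivalence above it sends $P$ to a category equivalent to $\presh{P}$; this is exactly the claim that $P \mapsto \presh{P}$ extends to a 2-functor $\fullprDoc \to \Excat$.

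Finally, for the adjunction I would compose the two known left 2-adjoints. By \Cref{thm: compex P is existential and RC} the 2-functor $\compexfull{(-)}$ is left 2-adjoint to the forgetful inclusion $\EED \hookrightarrow \fullprDoc$, and by \Cref{theorem maietti rosolini pasquali exact comp} the 2-functor $\exreg{\regdoctrine{-}}$ is left 2-adjoint to the subobjects functor $\Sub_{(-)} \colon \Excat \to \EED$. Since a composite of left adjoints is left adjoint to the composite of the corresponding right adjoints, $\presh{-}$ is left 2-adjoint to the composite
\[\Excat \xrightarrow{\ \Sub_{(-)}\ } \EED \hookrightarrow \fullprDoc,\]
which sends an exact category $\mC$ to its subobjects doctrine $\Sub_{\mC}$, now regarded merely as a lex primary doctrine along the forgetful inclusion. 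This is precisely the right adjoint asserted in the statement.

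The only genuine subtlety I anticipate is the 2-cell bookkeeping: one must ensure that the identification $\presh{-} \simeq \exreg{\regdoctrine{\compexfull{-}}}$ is a pseudonatural equivalence of 2-functors, not just an objectwise equivalence, before the adjunction can be transported along it. This is exactly why I prefer to take the composite 2-functor as the primary object of study and to use \Cref{thm:characterisation tripostotopos full ex comp} only to identify its value on objects; with this framing no separate naturality verification is required, the exactness and 2-functoriality are inherited, and the adjunction is formal.
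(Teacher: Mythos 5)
Your proposal is correct and follows essentially the same route as the paper: the paper's (one-line) justification is precisely to compose the 2-adjunction $\compexfull{(-)}\dashv{}$inclusion of \Cref{thm: compex P is existential and RC} with the 2-adjunction $\exreg{\regdoctrine{-}}\dashv \Sub_{(-)}$ of \Cref{theorem maietti rosolini pasquali exact comp}, using \Cref{thm:characterisation tripostotopos full ex comp} (via \Cref{teorema T_P=reg(P)_exreg}) to identify the composite's value on objects with $\presh{P}$, which also yields exactness. Your closing remark about taking the composite 2-functor as primary so that the pseudonaturality of the identification needs no separate check is a careful spelling-out of the same argument, not a different approach.
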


\begin{cor}\label{teorema prefasci e fasci}
                    Let $\loc$ be a locale. Then we have the following equivalence
                    \[\sh{D(\loc)}\equiv \mathsf{PSh}{\loc}\]
                    where $D(\loc)$ is the supercoherent locale constructed from $\loc$.
                 \end{cor}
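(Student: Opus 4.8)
The plan is to obtain this equivalence as a direct specialization of \Cref{thm:characterisation tripostotopos full ex comp} to the localic tripos, so that essentially all of the mathematical work is already contained in the machinery developed above; what remains is to align the doctrinal constructions with their classical localic counterparts.

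First I would instantiate \Cref{thm:characterisation tripostotopos full ex comp} at the lex primary doctrine $P:=\loc^{(-)}$ of \Cref{ex:lex primary doctrine localic}, where $\loc$ is regarded as an inf-semilattice by forgetting its joins. This immediately yields
\[\sT_{\compexfull{(\loc^{(-)})}}\equiv \presh{\loc^{(-)}}=\exlex{\mG_{\loc^{(-)}}}.\]
Next I would rewrite the right-hand side using \Cref{ex;localic presh topos}, which identifies $\presh{\loc^{(-)}}$ with the localic presheaf topos $\mathsf{PSh}(\loc)$, via the chain $\exlex{\mG_{\loc^{(-)}}}=\exlex{\loc_+}\equiv \mathsf{PSh}(\loc)$ together with the identification of $\mG_{\loc^{(-)}}$ with the coproduct completion $\loc_+$.

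Then I would rewrite the left-hand side. By \Cref{ex:supercoherent localic doctrine is full ex comp}, applied with $\supercomp:=\loc$ viewed as its underlying inf-semilattice, the full existential completion of the localic doctrine is again a localic doctrine, namely $\compexfull{(\loc^{(-)})}\cong D(\loc)^{(-)}$, where $D(\loc)$ is the supercoherent locale freely generated by adjoining suprema to this inf-semilattice, i.e.\ exactly the supercompactification appearing in the statement. Since $D(\loc)$ is a locale, \Cref{ex:loc_sh_as_ttt} applies and gives $\sT_{D(\loc)^{(-)}}\equiv \sh{D(\loc)}$. Chaining the three identifications,
\[\sh{D(\loc)}\equiv \sT_{D(\loc)^{(-)}}\equiv \sT_{\compexfull{(\loc^{(-)})}}\equiv \presh{\loc^{(-)}}\equiv \mathsf{PSh}(\loc),\]
which is the desired equivalence.

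The genuine content is already discharged by the earlier results, so I do not expect a substantive obstacle in the deduction itself; the only point requiring care—and the step I would verify most carefully—is the bookkeeping identification of the classical supercompactification $D(\loc)$ (defined by freely adjoining suprema to the meet-semilattice underlying $\loc$) with the locale whose localic doctrine is $\compexfull{(\loc^{(-)})}$. Concretely, this amounts to checking that the isomorphism $\compexfull{(\supercomp^{(-)})}\cong D(\supercomp)^{(-)}$ of \Cref{ex:supercoherent localic doctrine is full ex comp} is being invoked with the same functor $D$ as in the statement of the corollary, so that the two notions of supercompactification coincide.
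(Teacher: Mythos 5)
Your proposal is correct and follows essentially the same route as the paper's own argument, which likewise identifies $\mathsf{PSh}(\loc)\equiv\exlex{\loc_+}=\exlex{\mG_{\loc^{(-)}}}$ and then applies \Cref{thm:characterisation tripostotopos full ex comp} together with \Cref{ex:supercoherent localic doctrine is full ex comp} and \Cref{ex:loc_sh_as_ttt} to conclude $\sh{D(\loc)}\equiv \sT_{\compexfull{(\loc^{(-)})}}\equiv\mathsf{PSh}(\loc)$. The bookkeeping point you rightly flag—that $\compexfull{(\supercomp^{(-)})}\cong D(\supercomp)^{(-)}$ holds for an arbitrary inf-semilattice $\supercomp$, not just for the supercompact elements of a supercoherent locale—is indeed the form of the result proved in the cited source (\cite[Thm.~7.32]{MaiettiTrotta21}), and it is exactly what the paper implicitly invokes as well.
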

    Combining \Cref{prop:useful_adjunction} with \Cref{thm:characterisation tripostotopos full ex comp} we have the following corollary, relating the exact completion of the base category of a lex primary doctrine with the tripos-to-topos of its full existential completion:
\begin{cor}\label{cor_C_exlex_adj_T_P_exists}
    Let $\doctrine{\mC}{P}$ be a lex primary doctrine. Then we have an adjunction of exact categories whose co-unit is an iso:
        \[\begin{tikzcd}
					\presh{P}\equiv \sT_{\compexfull{P}} && \exlex{\mC}
					\arrow[""{name=0, anchor=center, inner sep=0}, "",curve={height=-14pt}, from=1-1, to=1-3]
					\arrow[""{name=1, anchor=center, inner sep=0}, "",curve={height=-14pt}, hook',from=1-3, to=1-1]
					\arrow["\dashv"{anchor=center, rotate=-90}, draw=none, from=1, to=0]
				\end{tikzcd}\]
\end{cor}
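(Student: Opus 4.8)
The plan is to obtain the stated adjunction of exact categories purely formally, by transporting the doctrine-level adjunction already produced in \Cref{prop:useful_adjunction} along the tripos-to-topos construction. The key structural input is that, on full existential doctrines, $\sT_{(-)}$ is not merely an assignment on objects but (the object part of) a genuine 2-functor, and 2-functors automatically preserve internal adjunctions together with the invertibility of their (co)units.

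Concretely, I would proceed as follows. First, by \Cref{prop:useful_adjunction} there is an adjunction internal to the 2-category $\EED$ of full existential doctrines, with left adjoint $\freccia{\compexfull{P}}{(\id_{\mC},\bar{\mathfrak{b}})}{\Psi_{\mC}}$, right adjoint $\freccia{\Psi_{\mC}}{(\id_{\mC},\mathfrak{b})}{\compexfull{P}}$, and counit an isomorphism. Second, by \Cref{teorema T_P=reg(P)_exreg} the tripos-to-topos $\sT_{(-)}$ agrees on full existential doctrines with the exact completion $\exreg{\regdoctrine{-}}$, which by \Cref{theorem maietti rosolini pasquali exact comp} extends to a 2-functor $\EED\to\Excat$. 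Applying this 2-functor to the adjunction above yields an adjunction $\sT(\id_{\mC},\bar{\mathfrak{b}})\dashv\sT(\id_{\mC},\mathfrak{b})$ of exact categories between $\sT_{\compexfull{P}}$ and $\sT_{\Psi_{\mC}}$. Since a 2-functor sends invertible 2-cells to invertible 2-cells, the image of the counit is again an isomorphism, so the induced adjunction has counit an iso, and its right adjoint is the fully faithful leg depicted in the statement. Finally, I would identify the two endpoints: $\sT_{\compexfull{P}}\equiv\presh{P}$ by \Cref{thm:characterisation tripostotopos full ex comp}, and $\sT_{\Psi_{\mC}}\equiv\exlex{\mC}$ by \Cref{ex: exact comp of weak sub 1}, which rewrites the adjunction in exactly the desired form.

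The step I expect to be most delicate — and the one to emphasize — is making sure that \Cref{prop:useful_adjunction} genuinely delivers an adjunction \emph{internal to} $\EED$, i.e. that both legs $(\id_{\mC},\bar{\mathfrak{b}})$ and $(\id_{\mC},\mathfrak{b})$ are full existential morphisms and that the triangle identities hold as doctrine transformations; this is precisely what licenses the formal ``2-functors preserve adjunctions'' argument, including the transfer of the counit-iso. I would also stress that one cannot shortcut this via \Cref{cor geometric morphism of triposes induce geometric morphisms}, since $\compexfull{P}$ and $\Psi_{\mC}$ are full existential doctrines but not triposes in general; the argument must therefore rest on the 2-functoriality of the exact completion on all of $\EED$, rather than on the tripos-specific transfer of geometric morphisms.
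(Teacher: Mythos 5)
Your proposal is correct and follows essentially the same route as the paper, which obtains this corollary precisely by combining the doctrine-level adjunction of \Cref{prop:useful_adjunction} with the 2-functoriality of the exact completion $\sT_{(-)}\cong\exreg{\regdoctrine{-}}$ on $\EED$ and the identifications $\sT_{\compexfull{P}}\equiv\presh{P}$ and $\sT_{\Psi_{\mC}}\equiv\exlex{\mC}$. Your added care about the adjunction being internal to $\EED$ (so that 2-functoriality transports it and preserves the invertible counit), and your remark that one cannot instead invoke \Cref{cor geometric morphism of triposes induce geometric morphisms} since $\compexfull{P}$ and $\Psi_{\mC}$ need not be triposes, are both accurate refinements of the paper's terse argument.
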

		\begin{example}\label{Example realizability topos as exact com}
			A relevant topos having the presentation of \Cref{thm:characterisation tripostotopos full ex comp} is provided by the realizability topos $\mathsf{RT} (\pca{A})$ associated to a pca $\pca{A}$ \cite{TT,colimitcomprobinson,HYLAND1982165}. In particular, we recall from \cite{colimitcomprobinson} that we have an equivalence $\mathsf{RT}(\pca{A})\equiv \exlex{\parassembly{\pca{A}}}$ between a realizability topos and the exact completion of the lex category of partition assemblies. Given a realizability doctrine $\doctrine{\set}{\mathcal{P}}$ associated to a pca $\pca{A}$, we have
			that combining \Cref{thm:characterisation tripostotopos full ex comp} with \Cref{ex:partition assemblies as grothendieck cat} and \Cref{ex: real tripos is an existential comp} we can conclude that
			\[\sT_{\mathcal{P}}\equiv \exlex{\mG_{\pca{A}^{(-)}}}\equiv \exlex{\parassembly{\pca{A}}}\equiv \mathsf{RT}(\pca{A}).\]

			\end{example}

			\begin{example}\label{example topos of sheaves over supercomp locale}
				
				As observed in \cite[Ex. 2.15]{TT}, the topos $\sT_{\loc^{(-)}}$ obtained by the localic doctrine associated to a locale $\loc$ has to be equivalent to the category $\sheaves{\loc}$ of canonical sheaves over $\loc$. When $\loc$ is supercoherent then, by \Cref{ex:supercoherent localic doctrine is full ex comp}, we have that the localic doctrine $\doctrine{\set}{\loc^{(-)}}$ is the full existential completion of the doctrine  $\doctrine{\set}{\supercomp^{(-)}}$ associated to the inf-semilattice $\supercomp$ of the supercompact elements of $\loc$. 
				 Therefore, by \Cref{thm:characterisation tripostotopos full ex comp}, we have
				\[\sT_{\loc^{(-)}}\equiv \sheaves{\loc}\equiv \exlex{\mG_{\supercomp^{(-)}}}\]
				where $\supercomp$ is the inf-semilattice of the supercompact elements of $\loc$.
						
				\end{example}

\section{A charaterization of abstract localic presheaves toposes}

Observe that while the notion of a tripos is sufficient to ensure that its tripos-to-topos construction yields a topos, it does not, in general, guarantee that the full existential completion of a tripos is itself a tripos, and hence that the corresponding category $\presh{P}$ of a tripos $P$ is again a topos.

The underlying intuition is that the latter construction depends not only on the elements of the fibres of a tripos, but also on the arrows of its base category.

\begin{definition}[$\exists$-supercompactifiable doctrine]\label{def:exists-sheaf tripos}
A lex primary doctrine $\doctrine{\mC}{P}$ is said $\exists$-\textbf{supercompactifiable} if the category of points $\mG_P$ has weak dependent products and a generic proof.
\end{definition}
Employing our previous analysis, we can provide the following characterization of $\exists$-supercompactifiable doctrines.
\begin{theorem}\label{thm:comp ex tripos is equivalent to}
   Let $\doctrine{\mC}{P}$ be a lex primary doctrine. Then the following are equivalent:
   \begin{enumerate}
      \item  $\doctrine{\mC}{P}$ is a $\exists$-supercompactifiable doctrine;
      \item  $\doctrine{\mG_P}{\Psi_{\mG_P}}$ is a full tripos;
      \item  $\doctrine{\mC}{\compexfull{P}}$ is a full tripos;
      \item $\presh{P}$ is a topos.
   \end{enumerate}
   \end{theorem}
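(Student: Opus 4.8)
The plan is to establish the cycle of implications $(1)\Leftrightarrow(2)\Rightarrow(3)\Rightarrow(4)\Rightarrow(1)$, leaning almost entirely on the machinery already assembled in the excerpt, so that each arrow reduces to a single citation rather than fresh work. The equivalence $(1)\Leftrightarrow(2)$ is essentially immediate: by \Cref{thm:weak suobject tripos} applied to the finite-limit category $\mG_P$ (which has finite limits by \Cref{prop: G_P has finite limits}), the weak subobject doctrine $\doctrine{\mG_P}{\Psi_{\mG_P}}$ is a full tripos if and only if $\mG_P$ has weak dependent products and a generic proof, which is exactly the definition of $P$ being $\exists$-supercompactifiable in \Cref{def:exists-sheaf tripos}.

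For $(2)\Rightarrow(3)$ I would use \Cref{thm: full existential doctrine as weak subobject}, which identifies $\compexfull{P}=\Psi_{\mG_P}\circ I_{\mC}$. The point here is that the fibres of $\compexfull{P}$ over an object $A$ of $\mC$ coincide with the fibres of $\Psi_{\mG_P}$ over $(A,\top)=I_{\mC}(A)$. Since the tripos structure (weak power objects, equivalently weak predicate classifier together with weak cartesian closure, via \Cref{rem:weak cart closed power ob iff pred clas}) is a fibrewise condition, being a full tripos transfers from $\Psi_{\mG_P}$ to its restriction $\compexfull{P}$ along the finite-limit-preserving functor $I_{\mC}$. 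The main subtlety to check is that the weak power object data of $\Psi_{\mG_P}$ can be pulled back to data on $\compexfull{P}$; this is where I would spend the most care, verifying that the object $\Omega$ and the membership predicate $\in$ witnessing the weak predicate classifier for $\Psi_{\mG_P}$ live, up to the identification above, in the image of $I_{\mC}$ or can be replaced by such.

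The implications $(3)\Rightarrow(4)$ and $(4)\Rightarrow(3)$ both flow through \Cref{thm:characterisation tripostotopos full ex comp}, which gives the equivalence $\sT_{\compexfull{P}}\equiv\presh{P}$. For $(3)\Rightarrow(4)$: if $\compexfull{P}$ is a full tripos then $\sT_{\compexfull{P}}$ is a topos by \Cref{cor:tripos-to-tops of a tripos is topos}, and hence so is $\presh{P}$. For the converse $(4)\Rightarrow(3)$, and to close the cycle, I would invoke \Cref{cor:P_ex_is_tripos_iff_its_ttt_is_ topos}: since $\compexfull{P}$ is a full existential completion, by \Cref{thm: compex P is existential and RC} it is a full existential doctrine, and crucially, once one knows it is a first-order hyperdoctrine, that corollary states $\sT_{\compexfull{P}}$ is a topos if and only if $\compexfull{P}$ is a tripos. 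Thus $(4)\Rightarrow(3)$ follows provided $\compexfull{P}$ is a first-order hyperdoctrine.

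The step I expect to be the genuine obstacle is precisely this hyperdoctrine hypothesis feeding into \Cref{cor:P_ex_is_tripos_iff_its_ttt_is_ topos}: that corollary is stated under the standing assumption that $\compexfull{P}$ is a first-order hyperdoctrine, i.e. that its fibres are Heyting algebras with both adjoints along every arrow. A full existential completion supplies existential quantifiers and Frobenius, but Heyting structure and universal quantifiers are not automatic. I would therefore route the argument so that $(4)\Rightarrow(1)$ is obtained directly rather than through the hyperdoctrine detour: assuming $\presh{P}=\exlex{\mG_P}$ is a topos, \Cref{cor: Menni chararcterisation} (Menni's characterization) forces $\mG_P$ to have weak dependent products and a generic proof, which is exactly $(1)$. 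This cleanly closes the loop $(1)\Leftrightarrow(2)\Rightarrow(3)\Rightarrow(4)\Rightarrow(1)$ using Menni's theorem for the final arrow and avoiding any unproven hyperdoctrine assumption on $\compexfull{P}$.
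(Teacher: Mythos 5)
Your cycle agrees with the paper on three of its four arrows, and those steps are correct: $(1)\Leftrightarrow(2)$ via \Cref{thm:weak suobject tripos} and \Cref{def:exists-sheaf tripos}, $(3)\Rightarrow(4)$ via \Cref{thm:characterisation tripostotopos full ex comp}, and $(4)\Rightarrow(1)$ via Menni's \Cref{cor: Menni chararcterisation} (this last is exactly the paper's argument). The genuine gap is your $(2)\Rightarrow(3)$, which is load-bearing in your cycle. You propose to transfer the tripos structure of $\Psi_{\mG_P}$ ``fibrewise'' along $I_{\mC}$, and you yourself flag the weak power object/weak predicate classifier data as the main subtlety --- but you never resolve it, and it is not a routine fibrewise fact. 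Being a tripos is not a property of the fibres alone: the classifying arrows $\{\alpha\}$ must exist in the \emph{base}. The classifier of $\Psi_{\mG_P}$ is an object $(\Lambda_0,\lambda)$ of $\mG_P$, and an arrow $(A,\top)\to(\Lambda_0,\lambda)$ is an arrow $A\to\Lambda_0$ of $\mC$ subject to the nontrivial constraint $\top\leq P_f(\lambda)$, so the classifier does not simply restrict along $I_{\mC}$. (The transfer can in fact be repaired --- pullbacks in $\mG_P$ do not depend on the predicate on the codomain vertex, so any generic proof of $\mG_P$ can be re-targeted to one with codomain of the form $(\Lambda_0,\top)$ in the image of $I_{\mC}$, and weak cartesian closure of $\mC$ descends along the lex reflection onto $\mC$ to feed \Cref{rem:weak cart closed power ob iff pred clas} --- but no such argument appears in your proposal, and without it the arrow $(2)\Rightarrow(3)$ is unproven.)

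Ironically, the tool you considered and discarded, \Cref{cor:P_ex_is_tripos_iff_its_ttt_is_ topos}, is precisely how the paper closes this arrow, and your objection to it dissolves under hypothesis $(2)$. You are right that for an arbitrary lex primary doctrine the full existential completion need not have Heyting fibres or universal quantifiers; but $(2)$ supplies them for free: by \Cref{thm: full existential doctrine as weak subobject}, $\compexfull{P}=\Psi_{\mG_P}\circ I_{\mC}$, so the fibres of $\compexfull{P}$ are fibres of the full tripos $\Psi_{\mG_P}$, and both adjoints together with Beck--Chevalley restrict along the finite-limit-preserving functor $I_{\mC}$, making $\compexfull{P}$ a first-order hyperdoctrine with no extra work. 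The paper then notes that $(2)$ gives $\mG_P$ weak dependent products and a generic proof (\Cref{thm:weak suobject tripos}), hence $\sT_{\compexfull{P}}\equiv\exlex{\mG_P}$ is a topos by \Cref{cor: Menni chararcterisation} and \Cref{thm:characterisation tripostotopos full ex comp}, and finally applies \Cref{cor:P_ex_is_tripos_iff_its_ttt_is_ topos} --- whose engine is the Rule of Choice, valid for every full existential completion by \Cref{pro:comp_ex_sat_RC}, combined with \Cref{prop: CA+RC implies tripos} --- so that the weak power objects of $\compexfull{P}$ are \emph{manufactured} from the Comprehension Axiom rather than transferred from $\Psi_{\mG_P}$. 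Replacing your unfinished transfer argument with this route makes your proof coincide with the paper's.
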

   \begin{proof}
      $\it{(1\Rightarrow 2)}$ It follows by \Cref{thm:weak suobject tripos} and by \Cref{def:exists-sheaf tripos}.

   $\it{(2\Rightarrow 3)}$ If $\Psi_{\mG_P}$ is a full tripos, then by \Cref{thm: full existential doctrine as weak subobject} we have that by $\compexfull{P}$ is a first-order hyperdoctrine. Combining \Cref{thm:characterisation tripostotopos full ex comp} with \Cref{cor: Menni chararcterisation} we have that the category $\sT_{{\compexfull{P}}}\equiv \exlex{\mG_P}$ is a topos. Now we can conclude that $\compexfull{P}$ is a full tripos applying \Cref{cor:P_ex_is_tripos_iff_its_ttt_is_ topos}. 
   
   $\it{(3\Rightarrow 4)}$ By \Cref{thm:characterisation tripostotopos full ex comp} we have that $ \sT_{\compexfull{P}}\equiv \exlex{\mG_P}\equiv \presh{P}$, and hence, by \Cref{thm:hyperdoctrine and CA iff T topos}, we can conclude that $\presh{P}$ is a topos, since $\compexfull{P}$ is a full tripos.

   $\it{(4\Rightarrow 1)}$ It follows by \Cref{def:P-presheaves} and by \Cref{cor: Menni chararcterisation}.
   \end{proof}

\begin{cor}
    The $\presh{(-)}$-construction extends to a 2-functor 
                   \[\begin{tikzcd}
					   \exists\mbox{-}\mathsf{LexDoc} && \mathsf{Top}_{reg}
					\arrow[""{name=0, anchor=center, inner sep=0}, "\presh{(-)}",curve={height=-14pt}, from=1-1, to=1-3]
					\arrow[""{name=1, anchor=center, inner sep=0}, "\Sub_{(-)}",curve={height=-14pt}, hook',from=1-3, to=1-1]
					\arrow["\dashv"{anchor=center, rotate=-90}, draw=none, from=1, to=0]
				\end{tikzcd}\]
                   from the 2-category $\exists$-supercompactifiable of lex primary doctrines to the 2-category      of toposes categories and regular functor,  and it is left adjoint to the functor sending an exact category $\mC$ to doctrine $\Sub_{\mC}$ of its subobjects.
                \end{cor}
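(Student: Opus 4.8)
The plan is to derive this statement by \emph{restricting} the biadjunction $\presh{(-)}\dashv\Sub_{(-)}$ of \Cref{thm:presh universal propery} along the two full inclusions $\exists\mbox{-}\mathsf{LexDoc}\hookrightarrow\fullprDoc$ and $\mathsf{Top}_{reg}\hookrightarrow\Excat$. Here $\exists\mbox{-}\mathsf{LexDoc}$ is the full sub-$2$-category of $\fullprDoc$ on the $\exists$-supercompactifiable doctrines, and $\mathsf{Top}_{reg}$ is viewed as a full sub-$2$-category of $\Excat$: every topos is exact, and a finite-limit-preserving functor between exact categories is regular precisely when it is exact, so the inclusion loses and adds no morphisms. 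Once both adjoint functors are seen to preserve the relevant sub-$2$-categories, the hom-equivalence restricts automatically by fullness, and the unit and counit restrict because their (co)domains already lie in the sub-$2$-categories.

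First I would check that $\presh{(-)}$ carries $\exists\mbox{-}\mathsf{LexDoc}$ into $\mathsf{Top}_{reg}$. On objects this is exactly the implication $(1)\Rightarrow(4)$ of \Cref{thm:comp ex tripos is equivalent to}: if $P$ is $\exists$-supercompactifiable then $\presh{P}$ is a topos. On morphisms, \Cref{thm:presh universal propery} already provides that $\presh{(-)}$ acts by exact functors, and between toposes exact functors are precisely regular functors; hence the image lies in $\mathsf{Top}_{reg}$.

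The substantial point, and the one I expect to be the main obstacle, is to verify that the right adjoint $\Sub_{(-)}$ restricts, i.e.\ that $\Sub_{\mE}$ is $\exists$-supercompactifiable for every topos $\mE$. By the equivalence $(4)\Leftrightarrow(1)$ of \Cref{thm:comp ex tripos is equivalent to} together with \Cref{def:exists-sheaf tripos}, this amounts to showing that $\presh{\Sub_{\mE}}=\exlex{\mG_{\Sub_{\mE}}}$ is a topos, equivalently, by Menni's criterion \Cref{cor: Menni chararcterisation}, that the category of points $\mG_{\Sub_{\mE}}$ has weak dependent products and a generic proof. I would establish these from the topos structure of $\mE$: objects of $\mG_{\Sub_{\mE}}$ are pairs $(A,\alpha)$ with $\alpha$ a subobject of $A$, and morphisms are base maps commuting with the chosen subobjects, so $\mG_{\Sub_{\mE}}$ is simply $\mE$ decorated by subobjects. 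Weak dependent products would be lifted from the genuine dependent products of the locally cartesian closed category $\mE$, the subobject decoration being supplied by the right adjoints $\forall_{(-)}$ to pullback that exist on $\Sub_{\mE}$ since $\mE$ is a Heyting category; the generic proof would be built from the subobject classifier and power objects of $\mE$, encoding the fibrewise subobject data of an arbitrary map into a single classifying object. Equivalently, one may phrase this last step as producing a weak predicate classifier for $\compexfull{\Sub_{\mE}}=\Psi_{\mG_{\Sub_{\mE}}}\circ I_{\mE}$, which by \Cref{rem:weak cart closed power ob iff pred clas} upgrades to weak power objects since $\mE$ is cartesian closed. Verifying that these decorations satisfy the relevant \emph{weak} universal properties is where the real work lies.

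With the two restriction properties in hand, the biadjunction restricts: for an $\exists$-supercompactifiable $P$ and a topos $\mE$ one has
\[\mathsf{Top}_{reg}(\presh{P},\mE)=\Excat(\presh{P},\mE)\simeq\fullprDoc(P,\Sub_{\mE})=\exists\mbox{-}\mathsf{LexDoc}(P,\Sub_{\mE}),\]
the outer equalities holding because both inclusions are full (regular $=$ exact between toposes, and $\exists\mbox{-}\mathsf{LexDoc}$ full in $\fullprDoc$), and the middle equivalence being the one of \Cref{thm:presh universal propery} (recall $\Sub_{\mE}$ is a tripos by \Cref{ex:subobject on topos} and $\presh{P}$ is a topos by \Cref{thm:characterisation tripostotopos full ex comp}). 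This exhibits $\presh{(-)}\colon\exists\mbox{-}\mathsf{LexDoc}\to\mathsf{Top}_{reg}$ as left adjoint to $\Sub_{(-)}$, with pseudo-naturality of the equivalence inherited verbatim from \Cref{thm:presh universal propery}.
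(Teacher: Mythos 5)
Your overall strategy---restricting the biadjunction of \Cref{thm:presh universal propery} along the inclusions, using $(1)\Rightarrow(4)$ of \Cref{thm:comp ex tripos is equivalent to} for the object part of $\presh{(-)}$, and observing that regular functors between exact categories automatically preserve the exact structure---is exactly the derivation the paper intends (it offers no separate proof; the statement is meant as an immediate consequence of those two results). But the step you single out as ``the main obstacle'' is not merely hard: it is false. You claim that $\Sub_{\mE}$ is $\exists$-supercompactifiable for every topos $\mE$, whereas the paper itself records the counterexample: the subobject doctrine $\doctrine{\mathcal{B}}{\Sub_{\mathcal{B}}}$ on an elementary topos $\mathcal{B}$ with no generic proof is a tripos that is \emph{not} $\exists$-supercompactifiable. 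Indeed, since $\Sub_{\mE}$ is a full tripos (\Cref{ex:subobject on topos}), \Cref{thm:main} (or \Cref{remark: fondamentale}, by which a generic proof for $\mG_{\Sub_{\mE}}$ would descend to one for $\mE$) shows that $\Sub_{\mE}$ is $\exists$-supercompactifiable if and only if $\mE$ has weak dependent products and a generic proof; local cartesian closure supplies the former, but elementary toposes need not possess generic proofs. Your sketched construction founders at precisely this point: the subobject classifier $\Omega$ classifies \emph{monomorphisms}, while a generic proof must weakly classify \emph{arbitrary} morphisms $\freccia{Y}{f}{X}$ up to mutual factorization, and no single object of a general topos encodes the fibres of arbitrary maps. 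Were your construction correct, it would prove that every elementary topos has a generic proof, contradicting the example above and Menni's analysis (\Cref{cor: Menni chararcterisation}), by which $\exlex{\mE}$ fails to be a topos exactly when the generic proof is missing.

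Consequently the last equality in your final display, $\fullprDoc(P,\Sub_{\mE})=\exists\mbox{-}\mathsf{LexDoc}(P,\Sub_{\mE})$, does not typecheck in general, and the adjunction cannot be read as an honest 2-adjunction in which $\Sub_{(-)}$ lands inside the sub-2-category of $\exists$-supercompactifiable doctrines. The corollary's own wording (``the functor sending an exact category $\mC$ to the doctrine $\Sub_{\mC}$ of its subobjects'') points to the intended reading: keep the hom-equivalence of \Cref{thm:presh universal propery}, namely $\mathsf{Top}_{reg}(\presh{P},\mE)\simeq \fullprDoc(P,\Sub_{\mE})$ for $P$ $\exists$-supercompactifiable and $\mE$ a topos, i.e.\ an adjunction \emph{along} the inclusions $\exists\mbox{-}\mathsf{LexDoc}\hookrightarrow\fullprDoc$ and $\mathsf{Top}_{reg}\hookrightarrow\Excat$ (a relative biadjunction), which needs nothing beyond \Cref{thm:comp ex tripos is equivalent to} to ensure $\presh{(-)}$ lands in toposes. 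Your first and third paragraphs survive under this weaker reading; the second paragraph should be deleted rather than completed.
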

\begin{cor}\label{remark: fondamentale}
    If a lex primary doctrine $\doctrine{\mC}{P}$ happens to be $\exists$-supercompactifiable, then $\doctrine{\mC}{\Psi_{\mC}}$ is a tripos, and hence $\exlex{\mC}$ is a topos.
\end{cor}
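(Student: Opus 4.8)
The plan is to reduce the statement to a transfer of structure from the category of points $\mG_P$ to the base $\mC$. By \Cref{thm:weak suobject tripos}, the weak subobject doctrine $\Psi_{\mC}$ is a full tripos precisely when $\mC$ has weak dependent products and a generic proof, and once this is known \Cref{ex: exact comp of weak sub 1} together with \Cref{cor:tripos-to-tops of a tripos is topos} (or, equivalently, Menni's \Cref{cor: Menni chararcterisation}) yields that $\exlex{\mC}\equiv\sT_{\Psi_{\mC}}$ is a topos. Hence the whole corollary follows as soon as I show that $\mC$ itself has weak dependent products and a generic proof. By \Cref{def:exists-sheaf tripos}, the hypothesis that $P$ is $\exists$-supercompactifiable is exactly the assertion that $\mG_P$ carries these two structures, so the task is to push them down from $\mG_P$ to $\mC$.

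The tool for the transfer is the pair of functors relating the two categories: the forgetful functor $U\colon\mG_P\to\mC$, $(A,\alpha)\mapsto A$, and the canonical section $I_{\mC}\colon\mC\to\mG_P$, $A\mapsto (A,\top)$, introduced before \Cref{thm: full existential doctrine as weak subobject}. First I would record the three facts on which everything rests: $U\,I_{\mC}=\id_{\mC}$ (indeed $U\dashv I_{\mC}$, with $U$ faithful), and both $U$ and $I_{\mC}$ preserve finite limits. Preservation by $U$ is immediate from the description of limits in \Cref{prop: G_P has finite limits}, since the underlying object of a limit in $\mG_P$ is the corresponding limit in $\mC$; preservation by $I_{\mC}$ holds because pulling $\top$ back and meeting copies of $\top$ again yields $\top$. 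In particular both functors preserve pullbacks.

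For the generic proof I would show that if $\theta\colon(\Theta,\vartheta)\to(\Lambda,\lambda)$ is generic in $\mG_P$, then $U\theta\colon\Theta\to\Lambda$ is generic in $\mC$: given $f\colon Y\to X$ in $\mC$, apply $I_{\mC}$ to get $I_{\mC}f\colon(Y,\top)\to(X,\top)$, use genericity in $\mG_P$ to obtain a classifying map $v\colon(X,\top)\to(\Lambda,\lambda)$ with the mutual factorizations of $I_{\mC}f$ and $v^{\ast}\theta$ over $(X,\top)$, and then apply $U$; since $U$ preserves the defining pullback one has $U(v^{\ast}\theta)=(Uv)^{\ast}(U\theta)$, while $U I_{\mC}f=f$, so $Uv$ witnesses genericity of $U\theta$ for $f$. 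For weak dependent products I would argue symmetrically: given $f\colon X\to J$ and $g\colon J\to I$ in $\mC$, form the weak dependent product of $I_{\mC}f$ along $I_{\mC}g$ in $\mG_P$ (available by hypothesis) and apply $U$ to its defining diagram, which, as $U$ preserves pullbacks, produces a diagram of the correct shape over $I$. To verify its weak universal property in $\mC$, I would lift an arbitrary competing diagram through $I_{\mC}$ — using that $I_{\mC}$ preserves the pullback $Z'\times_I J$, so that the lift is again a legitimate test diagram in $\mG_P$ — invoke the weak universal property in $\mG_P$ to obtain the comparison maps, and finally push them back down along $U$.

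The only delicate point, and the step I expect to be the main obstacle, is the bookkeeping in the weak universal property for dependent products: one must check that applying $I_{\mC}$ to a competing $\mC$-diagram really yields a diagram of the prescribed pullback shape in $\mG_P$, so that the $\mG_P$-universal property applies, and that the comparison maps it returns descend under $U$ to maps satisfying the required commutativities $e'=ek$ and $h'=hw$. Both are formal consequences of $U$ and $I_{\mC}$ preserving finite limits together with $U I_{\mC}=\id_{\mC}$, but they are exactly where the interplay of the two functors must be handled carefully; everything else is routine.
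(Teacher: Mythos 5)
Your proposal is correct and follows essentially the same route as the paper, whose proof is the one-line observation that the weak dependent products and the generic proof of $\mG_P$ induce the corresponding structures on $\mC$, after which \Cref{thm:weak suobject tripos} and Menni's \Cref{cor: Menni chararcterisation} conclude. Your transfer argument via the forgetful functor $U$ and the section $I_{\mC}$ (with $U I_{\mC}=\id_{\mC}$ and both functors preserving the relevant pullbacks) is precisely the verification the paper leaves implicit, and it checks out.
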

\begin{proof}
    This happens because the weak dependent products and the generic proof of $\mG_P$ induce weak dependent products and a generic proof for $\mC$
\end{proof}
Combining the previous remark with \Cref{thm:comp ex tripos is equivalent to}, \Cref{prop:useful_adjunction} and \Cref{thm:characterisation tripostotopos full ex comp} we obtain the following:
\begin{cor}
Let $\doctrine{\mC}{P}$ be a $\exists$-supercompactifiable doctrine. The we have a geometric inclusion of triposes $\Psi_{\mC}\to \compexfull{P}$. Moreover, the right adjoint is a full existential morphism.
\end{cor}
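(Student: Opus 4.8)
The plan is to obtain the statement by combining the results already at our disposal, observing that $\exists$-supercompactifiability is exactly what is needed to upgrade the purely formal adjunction of \Cref{prop:useful_adjunction} to an adjunction between \emph{triposes}. First I would check that both doctrines appearing in the claimed inclusion are full triposes. By \Cref{remark: fondamentale}, the assumption that $\doctrine{\mC}{P}$ is $\exists$-supercompactifiable forces $\doctrine{\mC}{\Psi_{\mC}}$ to be a tripos; and by the equivalence $(1)\Leftrightarrow(3)$ of \Cref{thm:comp ex tripos is equivalent to}, the same assumption makes $\doctrine{\mC}{\compexfull{P}}$ a full tripos. This is the only point at which the hypothesis is genuinely used.

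Next I would invoke \Cref{prop:useful_adjunction} to produce the adjunction of full existential doctrines
\[ (\id_{\mC},\bar{\mathfrak{b}}) \dashv (\id_{\mC},\mathfrak{b}), \]
whose counit is an isomorphism, with right adjoint $\freccia{\Psi_{\mC}}{(\id_{\mC},\mathfrak{b})}{\compexfull{P}}$ and left adjoint $\freccia{\compexfull{P}}{(\id_{\mC},\bar{\mathfrak{b}})}{\Psi_{\mC}}$. Since a full existential morphism is in particular a lex primary morphism, $(\id_{\mC},\mathfrak{b})$ is a lex primary morphism between the two triposes identified above, it is equipped with a left adjoint, and the counit of this adjunction is an iso. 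By the very definition of geometric morphism and geometric embedding of triposes, this exhibits $(\id_{\mC},\mathfrak{b})$ as a geometric inclusion $\Psi_{\mC}\to\compexfull{P}$, which---being the morphism that carries a left adjoint---is precisely the right adjoint of the displayed adjunction.

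For the final clause I would emphasize that the adjunction of \Cref{prop:useful_adjunction} is genuinely an adjunction \emph{inside} the 2-category $\EED$: as in its proof, it is obtained by applying the 2-functor $\compexfull{(-)}$ of \Cref{thm: compex P is existential and RC} to the trivial adjunction $\bar{\mathfrak{t}}\dashv\mathfrak{t}$ relating $\top$ and $P$, and a 2-functor sends both legs of an adjunction into its codomain. Hence $(\id_{\mC},\mathfrak{b})$ is a \emph{full} existential morphism, not merely a lex primary one. I expect this to be the only genuine subtlety: by the remark following \Cref{cor geometric morphism of triposes induce geometric morphisms}, the left adjoint of any geometric morphism of triposes is automatically full existential, whereas its right adjoint need not be---and here it is, exactly because both halves of the adjunction arise as images under $\compexfull{(-)}$.
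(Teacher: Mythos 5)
Your proof is correct and takes essentially the same route as the paper, which derives this corollary without further argument by citing exactly the results you combine: \Cref{remark: fondamentale} and \Cref{thm:comp ex tripos is equivalent to} to see that $\Psi_{\mC}$ and $\compexfull{P}$ are full triposes, and \Cref{prop:useful_adjunction} for the adjunction with invertible counit. Your closing observation---that the right adjoint is full existential because the whole adjunction arises by applying the 2-functor $\compexfull{(-)}$ to $\bar{\mathfrak{t}}\dashv\mathfrak{t}$, even though right adjoints of geometric morphisms of triposes need not be full existential in general---is precisely the content of the paper's proof of \Cref{prop:useful_adjunction}, so no gap remains.
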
 

\begin{cor}\label{cor:geometric embedding toposes exlex}
Let $\doctrine{\mC}{P}$ be a $\exists$-supercompactifiable doctrine. Then we have a geometric embedding of toposes $\exlex{\mC}\to \presh{P}$ whose right adjoint is a morphism of exact categories.
\end{cor}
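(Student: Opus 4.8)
The plan is to obtain the geometric embedding by transporting, through the tripos-to-topos construction, the geometric inclusion of triposes produced by the previous corollary, and then to read off exactness of the right adjoint from the fact that the relevant adjunction already lives in $\Excat$.

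First I would record the consequences of $\exists$-supercompactifiability. Since $\doctrine{\mC}{P}$ is $\exists$-supercompactifiable, \Cref{thm:comp ex tripos is equivalent to} yields that $\doctrine{\mC}{\compexfull{P}}$ is a full tripos and that $\presh{P}$ is a topos, while \Cref{remark: fondamentale} yields that $\doctrine{\mC}{\Psi_{\mC}}$ is a tripos and $\exlex{\mC}$ is a topos. In particular both $\Psi_{\mC}$ and $\compexfull{P}$ are genuine triposes, so the functoriality of the tripos-to-topos construction on geometric morphisms of triposes (\Cref{cor geometric morphism of triposes induce geometric morphisms}) becomes applicable to them.

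Next I would feed the geometric inclusion of triposes $\Psi_{\mC}\to\compexfull{P}$ supplied by the preceding corollary into \Cref{cor geometric morphism of triposes induce geometric morphisms}, obtaining a geometric embedding of toposes $\sT_{\Psi_{\mC}}\to\sT_{\compexfull{P}}$. Transporting this along the equivalences $\sT_{\Psi_{\mC}}\equiv\exlex{\mC}$ of \Cref{ex: exact comp of weak sub 1} and $\sT_{\compexfull{P}}\equiv\presh{P}$ of \Cref{thm:characterisation tripostotopos full ex comp} then produces a geometric embedding $\exlex{\mC}\to\presh{P}$, whose direct image is the full and faithful inclusion and whose inverse image is the reflector $\presh{P}\to\exlex{\mC}$. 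To see that the right adjoint (the direct image) is a morphism of exact categories I would identify this adjunction with the one provided by \Cref{cor_C_exlex_adj_T_P_exists}: that corollary gives, for every lex primary doctrine, an adjunction of exact categories with counit an isomorphism, built by applying the exact-completion 2-functor to the doctrine adjunction of \Cref{prop:useful_adjunction}. Because this 2-functor lands in $\Excat$, both of its adjoints are exact functors, and the counit being an isomorphism shows that the right adjoint $\exlex{\mC}\to\presh{P}$ is full and faithful, confirming simultaneously that it is a morphism of exact categories and that the geometric morphism is an embedding.

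The main obstacle I anticipate is the bookkeeping that reconciles the two levels at which the adjunction appears. One must check that the geometric-morphism adjunction obtained from \Cref{cor geometric morphism of triposes induce geometric morphisms} coincides, under the equivalences above, with the exact-category adjunction of \Cref{cor_C_exlex_adj_T_P_exists}, and in particular that the inclusion $\exlex{\mC}\to\presh{P}$ plays the role of the direct image (right adjoint) rather than that of the inverse image. This identification is exactly what the statement needs: a priori the direct image of a geometric embedding preserves finite limits but need not preserve regular epimorphisms, and it is precisely the exact-completion description of \Cref{cor_C_exlex_adj_T_P_exists} that upgrades it to a genuine exact functor, hence to a morphism of exact categories.
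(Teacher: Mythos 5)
Your proposal is correct and follows essentially the same route the paper intends: the paper derives this corollary (stated without an explicit proof) precisely by applying \Cref{cor geometric morphism of triposes induce geometric morphisms} to the geometric inclusion of triposes $\Psi_{\mC}\to\compexfull{P}$ from the preceding corollary, transporting along $\sT_{\Psi_{\mC}}\equiv\exlex{\mC}$ and $\sT_{\compexfull{P}}\equiv\presh{P}$, and reading off exactness of the right adjoint from the adjunction of exact categories in \Cref{cor_C_exlex_adj_T_P_exists}. Your identification of the two adjunctions is also sound, since both arise by applying the exact-completion 2-functor to the doctrine adjunction of \Cref{prop:useful_adjunction}, whose two legs are full existential morphisms.
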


\begin{example}
By \Cref{ex: real tripos is an existential comp}, we have that the lex primary doctrine $\doctrine{\set}{\pca{A}^{(-)}}$  defined in \Cref{ex: primary doctrine pca} is $\exists$-supercompactifiable, since  its full existential completion is the realizability tripos $\doctrine{\set}{\mathcal{P}}$. In this case, the geometric embedding of \Cref{cor:geometric embedding toposes exlex} is exactly well-known  geometric inclusion $\set \to \mathsf{RT}(\pca{A})$.
\end{example}

  \begin{example}
Let $\supercomp$ be an inf-semilattice. By \Cref{ex:supercoherent localic doctrine is full ex comp}, we have that the lex primary doctrine $\doctrine{\set}{\supercomp^{(-)}}$ is $\exists$-supercompactifiable, because its full existential completion is the localic tripos $\doctrine{\set}{D(\supercomp)}$.  In this case, the geometric inclusion of \Cref{cor:geometric embedding toposes exlex} is exactly  geometric inclusion $\set \to \sh{D(\supercomp)}$.
	
  \end{example}

\begin{example} By \Cref{cor: Menni chararcterisation} and \Cref{thm:weak suobject tripos}	we have that every trivial doctrine $\doctrine{\mC}{\top}$ is $\exists$-supercompactifiable tripos if and only if $\mC$ has weak dependent products and a generic proof.
\end{example}
\begin{example}
    A non-trivial example of a lex primary doctrine that is non $\set$-based is the elementary instance reducibility doctrine presented in \cite[Def 4.1]{maschitrottao2025}, since its full existential completion is a tripos \cite[Thm. 4.9]{maschitrottao2025}. This is a doctrine whose base category is that of partitioned assemblies, and whose full existential completion abstract the notion of extended Weihrauch degrees.  In this case, the geometric embedding of \Cref{cor:geometric embedding toposes exlex} provides exactly the geometric inclusion of the effective topos into the topos of extended Weihrauch degrees proved in \cite[Thm. 6.8]{maschitrottao2025}.
\end{example}

It is important to mention that within the literature of categorical realizability, there are already two results which can be seen as instances of the general theorem   \Cref{thm:comp ex tripos is equivalent to}, one provided by P. Hofstra in \cite{Hofstra2006} and the other shown by J. Frey in \cite{frey2024}. 
These are two combinatorial characterizations of $\set$-indexed preorders associated with suitable pre-realizability notions whose full existential completion is a tripos. 
While in both these works the main focus is on the features of 
pre-realizability notion which is taken as starting point for defining doctrines and triposes (basic combinatory objects and  filtered ordered combinatory algebras in \cite{Hofstra2006}, uniform preorder and discrete combinatory object in \cite{frey2024}), our characterization is purely categorical and designed for arbitrary lex primary doctrines.

\section{A characterization of $\exists$-supercompactifiable triposes}
In many concrete examples, showing that a given tripos $\doctrine{\mC}{P}$ is a $\exists$-supercompactifiable tripos may be tricky.

The main purpose of this section is to provide a simple characterization of $\exists$-supercompactifiable tripos, namely 
\Cref{thm:main} where we prove that a tripos is $\exists$-supercompactifiable if and only if its base category has weak dependent products and a generic proof. This result is useful also to identify examples of triposes which are not $\exists$-supercompactifiable.

We will then deduce that {\it  every $\set$-base full tripos is a $\exists$-supercompactifiable tripos} by a crucial use of the axiom of choice in the meta-theory.

We first study when a category of points has weak dependent products. For sake of generality, we will present all the results with the minimal assumption on the doctrines.
\begin{lemma}\label{prop: properties of weak dep. prod}
Let $\doctrine{\mC}{P}$ be be a implicational and universal doctrine whose base category has finite limits.
If the diagram

\[\begin{tikzcd}
	(X,\alpha)  & (E,P_{g^*(h')}(\beta)\wedge P_{(h')^*g}(\sigma'))  & (Z',\sigma') \\
	& (J,\beta)  & (I,\gamma)
	\arrow["e'"',from=1-2, to=1-1]
	\arrow[from=1-2, to=1-3]
	\arrow[from=1-2, to=2-2]
	\arrow["g"',from=2-2, to=2-3]
	\arrow["h' ",from=1-3, to=2-3]
	\arrow["f"',from=1-1, to=2-2]
	\arrow["\scalebox{1.6}{$\lrcorner$}"{anchor=center, pos=0.1}, shift left=3, draw=none, from=1-2, to=2-2]
\end{tikzcd}\]
commutes in $\mG_P$, then we have that
\[\sigma'\leq \forall_{(h')^*g}(P_{g^*(h')}(\beta)\rightarrow P_{e'}(\alpha))\wedge P_{h'}(\gamma).\]
\end{lemma}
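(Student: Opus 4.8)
The plan is to split the right-hand side into its two conjuncts and verify each separately, reducing everything to the two morphism conditions that can be read directly off the given diagram in $\mG_P$.

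The second conjunct is immediate. The arrow $h'$ appearing in the diagram is by hypothesis a morphism $\freccia{(Z',\sigma')}{h'}{(I,\gamma)}$ of $\mG_P$, so by the definition of the category of points we have $\sigma'\leq P_{h'}(\gamma)$, which is exactly the second conjunct.

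For the first conjunct I would transpose backwards through the two adjunctions available in an implicational and universal doctrine. Since $P$ is universal, reindexing along the pullback projection $\freccia{E}{(h')^*g}{Z'}$ has a right adjoint $\forall_{(h')^*g}$, so that the inequality
\[\sigma'\leq \forall_{(h')^*g}\bigl(P_{g^*(h')}(\beta)\rightarrow P_{e'}(\alpha)\bigr)\]
is equivalent to $P_{(h')^*g}(\sigma')\leq P_{g^*(h')}(\beta)\rightarrow P_{e'}(\alpha)$ in the fibre $P(E)$. Since $P$ is implicational, this in turn is equivalent, by the defining adjunction of the Heyting implication in $P(E)$, to
\[P_{(h')^*g}(\sigma')\wedge P_{g^*(h')}(\beta)\leq P_{e'}(\alpha).\]
To obtain this last inequality I would read off the evaluation arrow: $e'$ is a morphism $\freccia{(E,P_{g^*(h')}(\beta)\wedge P_{(h')^*g}(\sigma'))}{e'}{(X,\alpha)}$ of $\mG_P$, which by the definition of $\mG_P$ means precisely $P_{g^*(h')}(\beta)\wedge P_{(h')^*g}(\sigma')\leq P_{e'}(\alpha)$; commuting the meet gives exactly what is needed. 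Conjoining the two parts yields the claim.

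I do not expect a substantial obstacle: the argument is pure transposition along the implication and universal-quantifier adjunctions, using only that the two structural arrows $e'$ and $h'$ are morphisms of $\mG_P$. The only points demanding care are keeping the pullback-projection notation $g^*(h')$ and $(h')^*g$ consistent with the convention fixed for the full existential completion, and noting that the fibre assigned to $E$ is by construction the meet $P_{g^*(h')}(\beta)\wedge P_{(h')^*g}(\sigma')$, so that the morphism condition witnessed by $e'$ delivers precisely the meet inequality required.
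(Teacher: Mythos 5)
Your proof is correct and takes essentially the same route as the paper's: the second conjunct is read off the morphism condition on $h'$, and the first conjunct is obtained from the morphism condition on $e'$ by transposing through the Heyting-implication and universal-quantifier adjunctions, a step the paper compresses into a single ``because $P$ is universal and implicational''. The only difference is that you spell out the two transpositions explicitly, which is a harmless elaboration rather than a different argument.
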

\begin{proof}
By definition, we have that $\freccia{(Z',\sigma')}{h'}{(I,\gamma)}$ is an arrow of $\mG_P$ implies $\sigma'\leq P_{h'}(\gamma)$. Similarly,  since $\freccia{(E,P_{g^*(h')}(\beta)\wedge P_{(h')^*g}(\sigma'))}{e'}{(X,\alpha)}$ is an arrow of $\mG_P$ we have that $P_{g^*(h')}(\beta)\wedge P_{(h')^*g}(\sigma')\leq P_{e'}(\alpha)$, and hence $\sigma'\leq \forall_{(h')^*g}(P_{g^*(h')}(\beta)\rightarrow P_{e'}(\alpha))$, because $P$ is universal and implicational. Therefore we can conclude that $\sigma'\leq \forall_{(h')^*g}(P_{g^*(h')}(\beta)\rightarrow P_{e'}(\alpha))\wedge P_{h'}(\gamma).$
\end{proof}
\begin{theorem}\label{thm: weak dep. product of G_P}
Let $\doctrine{\mC}{P}$ be a lex primary doctrine whose base category has weak dependent products. If $P$ is implicational and universal, then $\mG_P$ has weak dependent products. In particular, if

\[\begin{tikzcd}
	X & E & Z \\
	& J & I
	\arrow["e"', from=1-2, to=1-1]
	\arrow["f"', from=1-1, to=2-2]
	\arrow["g"', from=2-2, to=2-3]
	\arrow["h", from=1-3, to=2-3]
	\arrow[from=1-2, to=2-2]
	\arrow[from=1-2, to=1-3]
	\arrow["\scalebox{1.6}{$\lrcorner$}"{anchor=center, pos=0.1}, shift left=3, draw=none, from=1-2, to=2-2]
\end{tikzcd}\]
is a weak dependent product of $f$ along $g$ in $\mC$, then the diagram

\[
\begin{tikzcd}
	(X,\alpha)  & (E,P_{g^*h}(\beta)\wedge P_{h^*g}(\sigma))  & (Z,\sigma) \\
	& (J,\beta)  & (I,\gamma)
	\arrow["e"',from=1-2, to=1-1]
	\arrow[from=1-2, to=1-3]
	\arrow[from=1-2, to=2-2]
	\arrow["g"',from=2-2, to=2-3]
	\arrow["h ",from=1-3, to=2-3]
	\arrow["f"',from=1-1, to=2-2]
	\arrow["\scalebox{1.6}{$\lrcorner$}"{anchor=center, pos=0.1}, shift left=3, draw=none, from=1-2, to=2-2]
\end{tikzcd}
\]
with 
\[\sigma:= \forall_{h^*g}(P_{g^*h}(\beta)\rightarrow P_e(\alpha))\wedge P_h(\gamma)\]
is a weak dependent product of $\freccia{(X,\alpha)}{f}{(J,\beta)}$ along  $\freccia{(J,\beta)}{g}{(I,\gamma)}$ in
$\mG_P$.
\end{theorem}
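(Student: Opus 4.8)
The plan is to establish the two requirements of \Cref{def: weak dependent products}: that the displayed diagram is a bona fide diagram of the correct shape in $\mG_P$, and that it is weakly terminal among such diagrams. By \Cref{prop: G_P has finite limits} the forgetful functor $\mG_P\to\mC$ creates finite limits, so the pullback of $g$ and $h$ in $\mG_P$ has underlying object the pullback $E$ in $\mC$ together with fibre element $P_{g^*h}(\beta)\wedge P_{h^*g}(\sigma)$; this is exactly the object placed at the centre of the diagram, so the inner square is a pullback in $\mG_P$ by construction.

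First I would check that the two maps out of $E$ and the map $h$ are arrows of $\mG_P$: the two projections are arrows because their fibre constraints are factors of the meet defining the fibre over $E$, and $\sigma\leq P_h(\gamma)$ holds since $P_h(\gamma)$ is one of the two conjuncts of $\sigma$. The only nontrivial check is that $e$ is an arrow, i.e. $P_{g^*h}(\beta)\wedge P_{h^*g}(\sigma)\leq P_e(\alpha)$. Here I would use that $P$ is universal, so the counit yields $P_{h^*g}\forall_{h^*g}(P_{g^*h}(\beta)\rightarrow P_e(\alpha))\leq P_{g^*h}(\beta)\rightarrow P_e(\alpha)$; since reindexing preserves meets and $P$ is implicational, $P_{g^*h}(\beta)\wedge P_{h^*g}(\sigma)\leq P_{g^*h}(\beta)\wedge\big(P_{g^*h}(\beta)\rightarrow P_e(\alpha)\big)\leq P_e(\alpha)$.

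For the universal property I would start from an arbitrary competing diagram with apex $(Z',\sigma')$ as in \Cref{prop: properties of weak dep. prod} (whose inner square, being a pullback in $\mG_P$, exhibits $E'$ as the pullback of $g$ and $h'$ in $\mC$), and feed its underlying $\mC$-diagram into the weak dependent product of $f$ along $g$ in $\mC$. This produces $\freccia{Z'}{w}{Z}$ with $hw=h'$ together with the induced comparison $\freccia{E'}{k}{E}$, which by the shape of the universal property is compatible with both pullback projections, so $g^*h\circ k=g^*(h')$ and $h^*g\circ k=w\circ(h')^*g$, and satisfies $e'=ek$. Pasting the pullback defining $E'$ with the one defining $E$ then shows that the square with top $k$, bottom $w$ and verticals $(h')^*g$, $h^*g$ is itself a pullback. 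The heart of the proof is lifting $w$ to $\mG_P$, i.e. proving $\sigma'\leq P_w(\sigma)$. Applying the Beck--Chevalley condition for $\forall$ (available since $P$ is universal) to this pullback square gives $P_w\forall_{h^*g}=\forall_{(h')^*g}P_k$, and using that reindexing preserves implication (as $P$ is implicational) together with $g^*h\circ k=g^*(h')$ and $ek=e'$ one obtains
\[P_w\big(\forall_{h^*g}(P_{g^*h}(\beta)\rightarrow P_e(\alpha))\big)=\forall_{(h')^*g}\big(P_{g^*(h')}(\beta)\rightarrow P_{e'}(\alpha)\big).\]
Combining this with \Cref{prop: properties of weak dep. prod}, which supplies $\sigma'\leq\forall_{(h')^*g}(P_{g^*(h')}(\beta)\rightarrow P_{e'}(\alpha))\wedge P_{h'}(\gamma)$, and with $P_{h'}(\gamma)=P_w(P_h(\gamma))$, yields $\sigma'\leq P_w(\sigma)$, so $\freccia{(Z',\sigma')}{w}{(Z,\sigma)}$ is an arrow of $\mG_P$.

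It then remains to see that $k$ lifts to $\mG_P$ and that the two triangles commute. For the former, $g^*h\circ k=g^*(h')$ and $h^*g\circ k=w\circ(h')^*g$ reduce the required inequality to $P_{(h')^*g}(\sigma')\leq P_{(h')^*g}(P_w(\sigma))$, which follows from $\sigma'\leq P_w(\sigma)$ by monotonicity of reindexing; the equalities $e'=ek$ and $h'=hw$ transfer verbatim from $\mC$. I expect the main obstacle to be the lifting of $w$: one must simultaneously secure that the comparison $k$ produced by the weak dependent product is the canonical map between the two pullbacks---so that the Beck--Chevalley square commutes and $e'=ek$ hold at once---and that reindexing along $k$ transports the implication correctly, since it is precisely this interplay between \Cref{prop: properties of weak dep. prod} and the Beck--Chevalley condition that forces $\sigma'\leq P_w(\sigma)$.
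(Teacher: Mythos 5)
Your proof is correct and follows essentially the same route as the paper's: well-definedness of the candidate diagram via the two conjuncts of $\sigma$, then the lifting $\sigma'\leq P_w(\sigma)$ obtained by combining \Cref{prop: properties of weak dep. prod} with the Beck--Chevalley condition for $\forall$ applied to the pullback square relating $k$ and $w$. You in fact supply slightly more detail than the paper at two points it glosses over --- the counit/modus-ponens verification that $e$ is an arrow of $\mG_P$, and the explicit reduction showing $k$ lifts (which the paper dismisses as ``straightforward'') --- and you correctly flag that the argument needs $k$ to be the canonical comparison of pullbacks, which is implicit in the paper's (slightly terse) statement of \Cref{def: weak dependent products} and is used in its proof as well.
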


\begin{proof}
First, notice that the diagram is well-defined in $\mG_P$ because, by definition, we have that $\sigma \leq P_h(\gamma)$ and $P_{g^*h}(\beta)\wedge P_{h^*g}(\sigma)\leq P_e(\alpha)$.
Now we show that $\freccia{(Z,\omega)}{h}{(I,\gamma)}$ satisfies the universal property of weak dependent products. Let us consider another diagram
\[\begin{tikzcd}
	(X,\alpha)  & (E,P_{g^*(h')}(\beta)\wedge P_{(h')^*g}(\sigma'))  & (Z',\sigma') \\
	& (J,\beta)  & (I,\gamma).
	\arrow["e'"',from=1-2, to=1-1]
	\arrow[from=1-2, to=1-3]
	\arrow[from=1-2, to=2-2]
	\arrow["g"',from=2-2, to=2-3]
	\arrow["h' ",from=1-3, to=2-3]
	\arrow["f"',from=1-1, to=2-2]
	\arrow["\scalebox{1.6}{$\lrcorner$}"{anchor=center, pos=0.1}, shift left=3, draw=none, from=1-2, to=2-2]
\end{tikzcd}\]
Since $\freccia{Z}{h}{I}$ is a weak dependent product of $f$ along $g$ in $\mC$, there exists an arrow $\freccia{Z'}{w}{Z}$ (and an arrow $\freccia{E'}{k}{E}$) such that the diagram 

\[\begin{tikzcd}
	&& {E'} & {Z'} \\
	X & E & Z \\
	& J & I
	\arrow["e"', from=2-2, to=2-1]
	\arrow["f"', from=2-1, to=3-2]
	\arrow["g"', from=3-2, to=3-3]
	\arrow[from=2-2, to=3-2]
	\arrow["k", dashed, from=1-3, to=2-2]
	\arrow[from=1-3, to=1-4]
	\arrow["h", from=2-3, to=3-3]
	\arrow[curve={height=-18pt}, from=1-3, to=3-2]
	\arrow["{h'}", curve={height=-18pt}, from=1-4, to=3-3]
	\arrow["w", dashed, from=1-4, to=2-3]
	\arrow["{e'}"', curve={height=12pt}, from=1-3, to=2-1]
	\arrow["\scalebox{1.6}{$\lrcorner$}"{anchor=center, pos=0.1}, shift left=3, draw=none, from=2-2, to=3-2]
	\arrow[from=2-2, to=2-3, crossing over]
\end{tikzcd}\]
commutes.  We have to show that the arrow $\freccia{Z'}{w}{Z}$ induces an arrow $\freccia{(Z',\sigma')}{w}{(Z,\sigma)}$ in $\mG_P$, i.e. that $\sigma'\leq P_w(\sigma)$. By \Cref{prop: properties of weak dep. prod} we have that
\[\sigma'\leq \forall_{(h')^*g}(P_{g^*(h')}(\beta)\rightarrow P_{e'}(\alpha))\wedge P_{h'}(\gamma).\]
Hence, since $h'=hw$ and $e'=ek$, we have that
\[\sigma'\leq \forall_{(hw)^*g}(P_{g^*(hw)}(\beta)\rightarrow P_{ek}(\alpha))\wedge P_{hw}(\gamma).\]
Therefore, since we have that $g^*(hw)=(g^*(h))k$, we have that 
\[ \sigma'\leq \forall_{(hw)^*g}P_k(P_{g^*h}(\beta)\rightarrow P_{e}(\alpha))\wedge P_{hw}(\gamma).\]
Now, by Beck-Chevalley condition (and the standard property of pullbacks), we have that $\forall_{(hw)^*(g)}P_k= P_w \forall_{h^*(g)}$, and hence we can conclude 
\[ \sigma'\leq P_w( \forall_{h^*(g)}(P_{g^*(h)}(\beta)\rightarrow P_{e}(\alpha))\wedge P_{h}(\gamma))=P_w(\sigma).\]
Finally, it is straightforward that the arrow $k$ is well-defined in $\mG_P$.
\end{proof}

\begin{example}
The category of points of any $\set$-base tripos has weak dependent products.
\end{example}
\begin{example}
	The category of points of the doctrine $\doctrine{\mC}{\Psi_{\mC}}$ has weak dependent products whenever $\mC$ has them.
\end{example}

Now we are going to study the generic proof in the category of points:
\begin{theorem}\label{thm: G_P has generic proof}
    Let $\doctrine{\mC}{P}$ be a lex primary doctrine. If $P$ has a weak predicate classifier and $\mC$ has weak dependent products and a generic proof then $P$ is a $\mG_P$ has a generic proof.
\end{theorem}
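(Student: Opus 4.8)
The plan is to construct a generic proof of $\mG_P$ explicitly, by combining the generic proof $\freccia{\Theta}{\theta}{\Lambda}$ of the base $\mC$ with the weak predicate classifier $(\Omega,\in)$ of $P$. Since $\mC$ has weak dependent products it has weak exponentials, so I may form the weak exponential $\Lambda^{\Omega}$ together with its evaluation $\freccia{\Lambda^{\Omega}\times\Omega}{\mathsf{ev}}{\Lambda}$. The candidate generic proof is a morphism $\freccia{\Theta'}{\theta'}{\Lambda'}$ of $\mG_P$ defined by $\Lambda':=(\Lambda^{\Omega},\top)$, by taking as underlying object of $\Theta'$ the pullback $\Theta'_0:=(\Lambda^{\Omega}\times\Omega)\times_{\Lambda}\Theta$ of $\theta$ along $\mathsf{ev}$, equipped with the predicate $P_{\pi_\Omega}(\in)$, where $\pi_\Omega\colon\Theta'_0\to\Omega$ is the evident projection, and by letting $\theta'$ be the projection $\Theta'_0\to\Lambda^{\Omega}$. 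The guiding intuition is that a point of $\Lambda^{\Omega}$ names a ``subobject of $\Omega$'', and the fibre of $\theta'$ over it is exactly that subobject carried by the universal predicate $\in$; this is what allows a single object to classify fibres of arbitrary shape carrying arbitrary $P$-predicates.

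To verify the universal property I will take an arbitrary arrow $\freccia{(Y,\mu)}{f}{(X,\nu)}$ of $\mG_P$ and produce the classifying map $\freccia{(X,\nu)}{\upsilon_f}{\Lambda'}$. The crucial move is \emph{not} to apply the generic proof of $\mC$ to the underlying arrow $f$ alone, but to the paired arrow $\angbr{f}{\{\mu\}}\colon Y\to X\times\Omega$, where $\{\mu\}\colon Y\to\Omega$ is the classifying map of $\mu$ supplied by the weak predicate classifier. This yields a map $\bar\upsilon\colon X\times\Omega\to\Lambda$ together with factorizations $\varepsilon_1\colon Y\to E$ and $\varepsilon_2\colon E\to Y$ over $X\times\Omega$, where $E:=\bar\upsilon^{*}\Theta$. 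I then let $\upsilon_f$ be the (weak) transpose of $\bar\upsilon$ along $\Lambda^{\Omega}$. A direct computation of the pullback in $\mG_P$, using \Cref{prop: G_P has finite limits} (whose limits are those of $\mC$ with fibrewise meet of predicates), shows that $\upsilon_f^{*}\theta'$ has underlying object $E$ and predicate $P_p(\nu)\wedge P_{\pi}(\in)$, where $p\colon E\to X$ and $\pi\colon E\to\Omega$ are the two projections.

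Finally I will check that $\varepsilon_1$ and $\varepsilon_2$ furnish the two factorizations required by the definition of generic proof, now inside $\mG_P$ over $(X,\nu)$. The key point is that, since $\varepsilon_1$ and $\varepsilon_2$ live over $X\times\Omega$, one has $\pi\varepsilon_1=\{\mu\}$ and $\{\mu\}\varepsilon_2=\pi$; hence $P_{\pi\varepsilon_1}(\in)=\mu$ and $P_{\varepsilon_2}(\mu)=P_{\pi}(\in)$, and together with $\mu\leq P_f(\nu)$ these give precisely the inequalities making $\varepsilon_1\colon(Y,\mu)\to\upsilon_f^{*}\theta'$ and $\varepsilon_2\colon\upsilon_f^{*}\theta'\to(Y,\mu)$ into morphisms of $\mG_P$ over $(X,\nu)$. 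I expect the main obstacle to be exactly the failure of the naive construction: if one classifies only the underlying arrow $f$, the predicate comparison is forced through the composite $\varepsilon_2\varepsilon_1\colon Y\to Y$, which is not the identity and collapses the fibres of $f$, so that $\mu\leq P_{\varepsilon_2\varepsilon_1}(\mu)$ generally fails (this is already visible for $\mC=\set$, where every generic proof records only the support of each fibre). Folding $\{\mu\}$ into the base arrow before invoking the generic proof of $\mC$ is what removes this obstruction, and recognising this is the heart of the argument. All three hypotheses are used: the weak predicate classifier supplies $\Omega$, $\in$ and $\{\mu\}$, the weak dependent products supply the weak exponential $\Lambda^{\Omega}$ and the transpose $\upsilon_f$, and the generic proof of $\mC$ is applied to $\angbr{f}{\{\mu\}}$.
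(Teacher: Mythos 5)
Your proposal is correct and is essentially the paper's own argument: your weak exponential $\Lambda^{\Omega}$ with membership $\mathsf{ev}^{*}\theta$ is precisely the weak power object $\mathrm{P}(\Omega)$ of $\Psi_{\mC}$ that the paper obtains abstractly (via \Cref{thm:weak suobject tripos} and \Cref{rem:weak cart closed power ob iff pred clas}), and classifying the pairing $\angbr{f}{\{\mu\}}$ through the generic proof of $\mC$ is exactly the paper's classification of $\angbr{\{\alpha\}}{f}$ by the power-object structure on $\Omega\times X$. Consequently your candidate $(\Theta'_0,P_{\pi_\Omega}(\in))\to(\Lambda^{\Omega},\top)$ and the mutual factorizations $\varepsilon_1,\varepsilon_2$ coincide, after unwinding, with the paper's generic proof $(\Theta_{\Omega},P_{\pr_{\Omega}\in_{\Omega}}(\in))\to(\mathrm{P}(\Omega),\top_{\mathrm{P}(\Omega)})$ and its arrows $e_2,e_1$, so the verification steps match as well.
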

\begin{proof}
   Let us consider an arrow $\freccia{(Y,\alpha)}{f}{(X,\beta)}$ of $\mG_P$. By definition of weak predicate classifier of $P$, we have an arrow $\freccia{Y}{\{\alpha\}}{\Omega}$ such that $\alpha= P_{\{\alpha\}}(\in)$. Now, by \Cref{thm:weak suobject tripos} we have that $\doctrine{\mC}{\Psi_{\mC}}$ is a full tripos, and hence we have weak power objects. Hence we can consider the arrow  $\freccia{Y}{\angbr{\{\alpha\}}{f}}{\Omega \times X}$ and, by definition of power objects for $\Psi_{\mC}$, we have a commutative diagram
\begin{equation}\label{diag:importante in C}
\begin{tikzcd}[column sep=huge, row sep=huge ]
Y 
  & E
  & \Theta_{\Omega} \\
& \Omega \times X 
  & \Omega \times \mathrm{P}(\Omega) \\
& X 
  & \mathrm{P} (\Omega)
  \arrow["\angbr{\{\alpha\}}{f}"', from=1-1, to=2-2]
\arrow["e_2"',shift right=.5ex, from=1-1, to=1-2]
\arrow["e_1"', shift right=.5ex, from=1-2, to=1-1]
\arrow["a_2", from=1-2, to=1-3]
\arrow["a_1"', from=1-2, to=2-2]
\arrow["\in_{\Omega}", from=1-3, to=2-3]
\arrow["{\mathrm{id}_{\Omega} \times \{\angbr{\{\alpha\}}{f}\}_{\Omega}}"', from=2-2, to=2-3]
\arrow["\pr_X"', from=2-2, to=3-2]
\arrow["\pr_{ \mathrm{P} \Omega}", from=2-3, to=3-3]
\arrow[" \{\angbr{\{\alpha\}}{f}\}_{\Omega}"', from=3-2, to=3-3]
\arrow["f"', bend right=35, from=1-1, to=3-2]
\arrow["\scalebox{1.6}{$\lrcorner$}"{anchor=center, pos=0.15}, shift left=3, draw=none, from=1-2, to=2-2]
\arrow["\scalebox{1.6}{$\lrcorner$}"{anchor=center, pos=0.15}, shift left=3, draw=none, from=2-2, to=3-2]
\end{tikzcd}
\end{equation}
in $\mC$ where the arrows $\freccia{\Theta_{\Omega}}{\in_\Omega}{ \Omega \times \mathrm{P}(\Omega)}$ and $\freccia{X}{ \{\angbr{\{\alpha\}}{f}\}_{\Omega}}{\mathrm{P} (\Omega)}$ are defined by the power objects in $\Psi_{\mC}$.
Now we show that the diagram:
   \[
\begin{tikzcd}[column sep=large, row sep=huge ]
(Y,\alpha) 
  & (E,P_{\pr_Xa_1}(\beta)\wedge P_{\pr_{\Omega}\in_{\Omega}a_2}(\in))
  & (\Theta_{\Omega},P_{\pr_{\Omega}\in_{\Omega}}(\in)) \\
& (\Omega \times X, P_{\pr_X}(\beta))
  & (\Omega \times \mathrm{P}(\Omega),\top_{\Omega \times \mathrm{P}(\Omega)}) \\
& (X,\beta)
  & (\mathrm{P} (\Omega),\top_{\mathrm{P} (\Omega)})
  \arrow["\angbr{\{\alpha\}}{f}"', from=1-1, to=2-2]
\arrow["e_2"',shift right=.5ex, from=1-1, to=1-2]
\arrow["e_1"', shift right=.5ex, from=1-2, to=1-1]
\arrow["a_2", from=1-2, to=1-3]
\arrow["a_1"', from=1-2, to=2-2]
\arrow["\in_{\Omega}", from=1-3, to=2-3]
\arrow["{\mathrm{id}_{\Omega} \times \{\angbr{\{\alpha\}}{f}\}_{\Omega}}"', from=2-2, to=2-3]
\arrow["\pr_X"', from=2-2, to=3-2]
\arrow["\pr_{ \mathrm{P} (\Omega)}", from=2-3, to=3-3]
\arrow[" \{\angbr{\{\alpha\}}{f}\}_{\Omega}"', from=3-2, to=3-3]
\arrow["f"', bend right=35, from=1-1, to=3-2]
\arrow["\scalebox{1.6}{$\lrcorner$}"{anchor=center, pos=0.15}, shift left=3, draw=none, from=1-2, to=2-2]
\arrow["\scalebox{1.6}{$\lrcorner$}"{anchor=center, pos=0.15}, shift left=3, draw=none, from=2-2, to=3-2]
\end{tikzcd}
\]
is a well-defined diagram in $\mG_P$, namely that the arrow  
$$\freccia{(\Theta_{\Omega},P_{\pr_{\Omega}\in_{\Omega}}(\in))}{\pr_{ \mathrm{P} \Omega}\in_{\Omega}}{(\mathrm{P} (\Omega),\top_{\mathrm{P} (\Omega)})}$$
 is a generic proof for $\mG_P$. Notice that we need only to check that $e_1$ and $e_2$ are well-defined arrows of $\mG_P$ because both the pullbacks are well-defined in $\mG_P$ just by definition. 

Now we start by showing that 
 \[P_{\pr_Xa_1}(\beta)\wedge P_{\pr_{\Omega}\in_{\Omega}a_2}(\in)= P_{e_1}(\alpha).\] 
This follows since \eqref{diag:importante in C} is a commutative diagram in $\mC$, and in particular we have that $a_1=\angbr{\{\alpha\}}{f}e_1$ and $\pr_{\Omega}\in_{\Omega}a_2= \pr_{\Omega}( \mathrm{id}_{\Omega} \times \{\angbr{\{\alpha\}}{f}\}_{\Omega})a_1$, hence
\[\pr_{\Omega}\in_{\Omega}a_2=\pr_{\Omega}( \mathrm{id}_{\Omega} \times \{\angbr{\{\alpha\}}{f}\}_{\Omega})\angbr{\{\alpha\}}{f}e_1= \{\alpha\}e_1.\]
In particular, using also the fact that $\alpha= P_{\{\alpha\}}(\in)$, we have that
\[P_{\pr_Xa_1}(\beta)\wedge P_{\pr_{\Omega}\in_{\Omega}a_2}(\in)=P_{\pr_X\angbr{\{\alpha\}}{f}e_1}(\beta)\wedge P_{\{\alpha\}e_1}(\in)=P_{e_1}(P_{f}(\beta)\wedge \alpha)=P_{e_1}(\alpha)\]
where the last equality follows from the assumption that $\freccia{(Y,\alpha)}{f}{(X,\beta)}$ of $\mG_P$, i.e. $\alpha\leq P_f(\beta)$.
Hence, we have that $\freccia{(E,P_{e_1}(\alpha))}{e_1}{(Y,\alpha)}$ is a well-defined arrow of $\mG_P$. 

Now we pass to show that 
\[\alpha= P_{e_1e_2}(\alpha).\]
By definition of weak predicate classifier of $P$, we have that $\alpha=P_{\{\alpha\}}(\in)$. Now, since \eqref{diag:importante in C} is a commutative diagram, 
we have that $\alpha=P_{\pr_{\Omega} a_1e_2}(\in)$
and $\angbr{\{\alpha\}}{f}=a_1e_2$ and $a_1=\angbr{\{\alpha\}}{f}e_1$. Therefore we can conclude that
\[ \alpha = P_{\pr_{\Omega} a_1e_2}(\in)=   P_{\pr_{\Omega} \angbr{\{\alpha\}}{f}e_1 e_2}(\in)= P_{e_1e_2}(P_{\{\alpha\}}(\in)=P_{e_1e_2}(\alpha)= P_{e_2}(P_{e_1}(\alpha)) \]
and hence that  $\freccia{(Y,\alpha)}{e_2}{E,P_{e_1}(\alpha))}$ is a well-defined  arrow of $\mG_P$
\end{proof}

 Now, we are ready to obtain our main result:

\begin{cor}\label{thm:main}
    Let $\doctrine{\mC}{P}$ be a full tripos. Then the following conditions are equivalent:
    \begin{enumerate}
        \item $P$ is a $\exists$-supercompactifiable;
        \item $\mC$ has weak dependent products and a generic proof;
        \item $\doctrine{\mC}{\Psi_{\mC}}$ is a full tripos.
    \end{enumerate}
\end{cor}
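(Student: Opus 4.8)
The plan is to prove the equivalences by closing the cycle $(2)\Leftrightarrow(3)$, $(2)\Rightarrow(1)$, and $(1)\Rightarrow(3)$, relying entirely on the structural results already established for the category of points $\mG_P$. The equivalence $(2)\Leftrightarrow(3)$ is immediate: by \Cref{thm:weak suobject tripos} the weak subobject doctrine $\doctrine{\mC}{\Psi_{\mC}}$ is a full tripos precisely when $\mC$ has weak dependent products and a generic proof, so there is nothing new to argue here and it can be cited directly.

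For $(2)\Rightarrow(1)$, I would first record that a full tripos $P$ is in particular a full first-order hyperdoctrine, so each fibre is a Heyting algebra and every reindexing has a right adjoint $\forall_f$; thus $P$ is \emph{implicational and universal}. Moreover its weak power objects supply a weak predicate classifier (given by $\mathrm{P}1$ and $\in_1$). Assuming $(2)$, i.e.\ that $\mC$ has weak dependent products and a generic proof, I would invoke \Cref{thm: weak dep. product of G_P} to conclude that $\mG_P$ has weak dependent products, and \Cref{thm: G_P has generic proof} to conclude that $\mG_P$ has a generic proof. By \Cref{def:exists-sheaf tripos} this is exactly the assertion that $P$ is $\exists$-supercompactifiable, giving $(1)$.

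For the remaining implication $(1)\Rightarrow(3)$, the task is to \emph{descend} the structure from $\mG_P$ to $\mC$ along the finite-limit-preserving full embedding $\freccia{\mC}{I_{\mC}}{\mG_P}$, $A\mapsto(A,\top_A)$. This is precisely the content of \Cref{remark: fondamentale}: if $P$ is $\exists$-supercompactifiable, then the weak dependent products and generic proof of $\mG_P$ induce weak dependent products and a generic proof on $\mC$, equivalently $\doctrine{\mC}{\Psi_{\mC}}$ is a full tripos, which is condition $(3)$. Combined with $(3)\Leftrightarrow(2)$ this closes the cycle $(1)\Rightarrow(3)\Leftrightarrow(2)\Rightarrow(1)$, so all three conditions coincide.

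The main obstacle is exactly this last descent step, and it is subtle because weak dependent products and generic proofs are only \emph{weakly} universal: one cannot simply invoke preservation of limits by a right adjoint. The point is that $I_{\mC}$ exhibits $\mC$ as a reflective subcategory of $\mG_P$ (with reflector the forgetful projection $\freccia{\mG_P}{}{\mC}$), and one must verify that the chosen weak structures on $\mG_P$, restricted to the objects $(A,\top_A)$, still satisfy the weak universal property against test diagrams lying in the image of $I_{\mC}$. Fullness of $I_{\mC}$ together with preservation of finite limits is what makes the required factorizations descend. Since this verification is already packaged in \Cref{remark: fondamentale}, in the write-up it suffices to cite that corollary.
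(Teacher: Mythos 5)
Your proposal is correct and follows essentially the same route as the paper: the paper proves $(2)\Leftrightarrow(3)$ by \Cref{thm:weak suobject tripos}, $(2)\Rightarrow(1)$ by \Cref{thm: weak dep. product of G_P} and \Cref{thm: G_P has generic proof}, and $(1)\Rightarrow(2)$ by \Cref{remark: fondamentale}, which is exactly your cycle up to a harmless reordering (you route $(1)\Rightarrow(3)$ through the same corollary). Your extra verifications --- that a full tripos is implicational and universal and has a weak predicate classifier via $\mathrm{P}1$ and $\in_1$, and the remarks on descending the weak structure along $I_{\mC}$ --- are correct and merely make explicit hypotheses the paper leaves implicit.
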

\begin{proof}
$1)\Rightarrow 2)$ Follows by \Cref{remark: fondamentale} and \Cref{thm:weak suobject tripos}. $2)\Rightarrow 1) $ follows by \Cref{thm: G_P has generic proof} and \Cref{thm: weak dep. product of G_P}.
 $2) \iff 3)$ follows by \Cref{thm:weak suobject tripos}
\end{proof}
\begin{cor}\label{cor_every_topos_with_splity_epi_tripos_is_sheaf_tripos}
    Every full tripos whose base category is a topos with splitting epis is $\exists$-supercompactifiable.
\end{cor}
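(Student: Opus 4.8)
The plan is to reduce everything to the internal characterization \Cref{thm:main}: since $P$ is a full tripos by hypothesis, it is $\exists$-supercompactifiable exactly when its base category $\mathcal{C}$ has weak dependent products and a generic proof. So I would not touch the category of points $\mG_P$ at all, and instead verify these two properties directly for a topos $\mathcal{C}$ in which every epimorphism splits.

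Weak dependent products are immediate and do not use the splitting-epi hypothesis: a topos is locally cartesian closed, hence has genuine dependent products, and a genuine dependent product is terminal, in particular weakly terminal, among the diagrams of the shape prescribed in \Cref{def: weak dependent products}. Thus every genuine dependent product is a fortiori a weak dependent product.

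The substance of the argument is the generic proof, and this is precisely where splitting epis enter. I would take $\theta\colon 1\to\Omega$ to be the generic subobject $\top$, so that in the notation of the definition $\Theta=1$ and $\Lambda=\Omega$. Given an arbitrary arrow $f\colon Y\to X$, factor it through its image as $Y\twoheadrightarrow\im(f)\hookrightarrow X$, and let $\upsilon_f\colon X\to\Omega$ be the classifying map of the monomorphism $\im(f)\hookrightarrow X$. Pulling the generic subobject back along $\upsilon_f$ returns, up to isomorphism, the image inclusion itself, i.e. $\upsilon_f^*\theta\cong(\im(f)\hookrightarrow X)$. The epimorphic part $Y\twoheadrightarrow\im(f)$ of the factorization supplies the map $e_1\colon Y\to E$ over $X$ witnessing that $f$ factors through $\upsilon_f^*\theta$; and a section of this epimorphism — which exists precisely because every epi splits — supplies the map $e_2\colon E\to Y$ over $X$ witnessing that $\upsilon_f^*\theta$ factors back through $f$. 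Hence $\theta$ is a generic proof.

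With both properties established, \Cref{thm:main} (the equivalence of conditions (1) and (2)) lets me conclude that $P$ is $\exists$-supercompactifiable. The only genuine obstacle is the backward factorization in the generic-proof step, namely producing the map $e_2$ showing that the image monomorphism factors through $f$ itself: this is false for a general topos and holds here solely by the splitting-epi hypothesis. Equivalently, one may package this observation by noting that splitting epis force the poset reflection $\Psi_{\mathcal{C}}(X)$ of each slice to collapse onto $\Sub_{\mathcal{C}}(X)$, so that the weak predicate classifier demanded by \Cref{thm:main} for $\Psi_{\mathcal{C}}$ is furnished directly by the honest subobject classifier $\Omega$.
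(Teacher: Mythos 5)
Your proposal is correct and follows exactly the route the paper intends: the corollary is stated as an immediate consequence of \Cref{thm:main}, and you correctly fill in the (omitted) standard verification that a topos, being locally cartesian closed, has weak dependent products, and that the generic subobject $\top\colon 1\to\Omega$ becomes a generic proof via image factorization once every epi splits. Your closing observation that split epis collapse $\Psi_{\mathcal{C}}$ onto $\Sub_{\mathcal{C}}$ is also accurate and consistent with the paper's remark that a weak predicate classifier for $\Psi_{\mathcal{C}}$ is the same as a generic proof for $\mathcal{C}$.
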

In particular, we can conclude that every tripos as originally introduced in \cite{TT} is $\exists$-supercompactifiable:
\begin{cor}\label{cor_every_set_base_tripos_is_sheaf_tripos}
Every full tripos whose base category is $\set$ (with the axiom of choice) is $\exists$-supercompactifiable. 
\end{cor}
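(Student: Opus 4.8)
The plan is to obtain the statement as an immediate specialization of \Cref{cor_every_topos_with_splity_epi_tripos_is_sheaf_tripos}. First I would observe that $\set$ is the prototypical elementary topos, and that when the metatheory is ZFC every epimorphism of sets -- that is, every surjection -- admits a section; this splitting property is precisely (an equivalent form of) the axiom of choice. Hence $\set$ is a topos in which all epis split, the hypotheses of \Cref{cor_every_topos_with_splity_epi_tripos_is_sheaf_tripos} are satisfied with base category $\mC=\set$, and the conclusion follows at once: every full tripos $\doctrine{\set}{P}$ is $\exists$-supercompactifiable.

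To make the role of choice fully transparent, I would alternatively route the argument through \Cref{thm:main}, for which it suffices to verify its condition $(3)$, namely that the weak subobject doctrine $\doctrine{\set}{\Psi_{\set}}$ is a full tripos. The key step is to identify $\Psi_{\set}$ with the genuine subobject doctrine $\Sub_{\set}$. I would show that, in the poset reflection of the slice $\set/A$, two maps $f\colon Y\to A$ and $g\colon Z\to A$ satisfy $f\le g$ exactly when $\operatorname{im}(f)\subseteq\operatorname{im}(g)$: the forward direction is automatic from functoriality of images, while for the converse one builds a factorization $Y\to Z$ over $A$ by selecting, for each $y\in Y$, a $g$-preimage of $f(y)$, all such fibres being inhabited precisely because $f(y)\in\operatorname{im}(g)$. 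This produces the isomorphism $\Psi_{\set}\cong\Sub_{\set}$, which is a tripos by \Cref{ex:subobject on topos}; applying the implication $(3)\Rightarrow(1)$ of \Cref{thm:main} then gives the result for every full tripos on $\set$.

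I do not expect a genuine obstacle here, since both routes rest entirely on the machinery already established: the substantive mathematics lies in \Cref{thm:main} and \Cref{cor_every_topos_with_splity_epi_tripos_is_sheaf_tripos}, and the present corollary is merely their instantiation at $\mC=\set$. The one point deserving care is isolating exactly where the axiom of choice is invoked -- in the fibrewise selection of $g$-preimages above, equivalently in the splitting of the epimorphism $Z\twoheadrightarrow\operatorname{im}(g)$ -- and remarking that without choice this selection may fail, so that $\Psi_{\set}$ need not coincide with $\Sub_{\set}$ and the argument genuinely breaks down. This is why the statement is explicitly qualified by the assumption of choice in the metatheory.
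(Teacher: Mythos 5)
Your first route is exactly the paper's own argument: the paper derives this corollary as an immediate instance of \Cref{cor_every_topos_with_splity_epi_tripos_is_sheaf_tripos}, noting that under ZFC every surjection of sets splits, so $\set$ is a topos with splitting epis. Your alternative second route is also correct but proceeds differently: instead of verifying condition $(2)$ of \Cref{thm:main} for a topos with split epis (weak dependent products from local cartesian closure, a generic proof from classified images plus splittings), you verify condition $(3)$ directly by exhibiting a natural isomorphism $\Psi_{\set}\cong\Sub_{\set}$ -- using choice to lift an image inclusion $\operatorname{im}(f)\subseteq\operatorname{im}(g)$ to a factorization over the base -- and then invoking \Cref{ex:subobject on topos}. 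What this buys is a completely explicit localization of where AC enters (the fibrewise selection of $g$-preimages, i.e.\ the splitting of $Z\twoheadrightarrow\operatorname{im}(g)$) and a self-contained proof for $\set$ that bypasses the generic-proof construction; what it loses is generality, since the paper's route through split epis covers arbitrary base toposes with that property, of which $\set$ is only one instance.
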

\begin{example}
    Relevant examples of $\set$-based triposes, which are in particular $\exists$-supercompactifiable by \Cref{cor_every_set_base_tripos_is_sheaf_tripos}, are localic and realizability triposes \cite{TT,TTT}, the modified realizability tripos \cite{Hylandmodifiedreal,VANOOSTENmodifiedreal}, the extensional realizability tripos \cite{VANOOSTENereal}, dialectica tripos \cite{Biering2008}, Krivine tripos \cite{STREICHER_2013}.
    \end{example}
    \begin{example}
        For every topos $\mathcal{B}$ with splitting epis, we have that the full tripos $\doctrine{\mathcal{B}}{\Sub_{\mathcal{B}}}$ is $\exists$-supercompactifiable. A relevant example of this class of toposes are presheaf toposes over a discrete category.
    \end{example}

\begin{example}\label{ex_weih_tripos_is_supercomp}
    The extended Weihrauch tripos $\doctrine{\parassembly{\pca{A}}}{\mathfrak{eW}}$  introduced in \cite{maschitrottao2025} is $\exists$-supercompactifiable, since the category of partitioned assemblies $\parassembly{\pca{A}}$ has weak dependent products and a generic proof.
\end{example}

\begin{example}
    Every subobject doctrine   $\doctrine{\mathcal{B}}{\Sub_{\mathcal{B}}}$ on an elementary topos $\mathcal{B}$ with no generic proof provides an example of a tripos that is not $\exists$-supercompactifiable.
\end{example}

\subsection{Closure of $\exists$-supercompactifiable triposes under slicing}
The main purpose of this section is to prove a fundamental-like theorem for $\exists$-supercompactifiable triposes. To achieve this goal, we start by recalling from \cite[Ex. 3.15]{CIOFFO2025} the notion of \emph{slice doctrine}:
\begin{definition}[slice doctrine]\label{ex:slice doctrine}
    Let $\doctrine{\mC}{P}$ be a lex primary doctrine and $X$ be an object of $\mC$. The \textbf{slice doctrine} $\doctrine{\mC/X}{P_{/X}}$ is the functor defined by the assignments $P_{/X}(\frecciasopra{Y}{f}{X}):=P(Y)$ and, for an arrow $\freccia{g}{h}{f}$ in $\mC/X$, $P_{/X}(h):=P_h$.
\end{definition}
\begin{proposition}\label{prop:fundamental thm triposes}
    Let $\doctrine{\mC}{P}$ be a lex primary doctrine and $X$ be an object of $\mC$. Then:
    \begin{enumerate}
        \item if $\mC$ has weak dependent products and a generic proof, $\mC/X$ has weak dependent products and a generic proof;
        \item if $P$ is a  full first order hyperdoctrine, $P_{/X}$ is a full first-order hyperdoctrine;
        \item if $P$ has a weak predicate classifier, $P_{/X}$ has a weak predicate classifier.
    \end{enumerate}
\end{proposition}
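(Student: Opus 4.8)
The plan is to treat the three items separately, using throughout the observation that the slice doctrine is nothing but the restriction of $P$ along the forgetful functor $\freccia{\mC/X}{U}{\mC}$: indeed $P_{/X}(\frecciasopra{Y}{f}{X})=P(Y)=P(Uf)$ and $P_{/X}(h)=P_h=P_{Uh}$, so that $P_{/X}=P\circ U^{\op}$. Since $U$ preserves and reflects pullbacks (more generally it creates connected limits), every fibrewise and pullback-stable piece of structure of $P$ descends to $P_{/X}$ with no new verification, and this is what drives all three arguments.

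Item (1) is already recorded in the excerpt. Weak dependent products in $\mC/X$ are provided by \Cref{rem:weak dep prod implies slice weak dep prod} (from \cite{EMMENEGGER}), and a generic proof in $\mC/X$ is provided by \Cref{rem:generic proof in slices}, namely $\freccia{X\times\Theta}{\id_X\times\theta}{X\times\Lambda}$ over $X$, obtained from a generic proof $\freccia{\Theta}{\theta}{\Lambda}$ of $\mC$. No further argument is needed.

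For item (2), I would check the axioms of \Cref{def:hyperdoctrine} directly through the identification $P_{/X}=P\circ U^{\op}$. Each fibre $P_{/X}(f)=P(Y)$ is a Heyting algebra and each reindexing $P_{/X}(h)=P_h$ is a Heyting morphism because $P$ is a full first-order hyperdoctrine; the left and right adjoints along $h$ are the adjoints $\exists_h\dashv P_h\dashv\forall_h$ of $P$, and Frobenius reciprocity is a fibrewise identity inherited from $P$. The only point requiring attention is Beck-Chevalley: a commuting square in $\mC/X$ is a pullback exactly when its image under $U$ is a pullback in $\mC$, and since $U$ preserves pullbacks the Beck-Chevalley condition for $P_{/X}$ reduces to that of $P$ on the underlying square. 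In particular the binary products of $\mC/X$ are computed as pullbacks in $\mC$, so the full existential structure along product projections of $\mC/X$ is precisely the existential structure of $P$ along the corresponding pullback legs.

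For item (3), I would invoke and briefly reprove \Cref{ex:slice of a tripos is a tripos}. The weak predicate classifier of $P_{/X}$ is the object $\freccia{\Omega\times X}{\pr_X}{X}$ of $\mC/X$ together with $P_{\pr_\Omega}(\in)\in P(\Omega\times X)$, where $(\Omega,\in)$ is the classifier of $P$. Given $\frecciasopra{A}{f}{X}$ and $\alpha\in P_{/X}(f)=P(A)$, the comprehension morphism is $\freccia{A}{\angbr{\{\alpha\}}{f}}{\Omega\times X}$; it is an arrow $f\to\pr_X$ of $\mC/X$ since $\pr_X\angbr{\{\alpha\}}{f}=f$, and it satisfies $P_{\angbr{\{\alpha\}}{f}}(P_{\pr_\Omega}(\in))=P_{\{\alpha\}}(\in)=\alpha$ by functoriality of $P$ and the defining property of $\{\alpha\}$. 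The main—though mild—obstacle in the whole proof is the Beck-Chevalley bookkeeping of item (2), namely keeping track of the fact that products in $\mC/X$ are pullbacks in $\mC$ while $U$ nonetheless preserves and reflects them.
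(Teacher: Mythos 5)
Your proposal is correct and follows essentially the same route as the paper: item (1) by citing the slice stability of weak dependent products and the generic proof $\id_X\times\theta$ over $X$, item (2) by direct inspection of the slice doctrine (the paper simply says it follows by definition, while you usefully spell out that $P_{/X}=P\circ U^{\op}$ and that Beck--Chevalley reduces to that of $P$ because the forgetful functor preserves and reflects pullbacks), and item (3) with exactly the paper's classifier $(\freccia{\Omega\times X}{\pr_X}{X},\,P_{\pr_\Omega}(\in))$ together with the verification $P_{\angbr{\{\alpha\}}{f}}(P_{\pr_\Omega}(\in))=P_{\{\alpha\}}(\in)=\alpha$. No gaps; your extra bookkeeping only makes explicit what the paper leaves implicit.
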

\begin{proof}
(1) Recall from \cite{EMMENEGGER} that if a category has finite limits and weak dependent then every slice category has weak dependent products. Moreover, if $\mC$ has a generic proof  $\freccia{\Theta}{\theta}{\Lambda}$, it is straightforward to check that every slice category $\mC/X$ has a generic proof  given by 
\[\begin{tikzcd}
	{X\times \Theta} && {X\times \Lambda} \\
	& X
	\arrow["{\id_X\times \theta}", from=1-1, to=1-3]
	\arrow["{\pi_X}"', from=1-1, to=2-2]
	\arrow["{\pi_X}", from=1-3, to=2-2]
\end{tikzcd}\]

\noindent
(2) It follows by definition of slice doctrine.

\noindent
(3) If $\doctrine{\mC}{P}$ is a lex primary doctrine with a weak predicate classifier given by an object $\Omega$ and $\in$ element of $P(\Omega)$ then the slice doctrine $\doctrine{\mC/X}{P_{/X}}$ as defined in \Cref{ex:slice doctrine} ha a weak predicate classifier, given by the object $\freccia{\Omega\times X}{\pr_X}{X}$ and the element $P_{\pr_{\Omega}}(\in)$.
\end{proof}
Combining \Cref{prop:fundamental thm triposes} with \Cref{thm:main} we obtain the following result:

\begin{cor}
    If a full tripos $\doctrine{\mC}{P}$ is a $\exists$-supercompactifiable, then $\doctrine{\mC/X}{P_{/X}}$ is a $\exists$-supercompactifiable full tripos for every object $X$ of $\mC$.
 \end{cor}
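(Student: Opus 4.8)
The plan is to reduce everything to \Cref{thm:main} applied twice, with \Cref{prop:fundamental thm triposes} serving as the bridge between the two applications. First I would unpack the hypothesis that $P$ is a $\exists$-supercompactifiable full tripos: by the implication $(1)\Rightarrow(2)$ of \Cref{thm:main}, the base category $\mC$ has weak dependent products and a generic proof. By \Cref{prop:fundamental thm triposes}(1), these two properties are inherited by the slice category, so $\mC/X$ also has weak dependent products and a generic proof.

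Next I would establish that the slice doctrine $P_{/X}$ is itself a full tripos. By \Cref{prop:fundamental thm triposes}(2), since $P$ is a full first-order hyperdoctrine, so is $P_{/X}$; and by \Cref{prop:fundamental thm triposes}(3), since $P$ (being a full tripos) has a weak predicate classifier, $P_{/X}$ has a weak predicate classifier as well. To upgrade this to the full tripos structure I still need weak power objects rather than merely a weak predicate classifier, and here I would invoke \Cref{rem:weak cart closed power ob iff pred clas}: because $\mC/X$ has weak dependent products it is in particular weakly cartesian closed, so the weak predicate classifier of $P_{/X}$ canonically determines weak power objects via $\mathrm{P}A:=\Omega^A$. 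This equips $P_{/X}$ with the structure of a full tripos.

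Finally, with $P_{/X}$ now known to be a full tripos whose base $\mC/X$ has weak dependent products and a generic proof, I would apply the reverse implication $(2)\Rightarrow(1)$ of \Cref{thm:main}, this time to $P_{/X}$, to conclude that $P_{/X}$ is $\exists$-supercompactifiable, as desired.

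The main obstacle, though a mild one, lies in the passage from a weak predicate classifier to weak power objects in the slice: \Cref{prop:fundamental thm triposes}(3) delivers only the former, so the argument genuinely relies on the weak cartesian closedness of $\mC/X$ extracted from its weak dependent products together with \Cref{rem:weak cart closed power ob iff pred clas}. Everything else is a direct bookkeeping chain through the cited results, requiring no new computation.
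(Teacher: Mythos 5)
Your proof is correct and follows essentially the same route as the paper, which likewise obtains the corollary by combining \Cref{prop:fundamental thm triposes} with \Cref{thm:main}, applied once to $P$ (to get weak dependent products and a generic proof in $\mC$, hence in $\mC/X$) and once to $P_{/X}$. Your explicit upgrade from the weak predicate classifier of $P_{/X}$ to weak power objects via \Cref{rem:weak cart closed power ob iff pred clas} merely fills in a step the paper leaves implicit, since its \Cref{ex:slice of a tripos is a tripos} already asserts that slicing a full tripos yields a full tripos.
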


\section{Abstracting localic sheafification}
Our previous analysis enables us to show that for a $\exists$-supercompactifiable tripos, its tripos-to-topos construction is equivalent to a category of $j$-sheaves for a Lawvere-Tierney topology that abstracts the sheafification process. In particular, we prove that this topology arises from a geometric embedding of triposes:

\begin{theorem}\label{thm:main 0}
Let $\doctrine{\mC}{P}$ be a $\exists$-supercompactifiable tripos. Then
\begin{enumerate}

\item there is a geometric embedding of triposes $P\hookrightarrow \compexfull{P}$;

\item there is a geometric embedding of toposes $\sT_P\hookrightarrow \sT_{\compexfull{P}}$.
\
\end{enumerate}
\end{theorem}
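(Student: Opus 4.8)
The plan is to obtain item (1) directly from the universal property of the full existential completion recorded in \Cref{cor:adjunction}, and then to transport it to toposes via \Cref{cor geometric morphism of triposes induce geometric morphisms}; the only genuine input beyond these is the hypothesis of $\exists$-supercompactifiability, which is what licenses speaking of a geometric embedding \emph{of triposes} at all.

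First I would observe that, being a full tripos, $P$ is in particular a full existential doctrine, so \Cref{cor:adjunction} applies verbatim: it furnishes the canonical inclusion $\freccia{P}{(\id_{\mC},\mathfrak{i})}{\compexfull{P}}$ together with a full existential morphism $\freccia{\compexfull{P}}{(\id_{\mC},\bar{\mathfrak{i}})}{P}$ forming an adjunction $(\id_{\mC},\bar{\mathfrak{i}})\dashv (\id_{\mC},\mathfrak{i})$ whose counit $(\id_{\mC},\bar{\mathfrak{i}})(\id_{\mC},\mathfrak{i})\cong \id_P$ is invertible. Thus $(\id_{\mC},\mathfrak{i})$ is a lex primary morphism admitting a left adjoint with invertible counit, which is exactly the shape required by the definition of geometric embedding.

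The step that upgrades this adjunction to a geometric embedding of triposes is to check that both endpoints are full triposes. This is where the hypothesis enters: by \Cref{thm:comp ex tripos is equivalent to}, $P$ being $\exists$-supercompactifiable is equivalent to $\compexfull{P}$ being a full tripos. Granting that, the definition of geometric embedding of triposes is met, establishing item (1). For item (2), I would simply feed this geometric embedding into \Cref{cor geometric morphism of triposes induce geometric morphisms}, which promotes any geometric embedding of triposes to a geometric embedding between the associated tripos-to-topos constructions, yielding $\sT_P\hookrightarrow \sT_{\compexfull{P}}$; here $\sT_P$ is a topos by \Cref{cor:tripos-to-tops of a tripos is topos}, and $\sT_{\compexfull{P}}\equiv\presh{P}$ is a topos again by \Cref{thm:comp ex tripos is equivalent to}.

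The main obstacle is conceptual rather than computational: everything hinges on $\compexfull{P}$ being a tripos, since a geometric morphism of triposes is only defined between full triposes. This is precisely the point at which $\exists$-supercompactifiability is indispensable, for without it \Cref{cor:adjunction} would still deliver an adjunction of full existential doctrines, but the target $\compexfull{P}$ need not be a tripos and neither item could even be phrased in the intended form. Once this is secured, both items follow immediately from the completion's universal property together with the functoriality of the tripos-to-topos construction.
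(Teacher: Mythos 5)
Your proposal is correct and follows essentially the same route as the paper, which likewise derives item (1) from \Cref{cor:adjunction} and item (2) from \Cref{cor geometric morphism of triposes induce geometric morphisms}. The only difference is that you make explicit, via \Cref{thm:comp ex tripos is equivalent to}, the point the paper leaves implicit—namely that $\exists$-supercompactifiability is what makes $\compexfull{P}$ a full tripos so that the adjunction qualifies as a geometric embedding \emph{of triposes}—which is a welcome clarification rather than a deviation.
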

\begin{proof}
The first point point follows by \Cref{cor:adjunction} and the second by \Cref{cor geometric morphism of triposes induce geometric morphisms}.
\end{proof}
Since geometric embeddings of toposes corresponds to  Lawvere-Tierney topologies, see e.g.  \cite[Cor.~7, Sec. VII]{SGL}, we have the following corollary:
\begin{cor}\label{cor_sheafification_Lw_Tr}
Let $\doctrine{\mC}{P}$ be a $\exists$-supercompactifiable tripos. Then there exists a Lawvere-Tierney topology $j$ on $\sT_{\compexfull{P}}$ such that $\sT_{P}\equiv \shtopology{\sT_{\compexfull{P}}}{j}$.
\end{cor}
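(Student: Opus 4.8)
The plan is to read the topology off directly from the geometric embedding of toposes already produced in \Cref{thm:main 0}, and then to invoke the classical correspondence between geometric inclusions and Lawvere--Tierney topologies. Almost nothing new needs to be computed: the content of the corollary is a reformulation of the embedding $\sT_P\hookrightarrow \sT_{\compexfull{P}}$ in the language of sheaves.

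First I would record that both categories in sight are genuine elementary toposes. Since $\doctrine{\mC}{P}$ is a tripos, $\sT_P$ is a topos by \Cref{cor:tripos-to-tops of a tripos is topos}. For the ambient category, the hypothesis that $P$ is $\exists$-supercompactifiable gives, via \Cref{thm:comp ex tripos is equivalent to}, that $\compexfull{P}$ is itself a full tripos; hence $\sT_{\compexfull{P}}$ is again a topos by \Cref{cor:tripos-to-tops of a tripos is topos}, and by \Cref{thm:characterisation tripostotopos full ex comp} it is equivalent to $\presh{P}=\exlex{\mG_P}$. This is the only place where $\exists$-supercompactifiability is genuinely used: without it $\sT_{\compexfull{P}}$ need not be a topos, and the Lawvere--Tierney machinery would not even be available.

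Next I would invoke \Cref{thm:main 0}(2), which supplies a geometric embedding of toposes $\sT_P\hookrightarrow \sT_{\compexfull{P}}$ whose direct image is full and faithful, i.e.\ a geometric inclusion in the usual sense. The final step is then purely formal: by the classical correspondence between geometric inclusions into an elementary topos $\mE$ and Lawvere--Tierney topologies on $\mE$ (see \cite[Cor.~7, Sec.~VII]{SGL}), any such inclusion exhibits the subtopos as the category of $j$-sheaves for a uniquely determined topology $j$ on the ambient topos. Applying this to $\sT_P\hookrightarrow \sT_{\compexfull{P}}$ yields a Lawvere--Tierney topology $j$ on $\sT_{\compexfull{P}}$ together with the desired equivalence $\sT_P\equiv \shtopology{\sT_{\compexfull{P}}}{j}$. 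There is no substantial obstacle here; the only points requiring care are verifying that $\sT_{\compexfull{P}}$ is an elementary topos (which is exactly what $\exists$-supercompactifiability buys us) and confirming that the embedding of \Cref{thm:main 0} is a genuine geometric inclusion, i.e.\ that the counit of the induced adjunction on toposes is an isomorphism, so that \cite[Sec.~VII]{SGL} applies verbatim.
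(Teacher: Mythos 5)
Your proposal is correct and follows essentially the same route as the paper: the paper also derives this corollary immediately from the geometric embedding $\sT_P\hookrightarrow \sT_{\compexfull{P}}$ of \Cref{thm:main 0} together with the classical correspondence between geometric embeddings and Lawvere--Tierney topologies from \cite[Cor.~7, Sec.~VII]{SGL}. Your additional care in verifying that $\sT_{\compexfull{P}}$ is an elementary topos (via \Cref{thm:comp ex tripos is equivalent to}) is a useful explicit check that the paper leaves implicit.
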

 Combining \Cref{ex: exact comp of weak sub 1} and \Cref{thm:weak suobject tripos} with \Cref{cor_sheafification_Lw_Tr} we have that every topos $\mathcal{B}$ arising as the $\mathsf{ex/lex}$-completion of a lex category $\mC$ fits within our framework:
\begin{cor}
   Let $\mathcal{B}\equiv \exlex{\mC}$ be a topos. Then $\mathcal{B}\equiv \shtopology{\sT_{\compexfull{\Psi_{\mC}}}}{j}$.
\end{cor}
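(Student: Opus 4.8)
The plan is to present $\mathcal{B}$ as the tripos-to-topos of the weak subobject doctrine $\Psi_{\mC}$ and then invoke the abstract sheafification result \Cref{cor_sheafification_Lw_Tr}. The entire argument consists in chaining together equivalences already in hand, so I anticipate no genuine obstacle; the only piece of real content is verifying that $\Psi_{\mC}$ satisfies the hypotheses of \Cref{cor_sheafification_Lw_Tr}, namely that it is an $\exists$-supercompactifiable full tripos.

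First I would exploit the standing hypothesis that $\mathcal{B}\equiv \exlex{\mC}$ is a topos. By Menni's characterization \Cref{cor: Menni chararcterisation}, this forces the base category $\mC$ to have weak dependent products and a generic proof. By \Cref{thm:weak suobject tripos} these are precisely the conditions under which the weak subobject doctrine $\doctrine{\mC}{\Psi_{\mC}}$ is a full tripos, so we conclude that $\Psi_{\mC}$ is a full tripos.

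Next I would promote this to $\exists$-supercompactifiability. Applying \Cref{thm:main} to the full tripos $\Psi_{\mC}$, condition (2) --- that $\mC$ has weak dependent products and a generic proof --- has already been established, so the equivalent condition (1) yields that $\Psi_{\mC}$ is $\exists$-supercompactifiable. Thus $\Psi_{\mC}$ is an $\exists$-supercompactifiable tripos, and \Cref{cor_sheafification_Lw_Tr} applies to it.

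Finally, applying \Cref{cor_sheafification_Lw_Tr} to $\Psi_{\mC}$ produces a Lawvere--Tierney topology $j$ on $\sT_{\compexfull{\Psi_{\mC}}}$ such that $\sT_{\Psi_{\mC}}\equiv \shtopology{\sT_{\compexfull{\Psi_{\mC}}}}{j}$. Combining this with the identification $\exlex{\mC}\equiv \sT_{\Psi_{\mC}}$ recorded in \Cref{ex: exact comp of weak sub 1}, I would conclude
\[\mathcal{B}\equiv \exlex{\mC}\equiv \sT_{\Psi_{\mC}}\equiv \shtopology{\sT_{\compexfull{\Psi_{\mC}}}}{j},\]
which is exactly the claimed equivalence.
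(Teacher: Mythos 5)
Your proof is correct and follows essentially the same route the paper intends: the paper derives this corollary by combining \Cref{ex: exact comp of weak sub 1} and \Cref{thm:weak suobject tripos} with \Cref{cor_sheafification_Lw_Tr}, and your argument simply makes explicit the underlying steps (Menni's characterization to get weak dependent products and a generic proof from the topos hypothesis, and \Cref{thm:main} to upgrade $\Psi_{\mC}$ to an $\exists$-supercompactifiable tripos). No gaps.
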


Combining \Cref{cor_sheafification_Lw_Tr} with \Cref{cor_every_topos_with_splity_epi_tripos_is_sheaf_tripos} we obtain:
\begin{cor}
Let $\doctrine{\mathcal{B}}{P}$ be full tripos, where $\mathcal{B}$ is a topos whose epis split. Then there exists a Lawvere-Tierney topology $j$ on $\sT_{\compexfull{P}}$ such that $\sT_{P}\equiv \shtopology{\sT_{\compexfull{P}}}{j}$.
\end{cor}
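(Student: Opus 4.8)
The plan is to obtain the statement as a direct composition of the two cited corollaries, chaining them through the notion of $\exists$-supercompactifiability; the entire mathematical content has already been isolated in those results, so the proof reduces to checking that their hypotheses line up.

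First I would invoke \Cref{cor_every_topos_with_splity_epi_tripos_is_sheaf_tripos}: since the base category $\mathcal{B}$ of the full tripos $\doctrine{\mathcal{B}}{P}$ is by assumption a topos in which every epimorphism splits, that corollary immediately yields that $P$ is $\exists$-supercompactifiable. No further structure needs to be verified at this stage, because ``topos with split epis'' is precisely the hypothesis feeding \Cref{cor_every_topos_with_splity_epi_tripos_is_sheaf_tripos}, which in turn rests on \Cref{thm:main} (weak dependent products and a generic proof are available in any topos whose epis split).

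Having established $\exists$-supercompactifiability of $P$, I would then apply \Cref{cor_sheafification_Lw_Tr} directly to the tripos $P$. That corollary produces a Lawvere--Tierney topology $j$ on $\sT_{\compexfull{P}}$ together with the equivalence $\sT_{P}\equiv \shtopology{\sT_{\compexfull{P}}}{j}$, which is exactly the desired conclusion. The passage from an $\exists$-supercompactifiable tripos to the topology $j$ is itself powered by the geometric embedding $\sT_P\hookrightarrow\sT_{\compexfull{P}}$ of \Cref{thm:main 0} and the standard correspondence between geometric embeddings of toposes and Lawvere--Tierney topologies.

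Because the argument is a two-step composition of already-proved statements, there is no genuine obstacle to overcome: the only point requiring attention is that the output of \Cref{cor_every_topos_with_splity_epi_tripos_is_sheaf_tripos} matches the input of \Cref{cor_sheafification_Lw_Tr} exactly via the intermediate property of $\exists$-supercompactifiability. All the substantive work, namely constructing the embedding $\sT_P\hookrightarrow\sT_{\compexfull{P}}$ and extracting the topology, is already encapsulated in \Cref{cor_sheafification_Lw_Tr}, so the corollary follows with no additional calculation.
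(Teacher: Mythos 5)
Your proposal is correct and takes exactly the same route as the paper: the corollary is stated there as an immediate combination of \Cref{cor_every_topos_with_splity_epi_tripos_is_sheaf_tripos} (split epis in the base topos give $\exists$-supercompactifiability, via \Cref{thm:main}) with \Cref{cor_sheafification_Lw_Tr} (an $\exists$-supercompactifiable tripos yields the Lawvere--Tierney topology $j$ and the equivalence $\sT_{P}\equiv \shtopology{\sT_{\compexfull{P}}}{j}$). The hypotheses chain through $\exists$-supercompactifiability precisely as you describe, so no additional argument is required.
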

In particular, by \Cref{cor_every_set_base_tripos_is_sheaf_tripos}, we have:

\begin{cor}\label{cor_sheafification_Lw_Tr_set}
Let $\doctrine{\set}{P}$ be full tripos. Then there exists a Lawvere-Tierney topology $j$ on $\sT_{\compexfull{P}}$ such that $\sT_{P}\equiv \shtopology{\sT_{\compexfull{P}}}{j}$.
\end{cor}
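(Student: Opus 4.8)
The plan is to obtain this statement as a direct specialization of \Cref{cor_sheafification_Lw_Tr}, so the only thing that requires checking is that the hypothesis of that corollary---namely that $P$ be $\exists$-supercompactifiable---is automatically satisfied in the $\set$-based case. First I would invoke \Cref{cor_every_set_base_tripos_is_sheaf_tripos}, which guarantees exactly this: every full tripos $\doctrine{\set}{P}$ with base category $\set$ (granting the axiom of choice in the meta-theory) is $\exists$-supercompactifiable. The essential point feeding into that corollary is that $\set$, under the axiom of choice, is a topos whose epimorphisms split, so that \Cref{cor_every_topos_with_splity_epi_tripos_is_sheaf_tripos} applies; equivalently, via the characterization in \Cref{thm:main}, the category $\set$ possesses weak dependent products and a generic proof.

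Once $\exists$-supercompactifiability of $P$ has been secured, the conclusion follows immediately by applying \Cref{cor_sheafification_Lw_Tr} to $P$: this produces a Lawvere-Tierney topology $j$ on $\sT_{\compexfull{P}}$ together with the desired equivalence $\sT_{P}\equiv \shtopology{\sT_{\compexfull{P}}}{j}$. There is no genuine obstacle in this last step, as it is a pure specialization of an already-proved result; the substantive content has been front-loaded into the characterization of $\exists$-supercompactifiable triposes (\Cref{thm:main}) and into the passage from a geometric embedding of triposes to a Lawvere-Tierney topology on the corresponding tripos-to-topos (\Cref{thm:main 0} together with the identification of geometric embeddings of toposes with such topologies, see \cite[Cor.~7, Sec. VII]{SGL}).

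The one conceptual subtlety worth flagging, rather than a technical difficulty, is the essential though implicit use of the axiom of choice: it is precisely what upgrades $\set$ to a topos with splitting epis and thereby furnishes the generic proof and weak dependent products that make every $\set$-based full tripos $\exists$-supercompactifiable, recovering in this abstract framework the class of triposes as originally introduced in \cite{TT}.
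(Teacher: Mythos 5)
Your proposal is correct and matches the paper's own derivation exactly: the paper obtains this corollary by specializing \Cref{cor_sheafification_Lw_Tr} via \Cref{cor_every_topos_with_splity_epi_tripos_is_sheaf_tripos} and \Cref{cor_every_set_base_tripos_is_sheaf_tripos}, with $\exists$-supercompactifiability of $\set$-based triposes resting on the axiom of choice through \Cref{thm:main}, just as you argue. Your remark on the implicit but essential role of choice in making epimorphisms in $\set$ split is likewise consistent with the paper's explicit parenthetical ``(with the axiom of choice)''.
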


\begin{example}[Localic toposes]
Let $\doctrine{\set}{\loc^{(-)}}$ be localic tripos, as defined in \Cref{ex:localic tripos}. By \Cref{thm:main}, the doctrine $\doctrine{\set}{\compexfull{\loc}}$ is a full tripos and, by \Cref{thm:main 0}, we have an adjunction between the toposes 
\[\begin{tikzcd}[column sep=normal, row sep=normal]
	\mathsf{PSh}(\loc) && \sh{\loc}
	\arrow[hook',""{name=0, anchor=center, inner sep=0}, "{\sT(\id_{\set},\mathfrak{i})}",curve={height=-18pt}, from=1-3, to=1-1]
	\arrow[""{name=1, anchor=center, inner sep=0},"{\sT(\id_{\set},\bar{\mathfrak{i}})}", curve={height=-18pt}, from=1-1, to=1-3]
	\arrow["\scalebox{1.3}{$\dashv$}"{anchor=center, rotate=-90}, draw=none, from=1, to=0]
\end{tikzcd}\]
that is the localic sheafification.
\end{example}

\begin{example}[Realizability toposes]
Let $\pca{A}$ be a pca, and let us consider the realizability tripos  $\doctrine{\set}{\mathcal{P}}$. Then the category $ \presh{\mathcal{P}}$ is a topos, and the inclusion $\mathsf{RT}(\pca{A}) \hookrightarrow \presh{\mathcal{P}}$ has a left adjoint. In particular, by \Cref{thm:main 0}, we have an adjunction
\[\begin{tikzcd}[column sep=normal, row sep=normal]
	 \presh{\mathcal{P}} && \mathsf{RT}(\pca{A}).
	\arrow[hook',""{name=0, anchor=center, inner sep=0}, "{\sT(\id_{\set},\mathfrak{i})}",curve={height=-18pt}, from=1-3, to=1-1]
	\arrow[""{name=1, anchor=center, inner sep=0},"{\sT(\id_{\set},\bar{\mathfrak{i}})}", curve={height=-18pt}, from=1-1, to=1-3]
	\arrow["\scalebox{1.3}{$\dashv$}"{anchor=center, rotate=-90}, draw=none, from=1, to=0]
\end{tikzcd}\]
Hence, we have that every realizability topos can be presented as a topos of $j$-sheaves $\mathsf{RT}(\pca{A})\equiv \shtopology{ \presh{\mathcal{P}}}{j}$ for with respect to a topos and a topology abstracting the topos of localic presheaves and the localic sheafification.
\end{example}

\begin{example}
Since all these toposes are generated via the tripos-to-topos of a $\set$-based tripos, we have that the modified realizability topos \cite{Hylandmodifiedreal,VANOOSTENmodifiedreal}, the extensional realizability topos \cite{VANOOSTENereal}, dialectica topos \cite{Biering2008}, Krivine topos \cite{STREICHER_2013} can be all presented as toposes of $j$-sheaves with respect to the topos of abstract presheaves generated by their generating triposes.
\end{example}
\begin{example}
    By \Cref{ex_weih_tripos_is_supercomp}, we have that the extended Weihrauch topos fits within our framework, hence it can be presented as a topos of $j$-sheaves with respect to the topos of abstract presheaves generated by its generating tripos.
\end{example}

\section{Conclusion}
We have shown that localic and realizability toposes share deeper structural features beyond being instances of the tripos-to-topos construction  \cite{TT,TTT}: they are  both instances of 
a tripos-to-topos $\sT_P$ generated from 
a $\exists$-supercompactificable tripos $P$. This means that they both can be presented as toposes of $j$-sheaves on an abstract localic
presheaf category $\presh{P}=\exlex{\mG_P}$  which  enjoys a generalization of the localic instance of the Comparison Lemma, since their $\presh{P}$ coincides with the topos $\sT_{\compexfull{P}}$
generated by their full existential completion $\compexfull{P}$. Diagrammatically, we have the geometric embeddings of toposes
\[\begin{tikzcd}
	\mathsf{PSh}(\loc) && \sh{\loc} && \presh{\mathcal{P}} && \mathsf{RT}(\pca{A})
	\arrow[""{name=0, anchor=center, inner sep=0}, curve={height=-18pt}, from=1-1, to=1-3]
	\arrow[""{name=1, anchor=center, inner sep=0}, curve={height=-18pt}, hook', from=1-3, to=1-1]
	\arrow[""{name=2, anchor=center, inner sep=0}, curve={height=-18pt}, from=1-5, to=1-7]
	\arrow[""{name=3, anchor=center, inner sep=0}, curve={height=-18pt}, hook', from=1-7, to=1-5]
	\arrow["\dashv"{anchor=center, rotate=-90}, draw=none, from=0, to=1]
	\arrow["\dashv"{anchor=center, rotate=-90}, draw=none, from=2, to=3]
\end{tikzcd}\]
where $ \mathsf{PSh}(\loc)=\exlex{\mG_{\loc^{(-)}}}\equiv \sT_{\compexfull{\loc}}$ and $ \presh{\mathcal{P}}=\exlex{\mG_{\mathcal{P}}}\equiv \sT_{\compexfull{\mathcal{P}}}$, and more generally 
\[\begin{tikzcd}
	\sT_{\compexfull{P}} && \sT_P
	\arrow[""{name=0, anchor=center, inner sep=0}, curve={height=-18pt}, from=1-1, to=1-3]
	\arrow[""{name=1, anchor=center, inner sep=0}, curve={height=-18pt}, hook', from=1-3, to=1-1]
	\arrow["\dashv"{anchor=center, rotate=-90}, draw=none, from=0, to=1]
\end{tikzcd}\]
where $\presh{P} =\exlex{\mG_P}\equiv \sT_{\compexfull{P}}$.

Regarding related works, we  remark  that in the context of  Grothendieck toposes,
a notion of \emph{supercompact objects} and \emph{supercompactly generated topos} was introduced in \cite{rogers2021,caramello2018}, inspired by \cite{moerdijk2000} but with a crucial use of 
arbitrary coproducts to investigate their properties and applications. In particular, for these notions
we are not aware of  a corresponding
notion of supercompactification,  and, hence, of a generalization of  the localic instance of the 
Comparison Lemma.

In the future, we aim to extend our results  to the 
context of fibrations of toposes,  
encompassing predicative localic and realizability toposes like that in \cite{MAIETTI_MASCHIO_2021}, where the reference to theory of Lawvere doctrines and 
Lawvere-Tierney sheaves are compulsory.

\section*{Acknowledgments}
We acknowledge very useful discussions with Jacopo Emmenegger, Martin Hyland, Samuele Maschio, Fabio Pasquali, Giuseppe Rosolini on related topics to this paper. 
The first author is affiliated with the INdAM national research group GNSAGA.

\bibliographystyle{elsarticle-harv}
\bibliography{biblio_davide}

\end{document}